\chardef\bslash=`\\ % p. 424, TeXbook
\theoremstyle{plain} %% This is the default
   \newtheorem{theorem}{Theorem}[section]
   \newtheorem*{theorem*}{Theorem}
   \newtheorem{corollary}[theorem]{Corollary}
   \newtheorem*{corollary*}{Corollary}
   \newtheorem{lemma}[theorem]{Lemma}
   \newtheorem*{lemma*}{Lemma}
   \newtheorem{proposition}[theorem]{Proposition}
   \newtheorem*{proposition*}{Proposition}
\theoremstyle{definition}
   \newtheorem*{definition*}{Definition}
\theoremstyle{remark}
   \newtheorem{example}[theorem]{Example}
   \newtheorem*{example*}{Example}
   \newtheorem{remark}[theorem]{Remark}
   \newtheorem*{remark*}{Remark}
\newcommand{\Z}{\mathbb{Z}}
\newcommand{\R}{\mathbb{R}}
\newcommand{\C}{\mathbb{C}}
\DeclareMathOperator{\id}{id}
\DeclareMathOperator{\rank}{rank}
\DeclareMathOperator{\cd}{cd}
\renewcommand{\rank}{\mathrm{rk}}
\newcommand{\quotient}[2]{\raisebox{3pt}{\ensuremath{#1}}\!\Big/\raisebox{-3pt}{\ensuremath{#2}}}
\newcommand{\eps}{\varepsilon}
\renewcommand*{\bar}[1]{\smash{\overline{#1}}}
\renewcommand*{\tilde}[1]{\smash{\widetilde{#1}}}
\newcommand{\et}{\text{et}}
\newcommand{\nondeg}{\ensuremath{\text{nd}}}
\newcommand{\topl}{\text{top}}
\DeclareMathOperator{\im}{im}
\DeclareMathOperator{\coker}{coker}
\newcommand{\cc}[1]{#1_\bullet}
\DeclareMathOperator{\Thom}{Thom}
\DeclareMathOperator{\point}{point}
\newcommand{\sheaf}[1]{\mathcal{#1}}
\DeclareMathOperator{\CH}{CH}
\DeclareMathOperator{\Pic}{Pic}
\DeclareMathOperator{\Jac}{Jac}
\newcommand{\A}{\mathbb{A}}
\renewcommand{\P}{\mathbb{P}}
\newcommand{\OO}{\mathcal{O}}
\newcommand*{\longtwoheadrightarrow}{\ensuremath{\relbar\joinrel\twoheadrightarrow}}
\newcommand{\mm}[1]{\left(\begin{smallmatrix}#1\end{smallmatrix}\right)}
\newenvironment{smalldiagram}{\begin{equation*}\SelectTips{cm}{10}}{\end{equation*}}
\newcommand{\dtriangle}[6]{
  \xymatrix@R=9pt{
  {#1}\ar[rr]^-{#2} && {#3} \ar[dl]^{#4}\\
  & {#5}\ar@{..>}[ul]^{#6}
  }
  }
\newcolumntype{M}{>{$}c<{$}}
\newcommand{\ie}{\mbox{i.\thinspace{}e.\ }}
\newcommand{\eg}{\mbox{e.\thinspace{}g.\ }}
\newcommand{\cf}{\mbox{c.\thinspace{}f.\ }}
\DeclareMathOperator{\graded}{gr}
\newcommand{\gr}{\graded}
\DeclareMathOperator{\Sq}{Sq}
\DeclareMathOperator{\Kgroup}{K}
        \newcommand{\K}{\ensuremath{\Kgroup}}
\DeclareMathOperator{\KOgroup}{KO}
        \newcommand{\KO}{\ensuremath{\KOgroup}}
        \newcommand{\rK}{\ensuremath{\widetilde{\K}{}} }
        \newcommand{\Kh}{\ensuremath{\KOgroup}}
        \newcommand{\rKO}{\ensuremath{\widetilde{\KO}{}} }
        \newcommand{\KOK}[3][]{\dfrac{{\KO_{#1}^{#2}}{#3}}{{\K_{#1}}{#3}}}
        \newcommand{\tKOK}[3][]{(\KO^{#2}_{#1}\!\!/\K)_{#1}{#3}}
        \newcommand{\trKOK}[2]{(\rKO^{#1}\!/{\rK}){#2}}
    \DeclareMathOperator{\Wgroup}{W}
        \newcommand{\W}{\Wgroup}
        \newcommand{\rW}{\ensuremath{\widetilde{\W}{}} }
        \DeclareMathOperator{\GWgroup}{GW}
        \newcommand{\GW}{\GWgroup}
        \newcommand{\rGW}{\ensuremath{\widetilde{\GW}{}} }
        \newcommand{\w}{\mathit{w}}
        \newcommand{\gw}{\mathit{gw}}
\newcommand{\lb}[1]{{\mathcal{#1}}}
\newcommand{\vb}[1]{{\mathcal{#1}}}
\newcommand{\dual}{\vee}
\long\def\symbolfootnote[#1]#2{\begingroup%
\def\thefootnote{\fnsymbol{footnote}}\footnote[#1]{#2}\endgroup}
\newcommand*{\sectionair}{0.5\textheight}
\newcommand*{\subsectionair}{0.2\textheight}
\newcommand*{\encouragepagebreak}[1]{\needspace{#1}}
\newcommand{\shortendisplayskip}{\vspace{-.5\baselineskip}}
\newcommand{\GWorW}{{(Grothendieck\discretionary{-)}{}{-)}Witt} }
\newcommand{\GrothendieckWitt}{Grothen\-dieck-Witt }
\begin{document}
\title{Witt Groups of Curves and Surfaces}
\author{Marcus Zibrowius%
 \thanks{Bergische Universit^^e4t Wuppertal,
  Gau{\ss}stra{\ss}e 20,
  42119 Wuppertal,
  Germany}
}
\date{}
\maketitle
\begin{abstract}

We study Witt groups of smooth curves and surfaces over algebraically closed fields of characteristic not two. In both dimensions, we determine both the classical Witt group and Balmer's shifted Witt groups. In the case of curves, the results are supplemented with a complete description of the (shifted) \GrothendieckWitt groups.

In a second step, we analyse the relationship of Witt groups of smooth complex curves and surfaces with their real topological K-groups. They turn out to be surprisingly close:  for all curves and for all projective surfaces of geometric genus zero, the Witt groups may be identified with the quotients of their even KO-groups by the images of their complex topological K-groups under realification.
\end{abstract}
\begin{quote}
  \tableofcontents
\end{quote}
\thispagestyle{empty}
\enlargethispage{2cm}
\bigskip
\setlength{\parindent}{0pt}
\addtolength{\parskip}{3pt}
\addtolength{\topsep}{3pt}

\encouragepagebreak{\sectionair}
\section*{Introduction}
The study of Witt groups of varieties was initiated by Knebusch in the 1970's \cite{Knebusch:varieties}. Despite Arason's early success with the calculation of the Witt groups of projective spaces over arbitrary fields (of characteristic not two) \cite{Arason}, concrete computations remain challenging to this day, and known results generally depend on the chosen ground field. In dimensions one and two, such results include explicit descriptions of the Witt groups of smooth curves and surfaces over the complex numbers due to Fern{\'a}ndez-Carmena \cite{Fernandez} and over the reals due to Knebusch and Sujatha \citelist{\cite{Knebusch:curves}\cite{Sujatha:RPS}}, as well as structural results over finite and local fields \citelist{\cite{Parimala:curves-local}\cite{Auel:Clifford-invariant}}.

Since Knebusch's original definition, the theoretical framework surrounding Witt groups has undergone considerable developments. In particular, by the work of Balmer on Witt groups of triangulated categories, we now know that the classical Witt group \(\W(X)=\W^0(X)\) of a variety naturally fits into a four-periodic family of ``shifted'' Witt groups \(\W^i(X)\) \citelist{\cite{Balmer:TWGI}\cite{Balmer:TWGII}}. Moreover, it has become clear that these groups in turn fit into the larger framework of Karoubi's hermitian algebraic K-theory and should be interpreted as hermitian K-groups of negative degrees \cite{Schlichting:GWnotes}. Under this interpretation, the hermitian K-groups of degree zero correspond to Balmer and Walter's \GrothendieckWitt groups \(\GW^i(X)\) \cite{Walter:TGW}. Generally speaking, the focus of attention seems to be shifting gradually from Witt to \GrothendieckWitt groups; see for example the recent papers \cite{Fasel:IntersectionFormula}, \cite{FaselSrinivas:Chow-Witt}, \cite{Schlichting:MayerVietoris} or \cite{Levine:Slice-and-GW}.

The purposes of this paper are twofold. Firstly, we extend Fern{\'a}ndez-Carmena's computations of the classical Witt groups of a smooth complex curve or surface \(X\) in several directions:
\begin{itemize}
\item We compute all of Balmer's shifted Witt groups.
\item We relax the condition on the ground field \(k\)---we require only \(\mathrm{char}(k) \neq 2\) and \mbox{\(\cd_2(k(X)) \leq 2\)}.  This condition is satisfied for example by curves over finite fields (\cf Remark~\ref{rem:cd-assumptions}).
\item In the case of curves over an algebraically closed field of characteristic not two, we include complete computations of the \GrothendieckWitt groups:
\end{itemize}
\begin{theorem}
Let \( C \) be a smooth curve over an algebraically closed field of characteristic not two. Let \(H^i_{\et}(C;\Z/2)\) denote its ^^e9tale cohomology groups with \(\Z/2\)-coefficients, and let \(\Pic(C)\) denote its Picard group. The \GrothendieckWitt and Witt groups of \( C \) are as follows:
\begin{align*}
   &\GW^0(C) = \Z \oplus H^1_{\et}(C;\Z/2)\oplus H^2_{\et}(C;\Z/2)  && \W^0(C) = \Z/2 \oplus H^1_{\et}(C;\Z/2)  \\
   &\GW^1(C) = \Pic(C)                                           && \W^1(C) = H^2_{\et}(C;\Z/2)                    \\
   &\GW^2(C) = \Z                                                && \W^2(C) = 0                                \\
   &\GW^3(C) = \Z/2 \oplus \Pic(C)                               && \W^3(C) = 0
\end{align*}
\end{theorem}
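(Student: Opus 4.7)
The plan is to compute the Witt groups first via Balmer's Gersten--Witt complex, and then lift the results to the Grothendieck--Witt groups via Schlichting's Karoubi-type exact sequence linking $\K$, $\GW$ and $\W$.

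For the Witt groups, on a smooth one-dimensional scheme the Gersten--Witt complex reduces to the two-term complex
\[
\W(k(C)) \xrightarrow{\partial} \bigoplus_{x \in C^{(1)}} \W(k(x)),
\]
whose cohomology computes $\W^0(C)$ and $\W^1(C)$, while $\W^2(C) = \W^3(C) = 0$ holds for dimensional reasons. Since $k$ is algebraically closed, each $k(x) = k$ and $\W(k(x)) = \Z/2$. The function field $k(C)$ is $C_1$ by Tsen's theorem, hence $I^3(k(C)) = 0$, and Milnor's conjecture in low weight identifies $I^n/I^{n+1} \cong H^n_\et(k(C);\Z/2)$ for $n \leq 2$. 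Comparing the Gersten--Witt complex term by term with the Bloch--Ogus / Gersten resolution of the \'etale sheaves $\mathcal{H}^n(\Z/2)$ identifies its cohomology with $H^*_\et(C;\Z/2)$ as claimed; the extension $0 \to H^1_\et(C;\Z/2) \to \W^0(C) \to \Z/2 \to 0$ splits via the rank-mod-$2$ homomorphism.

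For the Grothendieck--Witt groups, I apply Schlichting's Karoubi-type exact sequence, whose relevant tail reads
\[
\K_0(C) \xrightarrow{h^i} \GW^i(C) \to \W^i(C) \to \K_{-1}(C) = 0,
\]
the last vanishing because smooth curves have trivial negative K-theory. Thus $\GW^i(C)$ is an extension of $\W^i(C)$ by a quotient of $\K_0(C) = \Z \oplus \Pic(C)$, with the kernel of $h^i$ controlled by the forgetful map from the preceding $\GW^{i-1}_1$. For $i = 2$, any hyperbolic form on a line bundle is isomorphic to the trivial hyperbolic plane, so $h^2$ factors through the rank and the sequence collapses to $\GW^2(C) = \Z$. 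For $i = 0$, the image of $h^0$ is $2\Z \oplus 2\Pic(C)$ and classical rank and determinant invariants split the resulting extension against $\W^0(C)$ to yield the stated decomposition. The cases $i = 1, 3$ are handled analogously, with the $\Pic(C)$-summand in each arising from the kernel of $h^i$ rather than from $\W^i$.

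The principal obstacle is Stage 1: identifying the cohomology of the Gersten--Witt complex with \'etale cohomology requires Milnor's conjecture in low weight and a careful comparison between Gersten-type complexes. Once the Witt groups are established, the Grothendieck--Witt computation is largely bookkeeping; the only genuine subtlety is the splitting of the Karoubi-sequence extensions, which follows from the abundance of classical invariants (rank, determinant) available on forms of low rank.
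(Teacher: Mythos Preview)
Your computation of the Witt groups is essentially the paper's argument specialised to dimension one: the Gersten--Witt complex together with the identification $I^n/I^{n+1}\cong\sheaf H^n$ is exactly Pardon's spectral sequence, and the paper's Theorem~\ref{thm:W_Surface} proceeds in the same way. This part is fine.

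The \GrothendieckWitt step, however, has a real gap. The Karoubi sequence only gives you an extension
\[
0 \longrightarrow \K_0(C)/\im(F) \longrightarrow \GW^i(C) \longrightarrow \W^i(C) \longrightarrow 0,
\]
and for $i=1$ on a projective curve this reads $0\to\Z\oplus\Jac(C)\to\rGW^1(C)\to\Z/2\to 0$. To obtain $\GW^1(C)=\Pic(C)$ rather than $\Pic(C)\oplus\Z/2$ you must show this extension does \emph{not} split, and this is precisely where the paper invests effort: it constructs an explicit $1$-symmetric complex $\Psi_g$ (see Remark~\ref{rem:generatorW1C}), checks $F(\Psi_g)=(1,\OO)$ in $\rK_0(C)$, and then uses Lemma~\ref{lem:HF} ($H^1F(\Psi_g)=2\Psi_g$) to conclude that $\Psi_g$ has infinite order mapping to the generator of $\W^1(C)$. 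Your ``handled analogously'' skips this entirely; rank and determinant are no help here because $\GW^1$ has no classical invariants of that kind. The paper's computation of $\GW^2(C)=\Z$ also rests on this: one needs $F\colon\rGW^1(C)\to\rK_0(C)$ to be surjective, which again uses $\Psi_g$. Your alternative claim that every symplectic hyperbolic $H^2(\lb L)$ is isometric to $H^2(\OO)$ is false already on $\P^1$, where $\OO(1)\oplus\OO(-1)\not\cong\OO^2$ as bundles.

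A secondary issue: the statement ``the image of $h^0$ is $2\Z\oplus 2\Pic(C)$'' is unclear as written (this is not visibly a subgroup of $\GW^0(C)$), and to compute $\ker(h^0)=\im(F\colon\GW^3\to\K_0)$ you first need $\GW^3$, which you have not yet determined. The paper breaks this circularity by observing that $\W^3=0$ forces $\im(F\colon\rGW^3\to\rK_0)=\im(FH^3)$, which can be computed directly on $\K_0$; you should make this step explicit.
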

This result is given in Theorem~\ref{thm:W_Curve} below, which also includes a description of the \GrothendieckWitt and Witt groups twisted by a non-trivial line bundle over \(C\). A similar description of the shifted Witt groups of smooth surfaces is presented in Theorem~\ref{thm:W_Surface}.
\thispagestyle{empty}
\enlargethispage{1cm}

Secondly, we analyse the relationship of the \GrothendieckWitt and Witt groups of a smooth complex curve or surface \(X\) with the real topological K-groups of its complex points \(X(\C)\) equipped with the analytic topology. Here, we use the comparison homomorphisms
\begin{alignat*}{3}
   \gw^i\colon{}&& \GW^i(X)&\rightarrow \KO^{2i}(X(\C))\\
   \w^i\colon{}&&  \W^i(X)&\rightarrow \KOK{2i}{(X(\C))}
\end{alignat*}
introduced in \cite{Me:WCCV} under certain mild hypotheses concerning the representability of hermitian K-theory (\cite{Me:WCCV}*{Standing Assumptions 1.9}). As shown there, these maps are isomorphisms whenever $X$ is cellular. There is, of course, no reason for this result to hold in much greater generality. Nonetheless, the following comparison result is achieved in Theorems~\ref{thm:comparison_Curves} and \ref{thm:comparison_Surfaces}:

\begin{theorem}
When \(X\) is a smooth complex curve, the comparison maps \( \gw^i \) are surjective and the maps \( \w^i \) are isomorphisms. When \(X\) is a smooth complex surface, the same claim holds if and only if every continuous complex line bundle over \( X \) is algebraic, \ie if and only if the natural map \( \Pic(X)\rightarrow H^2(X;\Z) \) is surjective. (For projective surfaces, this condition is equivalent to the condition that the geometric genus \( h^{2,0} \) of \( X \) be zero.)
\end{theorem}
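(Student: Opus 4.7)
The approach is to combine the explicit algebraic computations of $\GW^i$ and $\W^i$ (Theorem~\ref{thm:W_Curve} for curves, Theorem~\ref{thm:W_Surface} for surfaces) with explicit computations of $\KO^{2i}(X(\C))$ via the Atiyah--Hirzebruch spectral sequence, and then to verify that the comparison maps $\gw^i$ and $\w^i$ of \cite{Me:WCCV} identify matching summands on both sides. The two computations are close enough to each other that, at least in the curve case, this matching is essentially forced.

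For the curve case, the Atiyah--Hirzebruch spectral sequence $H^p(C(\C);\KO^q(\mathrm{pt}))\Rightarrow\KO^{p+q}(C(\C))$ collapses at $E_3$ because $C(\C)$ has real cohomological dimension at most two, so the only possibly nontrivial differentials (of $\Sq^2$-type) land in groups that vanish. The resulting formulas for $\KO^{2i}(C(\C))$ involve copies of $H^*(C(\C);\Z/2)$, the integral group $H^2(C(\C);\Z)$ in one slot, and the rank summand $H^0=\Z$ in another. Using Artin's comparison $H^i_{\et}(C;\Z/2)\cong H^i(C(\C);\Z/2)$ and the fact that on line bundles $\gw^1$ is given by the topological first Chern class $\Pic(C)\to H^2(C(\C);\Z)$ (which for a curve is the degree map, surjective onto $\Z$), we verify summand-by-summand that $\gw^i$ is surjective. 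Passing to $\W^i$ on the source side and to $\KO^{2i}/\K$ on the target side cancels exactly the summands coming from realification out of complex K-theory --- the rank summand and the Chern summand --- yielding the claimed isomorphism.

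For the surface case, the same strategy applies, but now $\dim_{\R} X(\C)=4$ and the integral summand $H^2(X(\C);\Z)$ in $\KO^2(X(\C))$ survives the quotient by $\K$. The map $\gw^1\colon\Pic(X)\to\KO^2(X(\C))$ still has a first-Chern-class component $\Pic(X)\to H^2(X(\C);\Z)$, so surjectivity of $\gw^1$ onto $\KO^2$ reduces to surjectivity of this Chern class map. All other summands of $\KO^{2i}(X(\C))$ are copies of $H^*(X(\C);\Z/2)=H^*_{\et}(X;\Z/2)$ or the rank summand, and match the corresponding summands of $\GW^i(X)$ as in the curve case. Hence the comparison maps are as claimed if and only if $\Pic(X)\to H^2(X(\C);\Z)$ is surjective, i.e.\ if and only if every continuous complex line bundle is algebraic. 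For projective $X$, the image of this map is the N\'eron--Severi group $\mathrm{NS}(X)=H^{1,1}(X)\cap H^2(X(\C);\Z)$; by Hodge theory, $\mathrm{NS}(X)=H^2(X(\C);\Z)$ iff $H^{2,0}(X)=0$, i.e.\ $h^{2,0}(X)=0$.

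The main technical obstacle is pinning down the behaviour of $\gw^1$ on $\Pic$ as the topological first Chern class: this is the pivot on which the iff statement for surfaces rests. Once this identification is established --- it is essentially built into the construction of $\gw$ in \cite{Me:WCCV} via the compatibility of algebraic and topological line bundles --- the rest is a bookkeeping exercise matching summands. A secondary worry is handling extension problems in the Atiyah--Hirzebruch spectral sequence for $\KO$ on the real 4-manifold $X(\C)$, but orientability and the $\Z/2$-torsion structure of the relevant groups should suffice to split the extensions that occur.
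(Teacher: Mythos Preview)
Your proposal has a genuine gap for the shifted comparison maps ($i\neq 0$). For $i=0$, the summand-by-summand matching you describe is exactly what the paper does in Proposition~\ref{prop:comparison_0}: both filtrations are controlled by Stiefel-Whitney classes, and the compatibility of \'etale and topological $w_1,w_2$ pins down $\w^0$ and $\gw^0$ on each graded piece. But for $i=1,2,3$ there is no analogous characteristic-class description of $\w^i$ or $\gw^i$. You assert that ``$\gw^1\colon\Pic(X)\to\KO^2(X(\C))$ has a first-Chern-class component'' for a surface $X$, but $\GW^1(X)$ is not known to equal $\Pic(X)$ for surfaces---that identification (Theorem~\ref{thm:W_Curve}) is only for curves, and even there it is deduced \emph{a posteriori} from the Witt computation via the Karoubi sequence. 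Likewise, knowing $\W^1(X)\cong\ker(S^1)\oplus H^3_{\et}(X;\Z/2)$ and $\tKOK{2}{(X)}\cong\ker(\Sq^2_{\Z})\oplus H^3(X;\Z/2)$ abstractly does not tell you that $\w^1$ respects these splittings. Calling this ``bookkeeping'' understates the difficulty; it is precisely the step the paper works hardest to avoid.

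The paper's route is indirect: rather than identify the shifted maps on summands, it proves only half of each bijection directly (surjectivity of $\gw^0,\gw^2$, injectivity of $\w^1,\w^3$) by a geometric reduction. One removes from $X$ a sequence of smooth curves generating $\Pic(X)/2$ to reach an affine open $U$ with $\Pic(U)/2=0$; there the shifted Witt groups and the relevant $\rKO$-groups vanish, so the claims are vacuous. The curves are then added back via the localization ladder~\eqref{seq:localizationCp}, using the curve case (itself reduced to points by the same trick). Only after this does one invoke the abstract equality of orders---$S^1$ and $\Sq^2_{\Z}$ coincide when $\Pic(X)\twoheadrightarrow H^2(X;\Z)$---to upgrade ``injective or surjective between finite groups of equal size'' to ``isomorphism''. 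Your explicit-matching strategy could perhaps be made to work, but it would require an independent argument that $\w^i$ is compatible with the Pardon and Atiyah--Hirzebruch filtrations, which you have not supplied.
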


Our computations follow a route first described by Totaro \cite{Totaro:Witt}, which is based on a series of spectral sequences developed over the past years by Balmer, Walter and Pardon. Although Fern{\'a}ndez-Carmena managed to obtain \(\W^0(X)\) without explicit use of this machinery, the approach described here seems significantly more transparent even for this classical case.  The only place where the assumption on \(\cd_2(k(X))\) enters into the computation is the evaluation of the Bloch-Ogus spectral sequence for ^^e9tale cohomology.  Thus, it seems plausible that similar techniques can be used to compute Witt groups over more general ground fields. The \GrothendieckWitt groups of curves are obtained from their Witt groups via Karoubi's fundamental exact sequence.

Our proof of the comparison result relies on the aforementioned explicit calculations. Firstly, it is easy to see that the groups $\W^i(X)$ and $\KO^{2i}(X)/\K^0(X)$ of smooth complex varieties of dimension at most two are abstractly isomorphic in the cases claimed. For $i=0$, we can deduce that the isomorphism is induced by the map $w^0$ from the fact that all elements in $\W^0(X)$ are detected by the first two Stiefel-Whitney classes. The shifted groups $\W^1(X)$ and $\W^2(X)$ require more work. To obtain the assertions concerning the comparison maps, we decompose an arbitrary surface into a union of curves and an affine piece whose Picard group vanishes modulo two. This is the only step into which the mentioned hypotheses concerning hermitian K-theory enter (\cf Remark~\ref{rem:disclaimer}).

The structure of this article is as follows. In the first section, we review some of the basic aspects of Witt groups and real topological K-groups. We also include a brief account of the theories of Stiefel-Whitney classes in both settings and explain in how far they coincide for complex varieties. Section~\ref{sec:CS:Curves} deals with the \GrothendieckWitt groups of smooth curves. The main calculations of Witt groups and of the groups \( \tKOK{2i}{(X)} \) are presented in Section~\ref{sec:CS:Surfaces}, while the comparison result is derived in Section~\ref{sec:CS:comparison}. Finally, we briefly analyse how this result is related to the Quillen-Lichtenbaum conjecture and its analogue for hermitian K-theory, which has recently appeared in \cite{BKOS:QuillenLichtenbaum}.

\subsubsection*{Notation and conventions}
By a variety, we mean an integral separated scheme of finite type over a field $k$.
In particular, all our varieties are irreducible.
Curves and surfaces are smooth of dimensions one and two over $k$, respectively.
We always assume that the ground field \( k \) is of characteristic not two.

When \(X\) is a variety over \(\C\), we write \(\KO^*(X)\) and \(\K^*(X)\) for the real and complex topological K-groups of the set of complex points \(X(\C)\) equipped with the analytic topology.
The quotients \mbox{\(\KO^{2i}(X)/\K^0(X)\)} are often denoted as \(\tKOK{2i}{(X)}\), the quotients \mbox{\(\KO^{2i}(X;\Z/2)/\K^0(X;\Z/2)\)} as \(\tKOK{2i}{(X;\Z/2)}\), and similarly for related quotients.

A tilde on a K-, Witt- or Grothendieck-Witt group of a variety \(X\) indicates that we are considering the
corresponding reduced group, \ie the kernel of the pullback along a geometric point \(\bar x\to X\).
In particular, \(\rK_0(X)\), \(\rGW^0(X)\) and \(\rW^0(X)\) denote the kernels of the rank homomorphisms
(\cf Example~\ref{eg:W-of-geometric-point} below). A tilde on any of the groups \(\K^i(X)\), \(\KO^i(X)\) or \(\tKOK{2i}{(X)}\) of a path-connected  topological space \(X\) likewise denotes the kernel of the pullback along the inclusion of a point.

\encouragepagebreak{\sectionair}
\section{Background on Witt groups}
\subsection{Symmetric bundles and complexes}
\paragraph{Symmetric bundles.}
A symmetric bundle \( (\vb{E},\eps)\) over a variety \( X\) is a vector bundle\footnote{%
By convention, we identify a vector bundle with its sheaf of sections. Thus, the common notation \( \OO \) for the sheaf of regular functions on \( X\) will be used to denote the trivial line bundle over \( X \).
}
\( \vb{E}\) over \( X\) equipped with a non-degenerate symmetric bilinear form
\(
\eps\colon{\vb{E}\otimes\vb{E}\rightarrow\lb{O}}
\).
Alternatively, we may view \( \eps\) as an isomorphism from \( \vb{E}\) to its dual bundle \( \vb{E}^{\dual}\). In this case, the symmetry of \( \eps\) is
encoded by the fact that it agrees with its dual \( \eps^{\dual}\)
under the canonical identification of \( \vb{E}\) with \(
(\vb{E}^{\dual})^{\dual}\).

Two symmetric bundles \( (\vb{E},\eps)\) and \( (\vb{F},\varphi)\) are isometric if there is an isomorphism of vector bundles \( i\colon{\vb{E}\rightarrow\vb{F}}\) compatible with the symmetries, \ie such that \( i^{\dual}\varphi i = \eps\). The orthogonal sum of two symmetric bundles has the obvious definition \( {(\vb{E},\eps) \perp (\vb{F},\varphi)}:={(\vb{E}\oplus\vb{F},\eps\oplus\varphi)}\).

\begin{example}[Symmetric line bundles]\label{eg:Symmetric-line-bundles}
Let \( \Pic(X)[2]\) be the subgroup of line bundles of order~\mbox{\( \leq  2\)} in the Picard group \( \Pic(X)\). Any line bundle \( \lb{L}\in\Pic(X)[2]\) defines a symmetric bundle over \( X\). When \( X\) is a projective variety over an algebraically closed field, all symmetric line bundles arise in this way.
In general, the set of isometry classes of symmetric line bundles over \( X\) is described by \( H^1_{\et}(X;\Z/2)\), the first {\'e}tale cohomology group of \( X\) with coefficients in \( \Z/2=\mathrm{O}(1)\). The Kummer sequence exhibits this group as an extension of \( \Pic(X)[2]\):
\begin{equation*}
 0\rightarrow \frac{\OO^*(X)}{\OO^*(X)^2}\rightarrow H^1_{\et}(X;\Z/2) \rightarrow \Pic(X)[2] \rightarrow 0
\end{equation*}
The additional contribution comes from symmetric line bundles of the form \( (\lb{O},\varphi)\), where  \( \varphi\) is some invertible regular function which has no globally defined square root. For example, the trivial line bundle over the punctured disk \( \A^1-0\) carries a non-trivial symmetric form given by multiplication with the standard coordinate function.
\end{example}

More generally, we may consider a variety $X$ together with a fixed line bundle $\lb{L}\in\Pic(X)$. Then a symmetric bundle over $(X,\lb{L})$ is a vector bundle $\vb{E}$ together with a non-degenerate symmetric bilinear form $\eps\colon{\vb{E}\otimes\vb{E}\rightarrow \vb{L}}$. In other words, $\vb{E}$ is symmetric with respect to the ``twisted duality'' on vector bundles defined by $\vb{E}^{\dual\!\lb{L}}:={\mathcal{H}om}(\vb{E},\lb{L})$.

\begin{example}[Hyperbolic bundles]\label{eg:Hyperbolic-bundles}
Any vector bundle $\vb{E}$ gives rise to a symmetric bundle $H_{\lb{L}}(\vb{E}):=(\vb{E}\oplus\vb{E}^{\dual\!\lb{L}},\mm{0&1\\1&0})$ over $(X,\lb{L})$, the hyperbolic bundle associated with $\vb{E}$.
\end{example}

Hyperbolic bundles are the simplest members of a wider class of so-called metabolic bundles: symmetric bundles $(\vb{M},\mu)$ which contain a subbundle
\(
j\colon{\vb{N}\rightarrow\vb{M}}
\)
of half their own rank on which $\mu$ vanishes. In other words, $(\vb{M},\mu)$ is metabolic if it fits into a short exact sequence of the form
\begin{equation}\label{seq:metabolic}
 0\rightarrow \vb{N} \xrightarrow{\;\;j\;\;} \vb{M} \xrightarrow{j^{\dual\!\lb{L}}\mu} \vb{N^{\dual\!\lb{L}}} \rightarrow 0
\end{equation}
The subbundle $\vb{N}$ is then called a Lagrangian of $\vb{M}$. If the sequence splits then $(\vb{M},\mu)$ is isometric to $H_{\lb{L}}(\lb{N})$, at least in any characteristic other than two (\cf \cite{Balmer:Handbook}*{Example~1.1.21}).

\paragraph{Symmetric complexes.} A symmetric complex over $(X,\lb{L})$ is a bounded complex of vector bundles
\begin{equation*}
  \cc{\vb{E}}\colon{}\quad \cdots \rightarrow \vb{E}_2\rightarrow\vb{E}_{1}\rightarrow\vb{E}_{0}\rightarrow \vb{E}_{-1}\rightarrow\cdots
\end{equation*}
together with a symmetric quasi-isomorphism from $\cc{\vb{E}}$ to its
dual complex
\begin{equation*}
  \cc{\vb{E}}^{\dual\!\lb{L}}\colon{}\quad \cdots \rightarrow \vb{E}_{-2}^{\dual\!\lb{L}}\rightarrow\vb{E}_{-1}^{\dual\!\lb{L}}\rightarrow\vb{E}_{0}^{\dual\!\lb{L}}\rightarrow \vb{E}_{1}^{\dual\!\lb{L}}\rightarrow\cdots
\end{equation*}
More generally, an $i$-symmetric complex is a complex $\cc{\vb{E}}$ together with a symmetric quasi-isomorphism
\begin{equation*}
  \eps\colon{\cc{\vb{E}}\overset{\simeq}\longrightarrow\cc{\vb{E}}^{\dual\!\lb{L}}[i]}
\end{equation*}
where $[i]$ denotes an $i$-fold shift to the left. Here, symmetry
means that $\eps^{\dual\!\lb{L}}[i]$ agrees with $\eps$ under the
canonical identification of $\cc{\vb{E}}$ with $(\cc{\vb{E}}^{\dual\!\lb{L}}[i])^{\dual\!\lb{L}}[i]$.

\begin{example}[A $1$-symmetric complex over $\P^1$]\label{eg:generatorW1P1}
Consider the complex of vector bundles \( \OO(-1)\xrightarrow{\cdot x} \OO\) over the projective line \( \P^1\) with coordinates \( [x:y]\). Place \( \OO\) in degree zero. Multiplication by \( y\) induces a symmetric quasi-isomorphism with the dual complex shifted one to the left, so that we obtain a \( 1\)-symmetric complex
\begin{align}\label{eq:generatorW1P1}
        \Psi_0 &:=\left(\vcenter{\xymatrix{
        {\OO(-1)}\ar[r]^{\cdot{x}}\ar^{\cdot{y}}[d]     &{\OO}\ar[d]^{\cdot(-y)}\\
        {\OO}\ar[r]^{\cdot(-x)}                                         &{\OO(1)}
    }}\right)
\end{align}
Analogues of this $1$-symmetric complex over projective curves of higher genus will be described in Remark~\ref{rem:generatorW1C}.
\end{example}

The notions of hyper- and metabolic bundles generalize to complexes in a straight-forward way. In particular, for any bounded complex of vector bundles $\cc{\vb{E}}$ over $X$ and any line bundle $\lb{L}$ over $X$, we have associated $i$-symmetric hyperbolic complexes $H^i_{\lb{L}}(\cc{\vb{E}})$ over $(X,\lb{L})$.

\encouragepagebreak{\subsectionair}
\subsection{\GrothendieckWitt and Witt groups}
Recall that the algebraic K-group $\K_0(X)$ of a variety $X$ is the free abelian group on isomorphism classes of vector bundles over $X$ modulo the relation $\vb{F}\sim\vb{E}+\vb{G}$ for any short exact sequence of vector bundles $0\rightarrow\vb{E}\rightarrow\vb{F}\rightarrow\vb{G}\rightarrow 0$. The \GrothendieckWitt group of $X$ is defined similarly, in terms of symmetric bundles. More precisely, the \GrothendieckWitt group $\GW^0(X,\lb{L})$ of a variety $X$ with a fixed line bundle $\lb{L}$ is the free abelian group on isometry classes of symmetric  bundles over $(X,\lb{L})$ modulo the following two relations:
\begin{align*}
   (\vb{E},\eps)\perp(\vb{G},\gamma) &\sim (\vb{E},\eps) + (\vb{G},\gamma)
     &&\text{for arbitary symmetric bundles $(\vb{E},\eps)$ and $(\vb{G},\gamma)$}\\
   (M,\mu)&\sim H_{\lb{L}}(\vb{N})
     &&\text{for any metabolic bundle $(M,\mu)$ with Lagrangian $\vb{N}$}
\end{align*}
The Witt group $\W^0(X,\lb{L})$ is the quotient of the
\GrothendieckWitt group by the subgroup generated by metabolic
bundles, \ie it is the quotient of $\GW^0(X,\lb{L})$ by the image of
$\K_0(X)$ under the hyperbolic map $H_{\lb{L}}$. \cite{Knebusch:varieties}*{\S~4}

Recall that we always assume our ground field to be of characteristic not two.  Under this assumption, the groups $\GW^0(X,\lb{L})$ and $\W^0(X,\lb{L})$ may alternatively be defined in terms of the bounded derived category of vector bundles over $X$, just as in the case of $\K_0(X)$. In particular, elements of $\GW^0(X,\lb{L})$ and $\W^0(X,\lb{L})$ may be represented by $0$-symmetric complexes. More generally, by using $i$-symmetric complexes, one may define shifted groups $\GW^i(X,\lb{L})$ and $\W^i(X,\lb{L})$. The theory is best set up by introducing a general notion of a triangulated category with duality and defining \GWorW groups in this context, as pioneered by Balmer and Walter in \cites{Balmer:TWGI,Balmer:TWGII,Walter:TGW}.

Thus, for any variety $X$ with a fixed line bundle $\lb{L}$ and
for any integer $i$ we have a \GrothendieckWitt group $\GW^i(X,\lb{L})$ equipped with hyperbolic and forgetful maps
\begin{align*}
  H^i_{\lb{L}}\colon{}\K_0(X)\rightarrow\GW^i(X,\lb{L})\\
  F\colon{\GW^i(X,\lb{L})\rightarrow\K_0(X)}
\end{align*}
The Witt group $\W^i(X,\lb{L})$ is defined as the quotient of $\GW^i(X,\lb{L})$ by the image of $\K_0(X)$ under $H^i_{\lb{L}}$. Moreover, we have exact sequences of the following form, which we will refer to as the Karoubi sequences:
\begin{equation}\label{seq:Karoubi}
\GW^{i-1}(X,\lb{L})\xrightarrow{F}\K_0(X)\xrightarrow{H^i_{\lb{L}}}\GW^i(X,\lb{L})\rightarrow\W^i(X,\lb{L})\rightarrow 0
\end{equation}
It turns out that the \GWorW groups only depend on the class of $\lb{L}$ in $\Pic(X)/2$, and by convention we will usually drop $\lb{L}$ from the notation when it is trivial in $\Pic(X)/2$. Moreover, the groups are $4$-periodic in $i$. Of course, for $i\equiv 0$, we recover the \GrothendieckWitt groups defined in terms of symmetric bundles. Similarly, the groups $\GW^2(X,\lb{L})$ may be defined in terms of \emph{anti}-symmetric bundles. The groups $\GW^1(X,\lb{L})$ and $\GW^3(X,\lb{L})$, on the other hand, are more intricate.

Both \GrothendieckWitt and Witt groups are functorial:  a morphism of varieties $f\colon{X\rightarrow Y}$ induces pullback maps
\[f^*\colon{\W^i(Y,f^*(\lb{L}))\rightarrow\W^i(X,\lb{L})}\]
 What is more, shifted Witt groups constitute a cohomology theory on smooth varieties: for any smooth closed subvariety $Z\subset X$ we have localization sequences relating the Witt groups of $X$ to those of $Z$ and its complement $U:=X-Z$. That is, for any line bundle $\lb{L}$ over $X$ we have long exact sequences
\begin{align}\label{seq:localization}
\notag &\GW^{i-c}(Z,\lb{L}')\rightarrow  \GW^i(X,\vb{L}) \rightarrow \GW^i(U,\vb{L}|_U)\\
       &\quad\rightarrow \W^{i+1-c}(Z,\lb{L}') \rightarrow \W^{i+1}(X,\vb{L}) \rightarrow \W^{i+1}(U,\vb{L}|_U)\\
\notag &\quad\quad\rightarrow \W^{i+2-c}(Z,\lb{L}') \rightarrow \W^{i+2}(X,\vb{L})\rightarrow \W^{i+2}(U\vb{L}|_U) \rightarrow \cdots
\end{align}
Here, $c$ is the codimension and $\vb{N}$ is the normal bundle of $Z$ in $X$, and $\lb{L}'=\det(\vb{N})\otimes\lb{L}|_Z$.

\begin{example}[Witt groups of a geometric point]\label{eg:W-of-geometric-point}
Let $\bar x=\mathrm{Spec}(k)$, where $k$ is an algebraically closed field of characteristic not two. Then one finds that
\begin{align*}
   &\GW^0(\bar x) = \Z   && \W^0(\bar x) = \Z/2 \\
   &\GW^1(\bar x) = 0    && \W^1(\bar x) = 0    \\
   &\GW^2(\bar x) = \Z   && \W^2(\bar x) = 0    \\
   &\GW^3(\bar x) = \Z/2 && \W^3(\bar x) = 0
\end{align*}
The group $\GW^0(\bar x)$ is generated by the trivial symmetric bundle $(\lb{O},\id)$, whereas $\GW^2(\bar x)$ and $\GW^3(\bar x)$ are generated by the hyperbolic bundles $H^2(\OO)=(\OO\oplus\OO,\mm{0 & 1\\ -1 & 0})$ and $H^3(\OO)$, respectively.
\end{example}
We define the reduced Witt- and \GrothendieckWitt groups of a variety \(X\) to be the kernels of the pullbacks along a geometric point \(\bar x \to X\).  On \(\GW^0\) and \(\W^0\), these pullback maps may be identified with the rank homomorphism and the rank homomorphism modulo two, respectively:
\begin{align*}
  \rGW^0(X) &= \ker(\rank\colon \GW^0(X)\twoheadrightarrow \Z)\\
  \rW^0(X) &= \ker(\bar\rank\colon \W^0(X)\twoheadrightarrow \Z/2)
\end{align*}

\begin{example}[Witt groups of $\P^1$ \cites{Arason,Walter:PB}]
For the projective line $\P^1$ over an algebraically closed field of characteristic not two, one finds that
\begin{align*}
  &\W^0(\P^1)=\Z/2\\
  &\W^1(\P^1)=\Z/2\\
  &\W^2(\P^1)=0\\
  &\W^3(\P^1)=0
\end{align*}
The group $\W^0(\P^1)$ is again generated by the trivial symmetric bundle, whereas $\W^1(\P^1)$ is generated by the $1$-symmetric complex $\Psi_0$ given in Example~\ref{eg:generatorW1P1}. The twisted Witt groups $\W^i(\P^1,\OO(1))$ vanish.
\end{example}

In general, when working over an algebraically closed ground field, the Witt groups are always \( 2\)-torsion: \( 2\Psi=0\) for any \( \Psi\in\W^i(X;\lb{L})\). This is a consequence of the following well-known lemma.
\begin{lemma}\label{lem:HF}
Let \( X\) be a variety over an algebraically closed field \( k\) of characteristic not two. Then, for any \( \Psi\in\GW^i(X;\lb{L})\), we have \( H^i_{\lb{L}}(F(\Psi)) \cong 2\Psi\).
\end{lemma}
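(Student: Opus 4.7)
The plan is to prove the congruence $H^i_{\lb{L}}(F(\Psi)) \cong 2\Psi$ by reducing it to two independent observations: first, that $H^i_{\lb{L}}(F(\Psi))$ agrees with the orthogonal sum $\Psi \perp (-\Psi)$ in $\GW^i(X,\lb{L})$ over any base on which $2$ is invertible, and second, that $-\Psi \cong \Psi$ once we know $k$ is algebraically closed of characteristic not two. Here I write $-\Psi := (\cc{\vb{E}}, -\eps)$ for $\Psi = (\cc{\vb{E}}, \eps)$.

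For the first observation, I would verify that the diagonal inclusion $j\colon \cc{\vb{E}} \rightarrow \cc{\vb{E}} \oplus \cc{\vb{E}}$ (given by $\id \oplus \id$ in each degree) is a Lagrangian for the orthogonal sum $\Psi \perp (-\Psi) = (\cc{\vb{E}} \oplus \cc{\vb{E}},\, \eps \oplus (-\eps))$. The form indeed vanishes on the image of $j$ since $\eps - \eps = 0$, and the composition
\begin{equation*}
 \cc{\vb{E}} \xrightarrow{j} \cc{\vb{E}} \oplus \cc{\vb{E}} \xrightarrow{(\eps,\,-\eps)} \cc{\vb{E}}^{\dual\!\lb{L}}[i]
\end{equation*}
fits into a short exact sequence (respectively, distinguished triangle) of the metabolic form~\eqref{seq:metabolic}, since $\eps$ is a (quasi-)iso\-morphism. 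Hence $\Psi \perp (-\Psi)$ is metabolic with Lagrangian $\cc{\vb{E}}$, and the defining relation of $\GW^i$ identifying any metabolic object with the hyperbolic of its Lagrangian gives $\Psi + (-\Psi) = H^i_{\lb{L}}(F(\Psi))$.

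For the second observation, pick $\iota \in k$ with $\iota^2 = -1$; such an element exists by the hypotheses on $k$. Multiplication by $\iota$ on each term of $\cc{\vb{E}}$ defines a chain automorphism of $\cc{\vb{E}}$ (it commutes with every differential, being a central scalar, and similarly with the duality functor) which satisfies
\begin{equation*}
 (\iota\cdot\id_{\cc{\vb{E}}})^{\dual\!\lb{L}}[i] \circ \eps \circ (\iota\cdot\id_{\cc{\vb{E}}}) = \iota^{2}\,\eps = -\eps.
\end{equation*}
Thus $\iota \cdot \id_{\cc{\vb{E}}}$ is an isometry $\Psi \xrightarrow{\sim} (-\Psi)$, so $\Psi \perp (-\Psi) \cong \Psi \perp \Psi = 2\Psi$ in $\GW^i(X,\lb{L})$. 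Combined with the first step this yields $H^i_{\lb{L}}(F(\Psi)) \cong 2\Psi$.

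The main technical point is the first step in the general $i$-symmetric complex setting, where ``Lagrangian'' and the identification of a metabolic object with the hyperbolic of its Lagrangian have to be phrased in the derived language of triangulated categories with duality. For ordinary symmetric bundles ($i = 0$) the verification is elementary from definition~\eqref{seq:metabolic}, and the extension to arbitrary $i$ is a standard piece of the Balmer--Walter framework, so no essentially new input is required.
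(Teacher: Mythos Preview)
Your proof is correct and follows essentially the same two-step strategy as the paper: first identify \(H^i_{\lb{L}}(F(\Psi))\) with \(\Psi \perp (-\Psi)\), then use a square root of \(-1\) to show \((\cc{\vb{E}},-\eps)\cong(\cc{\vb{E}},\eps)\). The only difference is in the first step: the paper writes down an explicit isometry \(\tfrac{1}{2}\mm{1&\eps^{-1}\\1&-\eps^{-1}}\) between \(H^i_{\lb{L}}(\cc{\vb{E}})\) and \((\cc{\vb{E}},\eps)\oplus(\cc{\vb{E}},-\eps)\), whereas you show that the diagonal is a Lagrangian and invoke the metabolic-equals-hyperbolic relation in \(\GW^i\); both routes are standard and yield the same conclusion.
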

\begin{proof}
We may assume that \( \Psi\) is the class of some \( i\)-symmetric complex \( (\cc{\vb E},\eps)\). Using the assumption that \( \mathrm{char}(k)\neq 2\), we define an isometry between the hyperbolic complex \( H^i_{\lb{L}}(\cc{\vb{E}})\) and the direct sum \( (\cc{\vb{E}},\eps)\oplus(\cc{\vb{E}},-\eps)\) by \( \frac{1}{2}\mm{1&\eps^{-1}\\1&-\eps^{-1}}\). Since \( k\) contains a square root of \( -1\), the symmetric complex \( (\cc{\vb{E}},-\eps)\) is isometric to \( (\cc{\vb{E}},\eps)\).
\end{proof}

\encouragepagebreak{\subsectionair}
\subsection{Comparison with KO-theory}
Now suppose $X$ is a smooth variety over $\C$. Then there is an
obvious comparison map from the algebraic K-group $\K_0(X)$ to the
topological K-group $\K^0(X)$ of $X$ equipped with the analytic
topology, sending algebraic vector bundles over $X$ to the underlying
continuous complex vector bundles. In \cite{Me:WCCV}, we explain how
certain mild hypotheses concerning the representability of Witt
groups, or more generally of hermitian K-theory, in the stable
$\A^1$-homotopy category may be used to obtain a convenient definition of
analogous maps to the topological KO-groups of $X$:
\begin{align*}
   \gw^i\colon{}&\GW^i(X,\lb{L})\rightarrow \KO^{2i}(X,\lb{L})\\
   \w^i\colon{}&\W^i(X,\lb{L})\rightarrow \KO^{2i-1}(X,\lb{L})
\end{align*}
(For non-trivial line bundles $\lb{L}$ over $X$, we put
\( \KO^i(X,\lb{L}):=\rKO^{i+2}(\Thom_X(\lb{L})) \) -- \cf
\cite{Me:WCCV}*{Section~2.1}.)
``Convenient'' here means that many useful properties of these maps
follow directly from their definition. In particular, we have the
following two statements:
\begin{itemize}
\item The maps $\w^i$ and $\gw^i$ respect the Karoubi sequences \eqref{seq:Karoubi} in the sense that we have commutative diagrams
\begin{smalldiagram}
\vcenter{
\xymatrix{
  & {\GW^{i-1}\!{X}} \ar[r]\ar[d]
  & {\K_0\!{X}}      \ar[r]\ar[d]
  & {\GW^i\!{X}}     \ar[r]\ar[d]
  & {\W^i\!{X}}      \ar[r]\ar[d]
  & {0}              \ar[d]
  &
  \\
  {\cdots} \ar[r]
  & {\KO^{2i-2}\!{X}}\ar[r]
  & {\K^0\!{X}}      \ar[r]
  & {\KO^{2i}\!{X}}  \ar[r]
  & {\KO^{2i-1}\!{X}}\ar[r]
  & {\K^1\!{X}} \ar[r]
  & {\cdots}
  }}
\end{smalldiagram}%
Here, the lower sequence is part of the long exact sequence known as the Bott sequence in topology. It follows in particular that the maps $\w^i$ factor through the quotients $\tKOK{2i}{(X)}$, so that we have induced maps
\begin{align*}
  \w^i\colon{}&\W^i(X)\rightarrow \tKOK{2i}{(X)}
\end{align*}
\item The maps respect localization sequences \eqref{seq:localization}, in the sense that we have commutative ladder diagrams:
\begin{smalldiagram}
\vcenter{\xymatrix@C=5pt{
  {\GW^{i-c}(Z,\lb{L}')} \ar[r]\ar[d]^{\gw^{i-c}}
& {\GW^i(X,\vb{L})}      \ar[r]\ar[d]^{\gw^i}
& {\GW^i(U,\vb{L}|_U)}   \ar[r]\ar[d]^{\gw^i}
& {\W^{i-c+1}(Z,\lb{L}')}\ar[r]\ar[d]^{\w^{i-c+1}}
& {\W^{i+1}(X,\vb{L})}   \ar[r]\ar[d]^{\w^i}
& {\W^{i+1}(U,\vb{L}|_U)}      \ar[d]^{\w^i}\\
  {\KO^{2(i-c)}(Z,\lb{L}')}   \ar[r]
& {\KO^{2i}(X,\vb{L})}       \ar[r]
& {\KO^{2i}(U,\vb{L}|_U)}    \ar[r]
& {\KO^{2(i-c)+1}(Z,\lb{L}')} \ar[r]
& {\KO^{2i+1}(X,\vb{L})}     \ar[r]
& {\KO^{2i+1}(U,\vb{L}|_U)}
}}
\end{smalldiagram}%
Moreover, these ladders may be extended to the left and the right in a commutative way. We refer to \cite{Me:WCCV}*{Section~2} for details.
\end{itemize}
\begin{remark}\label{rem:disclaimer}
The precise form of our hypotheses is given in
\cite{Me:WCCV}*{Standing Assumptions 1.9}. We refrain from reproducing
the details given there and point to \cite{Me:Thesis}*{Chapter~II} for a
more thorough discussion. The cautious reader is advised that the
results in Sections~\ref{sec:CS:comparison:shifted}~ff.\ do depend on the properties of the comparison
maps following from these hypotheses. All calculations in
Sections~\ref{sec:CS:Curves} and \ref{sec:CS:Surfaces}, however, as well as the
comparison result for the classical Witt group,
Proposition~\ref{prop:comparison_0}, are valid without any such disclaimer.
\end{remark}

\encouragepagebreak{\subsectionair}
\subsection{Stiefel-Whitney classes}\label{sec:CS:SW}
\newcommand{\RP}{\R\P}
As we will see, all elements in the Witt group \( \W^0(X) \) of a
curve or surface can be detected by the rank homomorphism and the
first two Stiefel-Whitney classes. We therefore include a brief account of the general theory of these characteristic classes.

\subsubsection{Stiefel-Whitney classes over varieties}
\label{sec:CS:SW:varieties}
The following construction of Stiefel-Whitney classes over varieties due to Delzant
\cite{Delzant} and Laborde \cite{Laborde} is detailed in
\cite{EKV}*{\S~5}. It works more generally for any scheme \(X\) over \(\Z[\frac{1}{2}]\).

The first Stiefel-Whitney class \( w_1 \) of a symmetric line bundle over \( X \) is defined by the correspondence of isometry classes of such bundles with elements in \( H^1_{\et}(X;\Z/2) \) (see Example~\ref{eg:Symmetric-line-bundles}). For an arbitrary symmetric bundle \( (\vb{E},\eps) \), one considers the scheme \( \P_{\nondeg}(\vb{E},\eps) \) given by the complement of the quadric in \( \P(\vb{E}) \) defined by \( \eps \). The restriction of the universal line bundle \( \OO(-1) \) on \( \P(\vb{E}) \) to \( \P_{\nondeg}(\vb{E},\eps) \) carries a canonical symmetric form. Let \( w \) be its first Stiefel-Whitney class in \( H^1_{\et}(\P_{\nondeg}(\vb{E},\eps);\Z/2) \).
Then the cohomology of \( \P_{\nondeg}(\vb{E},\eps) \) can be decomposed as
\begin{equation}\label{eq:SW_cohomology-of-RP}
   H^*_{\et}(\P_{\nondeg}(\vb{E},\eps);\Z/2)=\bigoplus_{i=0}^{r-1} p^*H^*_{\et}(X;\Z/2)\cdot w^i
\end{equation}
where \( p \) is the projection of \( \P_{\nondeg}(\vb{E},\eps) \) onto \( X \). In particular, \( w^r \) is a linear combination of smaller powers of \( w \), so that for certain coefficients
\begin{equation*}
   w_i(\vb{E},\eps)\in H^i(X;\Z/2)
\end{equation*}
we have
\begin{equation*}
   w^r + p^*w_1(\vb{E},\eps)\cdot w^{r-1} + p^*w_2(\vb{E},\eps)\cdot w^{r-2} + \dots + p^*w_r(\vb{E},\eps) = 0
\end{equation*}
These coefficients \( w_i(\vb{E},\eps) \) are defined to be the Stiefel-Whitney classes of \( (\vb{E},\eps) \). They are characterized by the following axiomatic description \cite{EKV}*{\S~1}:
\\[0.5\baselineskip]
\newcommand*\mylisting[2]{%
   \noindent
 \parbox[t]{\widthof{\textbf{Normalization. }}}{\raggedright \textbf{#1.}}%
 \parbox[t]{\linewidth-\widthof{\textbf{Normalization. }}}{#2}
   \\[0.5\baselineskip]
}
\mylisting{Normalization}{%
   The first Stiefel-Whitney class of a symmetric line bundle \( w_1 \) is as defined above.%
   }%
\mylisting{Boundedness}{%
   \( w_i(\vb{E},\eps)=0\; \) for all \( \;i>\rank(\vb{E}) \)%
   }%
\mylisting{Naturality}{%
   For any morphism \( f\colon{Y\rightarrow X} \), we have
 \( f^*w_i(\vb{E},\eps)=w_i(f^*(\vb{E},\eps)) \).%
   }%
\mylisting{Whitney sum formula}{%
  The total Stiefel-Whitney class \( w_t:=1+\sum_{i\geq 1} w_i(\vb{E},\eps)t^i \) satisfies
 \begin{equation*}
  w_t((\vb{E},\eps)\perp(\vb{F},\varphi)) = w_t(\vb{E},\eps)\cdot w_t(\vb{F},\varphi)
 \end{equation*}%
}
\shortendisplayskip

The Stiefel-Whitney classes of a metabolic bundle only depend on the Chern classes of its Lagrangian. More precisely, Proposition~5.5 in \cite{EKV} gives the following formula for a metabolic bundle \( (\vb{M},\mu) \) with Lagrangian \( \vb{L} \):
\begin{align}
 w_t(\vb{M},\mu) &= \sum_{j=0}^{\rank(\vb L)}(1+(-1)t)^{\rank(\vb L)-j}c_j(\vb{L})t^{2j} \label{eq:dt}
\intertext{%
For example, for the first two Stiefel-Whitney classes we have
}
 w_1(\vb{M},\mu) &= \rank(\vb{L})(-1)  \label{eq:dt:1}\\
 w_2(\vb{M},\mu) &= \binom{\rank(\vb{L})}{2}(-1,-1) + c_1(\vb{L}) \label{eq:dt:2}
\end{align}
It follows that \( w_t \) descends to a well-defined homomorphism from the \GrothendieckWitt group of \( X \) to the multiplicative group of invertible elements in \( \bigoplus_i H^i_{\et}(X;\Z/2)t^i \):
\begin{align*}
 w_t&\colon{\GW^0(X)\longrightarrow \left(\bigoplus_i H^i_{\et}(X;\Z/2)t^i\right)^{\times}}
\intertext{%
In particular, the individual classes descend to well-defined maps
}%
 w_i&\colon\GW^0(X)\rightarrow H^i_{\et}(X;\Z/2)
\end{align*}

In general, none of the individual Stiefel-Whitney classes apart from \( w_1 \) define homomorphisms on \( \GW^0(X) \). It does follow from the Whitney sum formula, however, that \( w_2 \) restricts to a homomorphism on the kernel of \( w_1 \), and in general \( w_i \) restricts to a homomorphism on the kernel of (the restriction of) \( w_{i-1} \).

It is not generally true either that the Stiefel-Whitney classes factor through the Witt group of \( X \): the right-hand side of \eqref{eq:dt} may be non-zero. However, we may deduce from \eqref{eq:dt:1} that the restriction of the first Stiefel-Whitney class to the reduced group \( \rGW^0(X) \) factors through \( \rW^0(X) \), yielding a map
\begin{align*}
 \bar{w}_1&\colon{\rW^0(X)}\longrightarrow H^1_{\et}(X;\Z/2)
\intertext{%
We use a different notation to emphasize that the values of \( w_1 \)
and \( \bar{w}_1 \) on a given symmetric bundle  may differ. That is,
if \( (\vb{E},\eps) \) is a symmetric bundle of even rank defining an
element \( \Psi \) in \( \rW^0(X) \), then in general \(
\bar{w}_1(\Psi) \neq w_1(\vb E,\eps) \). Rather, \( \bar{w}_1 \) needs
to be computed on a lift of \( (\vb E,\eps) \) to \( \rGW^0(X)
\). Equation \eqref{eq:dt:2} implies that \( w_2 \) also  induces a well-defined map
}
 \bar{w}_2&\colon{\rW^0(X)}\longrightarrow \quotient{H^2_{\et}(X;\Z/2)}{\Pic(X)}
\end{align*}
As before, \( \bar{w}_1 \) is a surjective homomorphism, while \( \bar{w}_2 \) restricts to a homomorphism on \( \ker(\bar{w}_1) \).

\subsubsection{Stiefel-Whitney classes over fields}
\label{sec:CS:SW:fields}
When \( X \) is a field \( F \) (of characteristic not two), the Stiefel-Whitney classes factor through Milnor's K-groups modulo two, which are commonly denoted \( k^M_i(F) \). We will denote the classes with values in \( k^M_i(F) \) by \( w_i^M \):
\begin{equation*}
  \xymatrix{
  {\GW^0(F)}\ar[rd]_{w_i^M}\ar[rr]^{w_i} & & H^i_{\et}(F;\Z/2)\\
   & k_i^M(F)\ar[ru]_{\alpha}
  }
\end{equation*}
Both the groups \( k_i^M(F) \) and the classes \( w_i^M \) were constructed in \cite{Milnor:Conjectures}. In the same paper, Milnor asked whether the map \( \alpha \) appearing in the factorization was an isomorphism, a question that later became known as one of the Milnor conjectures. For \( i\leq 2 \), which will be the range mainly relevant for us, an affirmative answer was given by Merkurjev \cite{Merkurjev:MC}. A general affirmation of the conjecture was found more recently by Voevodsky \cite{Voevodsky:MC-with-2}.

A second conjecture of Milnor, also to be found in
\cite{Milnor:Conjectures}, concerned the relation of \( \W^0(F) \) to
\( k_i^M(F) \). To state it, we introduce the fundamental
filtration. If we view \( \W^0(F) \) as a ring, then the kernel of the
rank homomorphism \( \rank\colon\W^0(F)\rightarrow\Z/2 \) becomes an ideal inside \( \W^0(F) \), which is traditionally written as \( I(F) \). The powers of this ideal yield a filtration
  \[ \W^0(F)\supset I(F)\supset I^2(F)\supset I^3(F)\supset \dots \]
on the Witt ring of \( F \), known as the fundamental filtration. Milnor conjectured that the associated graded ring was isomorphic to \( k_*^M(F):=\oplus_i k_i^M(F) \). As a first step towards a proof, he constructed maps \( k_i^M(F)\rightarrow I^i(F)/I^{i+1}(F) \) in one direction. Moreover, in degrees one and two, Milnor could show that these are isomorphisms, with explicit inverses induced by the Stiefel-Whitney classes \( w_1^M \) and \( w_2^M \). In combination with the isomorphisms \(  \alpha  \) above, one obtains the following identifications:
\begin{align}
 \notag \rank     &\colon \;\quotient{\W^0(F)}{I(F)}\;\; \overset{\cong}\longrightarrow H^0_{\et}(F;\Z/2)\\
 \notag \bar{w}_1 &\colon \;\;\;\;\quotient{I(F)}{I^2(F)}\; \overset{\cong}\longrightarrow H^1_{\et}(F;\Z/2)\\
 \notag \bar{w}_2 &\colon \;\;\quotient{I^2(F)}{I^3(F)}\; \overset{\cong}\longrightarrow H^2_{\et}(F;\Z/2)
\intertext{%
\cite{Milnor:Conjectures}*{\S~4}. Today, these isomorphisms are commonly denoted \( e^0 \), \( e^1 \), \( e^2 \). It was clear from the outset, however, that the higher isomorphisms}
 e^i       &\colon \quotient{I^i(F)}{I^{i+1}(F)} \overset{\cong}\longrightarrow H^i_{\et}(F;\Z/2)\label{eq:Milnors-isos}
\end{align}
conjectured by Milnor could not be induced by higher Stiefel-Whitney
classes. Their existence was ultimately proved in
\cite{OVV:MilnorConjecture}. Unlike in the case of Stiefel-Whitney
classes, it does not seem to be clear how these isomorphisms may be
generalized to varieties (\cf \cite{Auel:MilnorRemarks}).

\subsubsection{Stiefel-Whitney classes over complex varieties}
\label{sec:CS:SW:complex}
Over a complex variety, the {\'e}tale Stiefel-Whitney classes of symmetric vector bundles are compatible with the Stiefel-Whitney classes of real vector bundles used in topology:

Suppose first that \( Y \) is an arbitrary CW complex. If we follow the construction of Stiefel-Whitney classes described above, with singular cohomology in place of {\'e}tale cohomology, we obtain classes \(  w_i(\vb{E},\eps) \) in \(  H^i(Y; \Z/2)  \) for every complex symmetric bundle \(  (\vb E, \eps)  \) over \(  Y  \).  On the other hand, given a real vector bundle \(  \vb F  \) over \(  Y  \), we have the usual Stiefel-Whitney classes \(  w_i(\vb F) \). We claim that these classes are compatible in the following sense. Recall that we have a one-to-one correspondence
\begin{equation*}
 \Re\colon \left(\txt{isometry classes of \\ complex symmetric \\
     bundles over \( Y \)}\right)
   \rightarrow \left(\txt{isomorphism classes \\ of real vector
      bundles \\ over \( Y \)}\right)
\end{equation*}
Explicitely, \( \Re \) sends a complex symmetric vector bundle \( (\vb
E, \eps) \) to the unique real subbundle \( \Re(\vb E, \eps) \) of \(
\vb E \) on which \( \eps \) restricts to a real positive definite
form (\cf \cite{Me:WCCV}*{Lemma~1.3 and its corollary}). It follows
from the construction that the two topological versions of
Stiefel-Whitney classes discussed agree under \( \Re \):
\begin{lemma}[\cite{Me:Thesis}*{Lemma~III.1.1}]
For any complex symmetric vector bundle \(  (\vb E, \eps)  \) over a CW complex \(  Y  \), the classes
\(  w_i(\vb{E},\eps) \) and  \(  w_i(\Re(\vb E,\eps)) \) in \(  H^i(Y;
\Z/2)  \) agree. \qed
\end{lemma}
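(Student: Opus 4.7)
The plan is to exploit the fact that both families of classes are read off, by the same recipe, from the polynomial relation satisfied by the first Stiefel-Whitney class of a canonical line bundle over an auxiliary projective bundle. Concretely, I would compare the topological Delzant-Laborde setup over $\P_\nondeg(\vb E,\eps)$ with the classical real setup over $\P_\R(\Re(\vb E,\eps))$ by constructing a natural homotopy equivalence between the two base spaces that matches up the two canonical line bundles.

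For the homotopy equivalence, the natural candidate is the map $\P_\R(\Re(\vb E,\eps)) \hookrightarrow \P_\nondeg(\vb E,\eps)$ induced by the inclusion $\Re(\vb E,\eps)\subset \vb E$: a non-zero real vector $v\in \Re(\vb E,\eps)$ satisfies $\eps(v,v)>0$, so its complex projective class lies in $\P_\nondeg$. I would verify that this inclusion is a deformation retract. Any vector with $\eps(v,v)\neq 0$ can be rescaled, uniquely up to $\{\pm 1\}$, so that $\eps(v,v)\in \R_{>0}$; in the decomposition $v=a+ib$ with $a,b\in \Re(\vb E,\eps)$ this means $\langle a,b\rangle=0$ and $|a|^2>|b|^2$. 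The straight-line homotopy $v_t := a+(1-t)ib$ then satisfies $\eps(v_t,v_t)=|a|^2-(1-t)^2|b|^2>0$ throughout, is $\{\pm 1\}$-equivariant, and terminates at $a\in \Re(\vb E,\eps)$. Along this retraction the symmetric line bundle $(\OO(-1)|_{\P_\nondeg},\eps|)$ pulls back to a symmetric line bundle whose realification is precisely the real tautological line bundle $\OO_\R(-1)$ on $\P_\R(\Re(\vb E,\eps))$.

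Given these compatibilities, the conclusion is formal. The rank-one base case---that $w_1$ of a complex symmetric line bundle $(\lb L,\eps)$ over $Y$ equals $w_1$ of the real line bundle $\Re(\lb L,\eps)$---asserts that the two classifications, both valued in $H^1(Y;\Z/2)$, agree under the natural bijection $\Re$. This can be read off from the universal case, since $\CP^\infty \setminus \{\sum z_i^2=0\} \simeq \R\P^\infty$ by the same deformation above and the universal symmetric line bundle realifies to the universal real line bundle. For $r\geq 2$, the defining equations $w^r + \sum_{i=1}^{r} p^*w_i(\vb E,\eps)\, w^{r-i}=0$ and $w^r + \sum_{i=1}^{r} p^*w_i(\Re(\vb E,\eps))\, w^{r-i}=0$ now live in the same $\Z/2$-cohomology ring (via the homotopy equivalence) and feature the same generator $w$ (by the base case), so the uniqueness of the coefficients in the decomposition~\eqref{eq:SW_cohomology-of-RP} forces the two sets of classes to coincide. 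The main technical obstacle, and really the only thing that needs care, is verifying the deformation retraction together with the identification of the canonical line bundles; once these are in place, everything else is bookkeeping.
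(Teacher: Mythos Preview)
Your proposal is correct. The paper itself does not supply a proof of this lemma: it simply remarks that ``it follows from the construction'' and defers to the author's thesis, marking the statement with a \qed. Your argument --- identifying \(\P_\R(\Re(\vb E,\eps))\) with \(\P_{\nondeg}(\vb E,\eps)\) via an explicit deformation retraction that carries the real tautological line bundle to \(\Re(\OO(-1)\vert_{\P_{\nondeg}},\eps\vert)\), and then reading off the equality of coefficients from the shared relation \eqref{eq:SW_cohomology-of-RP} --- is precisely the natural way to unpack ``follows from the construction'', and the details you give (the rescaling to \(\eps(v,v)>0\), the linear homotopy \(v_t=a+(1-t)ib\), the \(\pm 1\)-equivariance) are sound. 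One small remark: your treatment of the rank-one base case via \(\CP^\infty\setminus Q\simeq\R\P^\infty\) is fine, but it is perhaps more direct to observe that both classifications of symmetric line bundles factor through the structure group \(O(1,\C)=\{\pm 1\}=O(1,\R)\), so both \(w_1\)'s are literally the same classifying map into \(K(\Z/2,1)\).
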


Now let \( X \) be a complex variety. Then the lemma implies that the
{\'e}tale and the topological Stiefel-Whitney classes are compatible
in the sense that we have a commutative square as follows:
\begin{equation*}
\xymatrix{
    {\left(\txt{symmetric vector bundles \\ over $X$}\right)}\ar[d]\ar[r]^-{w_i}
    & {H^i_{\et}(X;\Z/2)}\ar[d]^{\cong}\\
    {\left(\txt{real vector bundles\\over $X(\C)$}\right)}   \ar[r]^-{w_i}
    & {H^i(X(\C);\Z/2)}
   }
\end{equation*}

Moreover, if we specialize equation~\eqref{eq:dt} for the Stiefel-Whitney classes of a metabolic bundle \( (\vb{M},\mu) \) with Lagrangian \( \vb{L} \) to the case of a complex variety, then since \( -1 \) is a square in \( \C \) we find that
\begin{equation*}
   w_{2i}(\vb{M},\mu) = c_i(\vb{L}) \in H_{\et}^{2i}(X;\Z/2)
\end{equation*}
and all odd Stiefel-Whitney classes of \( (\vb{M},\mu) \) vanish. This corresponds to the well-known fact in topology that the even Stiefel-Whitney classes of a complex vector bundle agree with its Chern classes modulo two, whereas its odd Stiefel-Whitney classes are zero \cite{MilnorStasheff}*{Problem~14.B}.
It follows in particular that the odd Stiefel-Whitney classes factor through the (reduced) Witt group of \( X \), while the even classes induce maps
\begin{equation*}
  \bar{w}_{2i}\colon \rW^0(X)\longrightarrow \frac{H^{2i}_{\et}(X;\Z/2)}{\im(c_i)}
\end{equation*}
We summarize the situation as follows.

\begin{proposition}\label{prop:Stiefel-Whitney-classes}
Let \( X \) be a complex variety. The Stiefel-Whitney classes factor through the reduced \GrothendieckWitt and KO-group of \( X \) to yield commutative diagrams
\begin{equation*}
   \xymatrix{
   {\rGW^0(X)}\ar[d]\ar[r]^-{w_i} & {H^i_{\et}(X;\Z/2)}\ar[d]^{\cong}\\
   {\rKO^0(X)}\ar[r]^-{w_i}   & {H^i(X;\Z/2)}
   }
\end{equation*}
Moreover, for all odd \( i \) we have induced maps
\begin{equation*}
   \xymatrix{
   {\rW^0(X)}\ar[d]\ar[r]^-{\bar{w}_i} & {H^i_{\et}(X;\Z/2)}\ar[d]^{\cong}\\
   {\trKOK{0}{(X)}}\ar[r]^-{\bar{w}_i} & {H^i(X;\Z/2)}
   }
\end{equation*}
and for even \( i \) we have induced maps
\begin{samepage}
\begin{equation*}
   \xymatrix{
   {\rW^0(X)}\ar[d]\ar[r]^-{\bar{w}_i} & {\dfrac{H^i_{\et}(X;\Z/2)}{\im(c_i)}}\ar@{->>}[d]\\
   {\trKOK{0}{(X)}}\ar[r]^-{\bar{w}_i}    & {\dfrac{H^i(X;\Z/2)}{\im(c_i)}}
   }
\end{equation*}
\shortendisplayskip
\qed
\end{samepage}
\end{proposition}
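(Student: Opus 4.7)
The three diagrams sit on top of each other and each is built from pieces already assembled above the statement. For the first diagram, that $w_i$ descends to a map from $\GW^0(X)$ (étale) and from $\KO^0(X)$ (topological) was established in Section~\ref{sec:CS:SW:varieties} and is classical, respectively. The passage to reduced groups is automatic because the pullback of any symmetric bundle along the projection $X\to\mathrm{Spec}(k)$ is the trivial form $(\OO^n,\id)$, whose Stiefel-Whitney classes vanish in positive degree, and likewise for the topological realification. Commutativity is then exactly the content of the lemma immediately preceding the proposition, combined with the étale-to-singular comparison $H^*_{\et}(X;\Z/2)\cong H^*(X(\C);\Z/2)$ which appears as the right-hand vertical arrow.

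For the remaining two diagrams one must verify (a) that $\bar{w}_i$ factors through the indicated quotient on both source and target and (b) that the resulting squares commute. For (a) on the algebraic side, I would specialize formula~\eqref{eq:dt} to a complex variety: because $-1$ is a square in $\C$, the symbols $(-1)\in H^1_{\et}(X;\Z/2)$ and $(-1,-1)\in H^2_{\et}(X;\Z/2)$ vanish, so that every metabolic $(\vb{M},\mu)$ with Lagrangian $\vb{L}$ satisfies
\begin{equation*}
   w_{2i+1}(\vb{M},\mu)=0, \qquad w_{2i}(\vb{M},\mu)=c_i(\vb{L}).
\end{equation*}
Thus the odd classes descend to $\rW^0(X)$ outright, while the even classes descend only after quotienting the target by $\im(c_i)$. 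On the topological side, the analogous input is the standard fact that the realification $\vb{E}_{\R}$ of a continuous complex bundle $\vb{E}$ satisfies $w_{2i+1}(\vb{E}_{\R})=0$ and $w_{2i}(\vb{E}_{\R})=c_i(\vb{E})\bmod 2$ (\cite{MilnorStasheff}*{Problem~14.B}), so the odd topological Stiefel-Whitney classes vanish on the image of $\rK^0(X)$ in $\rKO^0(X)$ and the even ones vanish modulo $\im(c_i)$. Together these factorizations produce the required dashed arrows in each diagram.

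Step (b) is then formal. The square for the unreduced groups from the first diagram commutes, so it suffices to observe that the vertical comparison maps carry the algebraic quotients to the topological ones: the image of $\K_0(X)$ under the hyperbolic map $H^0_{\lb{L}}$ maps to the image of $\rK^0(X)$ under realification, and the étale-to-singular comparison sends algebraic Chern classes to topological ones, matching $\im(c_i)^{\et}$ with $\im(c_i)^{\topl}$. I do not foresee any substantive obstacle: the argument is essentially bookkeeping, combining the compatibility lemma, the metabolic Stiefel-Whitney formula~\eqref{eq:dt}, and the compatibility of algebraic and topological Chern classes.
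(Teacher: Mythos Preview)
Your proposal is correct and follows precisely the route the paper itself takes: the proposition is stated with a \qed\ because it is regarded as a summary of the preceding discussion, namely the compatibility lemma, the specialization of formula~\eqref{eq:dt} using that $-1$ is a square in $\C$, and the standard identity $w_{2i}(\vb{E}_{\R})=c_i(\vb{E})\bmod 2$ from \cite{MilnorStasheff}. There is no independent argument in the paper beyond what you have reconstructed.
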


\encouragepagebreak{\sectionair}
\section{Curves}\label{sec:CS:Curves}
In the next section, we will compute the Witt groups and the groups \(
\tKOK{2i}{(X)} \) for smooth varieties of dimension at most two. Here,
we briefly summarize the results we will obtain in the case of
curves. Moreover, we give a complete description of their
\GrothendieckWitt groups.

\subsection{Grothendieck-Witt groups of curves}
Let \( C \) be a smooth curve over an algebraically closed field \( k \) of characteristic not two, and let \( \Pic(C) \) be its Picard group.

If \( C \) is projective, say of genus \( g \), we may write \( \Pic(C) \) as \( \Z\oplus\Jac(C) \). The free summand \( \Z \) is generated by a line bundle \( \OO(p) \) associated with a point \( p \) on \( C \), while \( \Jac(C) \) denotes (the closed points of) the Jacobian of \( C \), a \( g \)-dimensional abelian variety parametrizing line bundles of degree zero over \( C \). As a group, \( \Jac(C) \) is two-divisible, and \( \Jac(C)[2] \) has rank \( 2g \) (\eg \cite{Milne:LEC}*{Chapter 14}). In particular, when \( C \) is projective, we have
\begin{align*}
 H^1_{\et}(C;\Z/2)&=\Pic(C)[2]=(\Z/2)^{2g}\\
 H^2_{\et}(C;\Z/2)&=\Pic(C)/2=\Z/2
\end{align*}

If \( C \) is not projective, it is affine and may be obtained from a smooth projective curve by removing a finite number of points. Note that when we remove a single point \( p \) from a projective curve \( \bar C \), the Picard group is reduced to \( \Pic(\bar C-p)=\Jac(\bar C) \). It follows that the Picard group of any affine curve is two-divisible. In particular, for any affine curve \( C \) we have
\begin{equation*}
  H^2_{\et}(C;\Z/2)=\Pic(C)/2=0
\end{equation*}

The reduced algebraic K-group of a smooth curve may be identified with its Picard group via the first Chern class, so that we have an isomorphism
\begin{equation}\label{eq:Kcurve}
  \K_0(C)\cong\Z\oplus\Pic(C)
\end{equation}
The following proposition shows that, similarly, the \GrothendieckWitt and Witt groups of \( C \) are completely determined by \( \Pic(C) \) and the group \( H^1_{\et}(C;\Z/2) \) of symmetric line bundles.

\begin{theorem}\label{thm:W_Curve}
Let \( C \) be a smooth curve over an algebraically closed field of characteristic not two. The \GrothendieckWitt and Witt groups of \( C \) are as follows:
\begin{align*}
   &\GW^0(C) = [\Z] \oplus H^1_{\et}(C;\Z/2)\oplus H^2_{\et}(C;\Z/2)  && \W^0(C) = [\Z/2] \oplus H^1_{\et}(C;\Z/2)  \\
   &\GW^1(C) = \Pic(C)                                  && \W^1(C) = H^2_{\et}(C;\Z/2)                    \\
   &\GW^2(C) = [\Z]                                     && \W^2(C) = 0                                \\
   &\GW^3(C) = [\Z/2] \oplus \Pic(C)                    && \W^3(C) = 0
\intertext{Here, the summands in square brackets are the trivial ones coming from a point, \ie those that disappear when passing to reduced groups. In particular, for a projective curve of genus \( g \) we obtain:}
   &\GW^0(C) = [\Z]   \oplus (\Z/2)^{2g+1}             && \W^0(C) = [\Z/2] \oplus (\Z/2)^{2g}  \\
   &\GW^1(C) = \Z     \oplus \Jac(C)                   && \W^1(C) = \Z/2                 \\
   &\GW^2(C) = [\Z]                                    && \W^2(C) = 0                    \\
   &\GW^3(C) = [\Z/2] \oplus \Z \oplus \Jac(C)         && \W^3(C) = 0
\intertext{For affine curves, no non-trivial twists are possible. When \( C \) is projective, the groups twisted by a generator \( \OO(p) \) of \( \Pic(C)/2 \) are as follows:}
   &\GW^0(C,\OO(p)) = \Z \oplus (\Z/2)^{2g}             && \W^0(C,\OO(p)) = (\Z/2)^{2g}         \\
   &\GW^1(C,\OO(p)) = \Z \oplus \Jac(C)                 && \W^1(C,\OO(p)) = 0      \\
   &\GW^2(C,\OO(p)) = \Z                                && \W^2(C,\OO(p)) = 0      \\
   &\GW^3(C,\OO(p)) = \Z \oplus \Jac(C)                 && \W^3(C,\OO(p)) = 0
\end{align*}
\end{theorem}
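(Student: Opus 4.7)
The plan is to compute the Witt groups $\W^i(C)$ using the Gersten-Witt spectral sequence of Pardon and Balmer-Walter, and then derive the Grothendieck-Witt groups from Karoubi's sequence~\eqref{seq:Karoubi}. Because $\dim C = 1$, the spectral sequence has only two non-trivial columns and degenerates at $E_2$, so it reduces to short exact sequences
\[
0 \to H^1(C, \mathbf{W}^{i-1}) \to \W^i(C) \to H^0(C, \mathbf{W}^i) \to 0,
\]
where $\mathbf{W}^q$ denotes the Zariski sheaf associated with the shifted Witt group. Since the Witt group $\W^j(F)$ of any field $F$ of characteristic not two vanishes unless $j \equiv 0 \pmod 4$, the sheaves $\mathbf{W}^q$ are zero for $q \not\equiv 0 \pmod 4$. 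This already settles two of the four shifts: $\W^2(C) = \W^3(C) = 0$.

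For $i = 0$ and $i = 1$ the task reduces to identifying $H^0(C, \mathbf{W})$ and $H^1(C, \mathbf{W})$ with étale cohomology. Here Milnor's conjecture is the key input: in the low-degree range required (due to Merkurjev), it gives $\mathbf{I}^k/\mathbf{I}^{k+1} \cong \mathbf{H}^k_{\et}(-;\Z/2)$ for the graded pieces of the fundamental filtration on $\mathbf{W}$. Since $k(C)$ has cohomological dimension one over an algebraically closed field, $I^2$ of every residue field vanishes, and the filtration collapses to $\mathbf{W} \cong \Z/2 \oplus \mathbf{H}^1_{\et}(-;\Z/2)$. Combining this with the Bloch-Ogus spectral sequence on $C$ (itself collapsing since $\dim C = 1$) yields
\[
H^0(C, \mathbf{W}) \cong \Z/2 \oplus H^1_{\et}(C; \Z/2), \qquad H^1(C, \mathbf{W}) \cong H^2_{\et}(C; \Z/2).
\]
The extension in the short exact sequence for $\W^0(C)$ is split by the rank, and in the $\W^1(C)$ sequence the $H^0$-term vanishes, so the claimed descriptions of $\W^0(C)$ and $\W^1(C)$ follow.

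The Grothendieck-Witt groups are then assembled shift by shift from the $\W^i(C)$ just computed and the formula $\K_0(C) = \Z \oplus \Pic(C)$ of~\eqref{eq:Kcurve}, via the Karoubi exact sequence
\[
\GW^{i-1}(C) \xrightarrow{F} \K_0(C) \xrightarrow{H^i} \GW^i(C) \to \W^i(C) \to 0,
\]
together with four-periodicity and Lemma~\ref{lem:HF} (which makes $H^i \circ F$ multiplication by two). The $\Z$ or $\Z/2$ summand in each shift tracks the Grothendieck-Witt group of a $k$-rational point, while the $\Pic(C)$ summand arises as the image of $\Pic(C) \subset \K_0(C)$ under the hyperbolic map. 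The twisted case for projective $C$ is handled by the same scheme with twists propagated through the Gersten-Witt complex and the Karoubi sequence; since $\lb{L}|_x$ is trivial at every closed point, the local Witt-group contributions are insensitive to the twist.

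The main technical obstacle is the identification of the Zariski sheaf cohomology of $\mathbf{W}$ with étale cohomology in the second step, which rests on Milnor's conjecture (Merkurjev in the range required) and Bloch-Ogus; this is essentially the only point where the assumption of an algebraically closed ground field is used in the computation. Assembling the Grothendieck-Witt groups from the Karoubi sequence is more routine but demands careful bookkeeping of the hyperbolic and forgetful maps across all four shifts to pin down each extension.
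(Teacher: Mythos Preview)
Your approach to the untwisted Witt groups via the collapsing Gersten-Witt and Pardon spectral sequences is precisely what the paper does (deferred there to the more general Theorem~\ref{thm:W_Surface}), and extracting the \GrothendieckWitt groups from the Karoubi sequence is also the paper's route. Two points, however, deserve more care than you give them.

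First, for the twisted Witt groups \(\W^i(C,\OO(p))\) the paper does not track the twist through the Gersten-Witt complex. Instead it uses the localization sequence for the inclusion \(C-p\hookrightarrow C\): since \(\OO(p)\) restricts trivially to the affine curve \(C-p\), the twisted groups of \(C\) sit between the already-known untwisted groups of \(C-p\) and of the point. This is cleaner than your sketch, where you note correctly that the local terms are insensitive to the twist but say nothing about how the twist alters the differentials of the Gersten-Witt complex---and it must, since \(\W^0(C,\OO(p))\) differs from \(\W^0(C)\).

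Second, calling the assembly of the \(\GW^i\) ``routine bookkeeping'' undersells what is needed. The Karoubi sequence yields extensions that are not all trivial. For example, \(\rGW^1(C)\) sits in a sequence \(0\to\Z\oplus\Jac(C)\to\rGW^1(C)\to\Z/2\to 0\) which does \emph{not} split: the paper exhibits a generator \(\Psi_g\in\GW^1(C)\) with \(2\Psi_g=H^1F(\Psi_g)=H^1(1,\OO)\), forcing the non-split extension. Conversely, one must check that the extension for \(\rGW^0(C)\) \emph{does} split, which the paper does by writing down an explicit section. Lemma~\ref{lem:HF} alone does not settle these; the paper computes the images of the compositions \(FH^i\) on \(\rK_0(C)\) separately for even and odd \(i\) (equation~\eqref{eq:FHcurve}) and uses them at each step. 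The twisted \GrothendieckWitt groups are then handled by comparing the twisted Karoubi sequence with that of \(C-p\).
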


\begin{remark}[Explicit generators]\label{rem:generatorW1C}
The isomorphism between \( \rW^0(C) \) and \( H^1_{\et}(X;\Z/2) \) is the obvious one, \ie \( \rW^0(C) \) is generated by symmetric line bundles (see Example~\ref{eg:Symmetric-line-bundles}). When \( C \) is projective of genus \( g \), the group \( \W^1(C) \) has a single generator \( \Psi_g \), which for \( g=0 \) we may take to be the \( 1 \)-symmetric complex given in Example~\ref{eg:generatorW1P1}. When \( g\geq 1 \), a generator may be constructed as follows: Pick two distinct points \( p \) and \( q \) on \( C \). Let \( s \) and \( t \) be sections \( \OO\rightarrow\OO(p) \) and \( \OO\rightarrow\OO(q) \) of the associated line bundles that vanish at \( p \) and \( q \), respectively. Choose a line bundle \( \lb{L}\in\Pic(C) \) that squares to \( \OO(p-q) \). Then
\begin{align}\label{eq:generatorW1C}
\Psi_g &:=\left(\vcenter{\xymatrix{{\lb{L}\otimes\OO(-p)}\ar[r]^{s}\ar[d]^{t} & {\lb{L}}\ar[d]^{-t}\\
   {\lb{L}\otimes\OO(q-p)}\ar[r]^{-s}         & {\lb{L}\otimes\OO(q)}\\
   }}\right)
\end{align}
is a \( 1 \)-symmetric complex over \( C \) generating \( \W^1(C) \).
\end{remark}

\proof[Proof of Theorem~\ref{thm:W_Curve}, assuming Theorem~\ref{thm:W_Surface}]
The values of the untwisted Witt groups may be read off directly from Theorem~\ref{thm:W_Surface} below: we simply note that \( c_1\colon \Pic(C)\rightarrow H^2_{\et}(C;\Z/2) \) is surjective and induces an iso\-morphism \( \Pic(C)/2\cong H^2_{\et}(C;\Z/2) \). The twisted groups \( \W^i(C,\OO(p)) \) of a projective curve \( C \) may then be calculated from the localization sequence \eqref{seq:localization} associated with the inclusion of the complement of a point \( p \) of \( C \) into \( C \). Indeed, since the line bundle \( \OO(p) \) is trivial over \( C-p \), this sequences takes the following form:
\begin{align*}
\cdots\rightarrow\W^{i-1}(p)\rightarrow\W^i(C,\OO(p))\rightarrow\W^i(C-p)\rightarrow\W^i(p)\rightarrow\cdots
\end{align*}
We now deduce the values of the \GrothendieckWitt groups, concentrating on the case when \( C \) is projective. The affine case is very similar, except that all extension problems disappear.

\paragraph{Untwisted case.}%
To compute the untwisted \GrothendieckWitt groups of a projective curve \( C\), we use the Karoubi sequences \eqref{seq:Karoubi} and their restrictions to reduced groups:
\begin{align*}%\label{seq:GWWcurve}
   \rGW^{i-1}(C) \overset{F}{\rightarrow} \rK_0(C) \overset{H^i}{\rightarrow} \rGW^i(C) \twoheadrightarrow \rW^i(C) \rightarrow 0
\end{align*}
Note first that the identification of the K-group of our projective curve \( C \) given in \eqref{eq:Kcurve} may be written explicitly as
\begin{align*}
\K_0(C)               &\overset{\cong}\longleftrightarrow  \Z \oplus \Z \oplus \Jac(C)\\
[\vb{E}]\;            &\mapsto              (\rank\vb{E}, d,\det{\vb{E}}(-d)) \\
[\lb{L}(d)]+(r-1)[\OO] &\mapsfrom            (r,d,\lb{L})
\end{align*}
Here, in the map from left to right, we have written \( d \) for the degree of the determinant line bundle of \( \vb{E} \).
Given this description, the endomorphisms \( F H^i \) of \( \K_0(C) \) may easily be computed directly. For the restrictions to the reduced K-group we obtain:
\begin{align}\label{eq:FHcurve}
  \im(FH^i)=
  \begin{cases}
       0                  & \subset{\rK_0(C)} \quad\text{when \( i \) is even }   \\
       2\Z \oplus \Jac(C) & \subset{\rK_0(C)} \quad\text{when \( i \) is odd}
    \end{cases}
\end{align}
We compute \( \rGW^0(C) \) as in Corollary~\ref{cor:etale-w2_Surface}:  as \( \W^3(C)\) is zero, \( \rK^0(C) \) surjects onto \( \rGW^3(C) \), hence the image of \( \rGW^3(C) \) under \(F\) coincides with the image of \(\rK^0(C)\) under \( FH^3 \), and hence by \eqref{eq:FHcurve} we obtain a short exact sequence
\begin{align*}
   0\rightarrow \Z/2 \longrightarrow \rGW^0(C) \longrightarrow \underset{=\,\Jac(C)[2]}{\rW^0(C)} \rightarrow 0
\end{align*}
Moreover, in this situation the sequence splits:  To define a splitting of the surjection, pick line bundles \( \lb{L} \) of order \( 2 \) defining additive generators \( e_{\lb{L}}:=(\lb{L},\id)-(\OO,\id) \) of \( \rW^0(C) \). We claim that the homomorphism obtained by lifting these to the corresponding elements in \( \GW^0(C) \) is well-defined, \ie that \( 2e_{\lb{L}} \) vanishes in \( \GW^0(C) \). Indeed, by Lemma~\ref{lem:HF} we have \( 2e_{\lb L}=H^0(\lb{L}-\OO) \), and \( H^0 \) vanishes on \( \lb{L}-\lb{O} \) since by \eqref{eq:FHcurve} the latter is contained in \( F(\rGW^3(X)) \).

Next, the image of \( \rGW^0(C) \) in \( \rK_0(C) \) is \( \Jac(C)[2] \). Since the quotient \( \Jac(C)/\Jac(C)[2] \) is isomorphic to \( \Jac(C) \), we obtain a short exact sequence
\begin{align*}
   0 \rightarrow \Z\oplus\Jac(C)\overset{H^1}\longrightarrow \rGW^1(C) \longrightarrow \underset{=\;\Z/2}{\rW^1(C)} \rightarrow 0
\end{align*}
Let \( \Psi_g \) be the \( 1 \)-symmetric complex given by \eqref{eq:generatorW1P1} or \eqref{eq:generatorW1C}.
As an element of \( \rK_0(C)=\Z\oplus\Jac(C) \), its underlying complex is equivalent to \( (1,\OO) \). Thus, by Lemma~\ref{lem:HF} again, we have \( H^1(1,\OO)=H^1(F(\Psi_g))=2\Psi_g \).
It follows that the sequence does not split and that \( \Psi_g \) descends to a generator of \( \rW^1(C) \). Thus, \( \rGW^1(C)=\Z\oplus\Jac(C) \).

Carrying on to compute \( \rGW^2(C) \), we first note that by \eqref{eq:FHcurve} the image of \( \rGW^1(C) \) in \( \rK^0(C) \) contains \( 2\Z\oplus\Jac(C) \). Moreover, \( F(\Psi)=\psi=(1,\OO) \), so in fact \( \rGW^1(C) \) surjects onto \( \rK_0(C) \). It follows that \( \rGW^2(C)=0 \). Finally, since \( \rW^3(C) \) also vanishes, we see that \( H^3 \) gives an isomorphism between \( \rK_0(C) \) and \( \rGW^3(C) \). This completes the computations in the untwisted case.

\paragraph{Twisted case.}
To compute the \GrothendieckWitt groups of \( C \) twisted by a generator \( \OO(p) \) of \( \Pic(C)/2 \) it again seems easiest to compare them with those of the affine curve \( C-p \), over which \( \OO(p) \) is trivial. More specifically, we will compare the respective Karoubi sequences via the following commutative diagram:
\begin{equation}\label{eq:Ctwist_vs_C-p}
\vcenter{\xymatrix@C=9pt{
   {\GW^{i-1}(C-p)}    \ar[r]^-{F}       & {\K_0(C-p)}\ar[r]^{H^i}                & {\GW^i(C-p)}     \ar[r]       & {\W^i(C-p)}     \ar[r]       & {0} \\
   {\GW^{i-1}(C,\OO(p))}\ar[r]^-{F}\ar[u] & {\K_0(C)}  \ar[r]^-{H^i_{\OO(p)}}\ar[u] & {\GW^i(C,\OO(p))} \ar[r]\ar[u] & {\W^i(C,\OO(p))} \ar[r]\ar[u] & {0}
}}
\end{equation}
The restriction map \( \K_0(C)\rightarrow\K_0(C-p) \) is the projection \( \Z\oplus\Z\oplus\Jac(C)\twoheadrightarrow \Z\oplus 0\oplus\Jac(C) \) killing the free summand corresponding to line bundles of non-zero degrees.
The twisted version of \eqref{eq:FHcurve} reads as follows:
\begin{align}\label{eq:FHcurvetwist}
  \im(FH_{\lb{O}(p)}^i)=
  \begin{cases}
        \Z\cdot(2,1)\oplus 0              & \subset \K_0(C) \quad\text{when \( i \) is even } \\
        \;\;0\oplus\;\;\Z \oplus \Jac(C)  & \subset \K_0(C) \quad\text{when \( i \) is odd }
  \end{cases}
\end{align}
Now consider \eqref{eq:Ctwist_vs_C-p} with \( i=0 \). Arguing as in the untwisted case, we may compute the cokernels of the forgetful maps \( F \) to reduce the diagram to a comparison of two short exact sequences:
\begin{equation*}
   \xymatrix{
   {0}\ar[r] & {\Z}\ar[r]^-{\mm{2\\0}} & {\Z\oplus\Jac(C)[2]} \ar[r] & {(\Z/2)\oplus\Jac(C)[2]} \ar[r]       & {0} \\
   {0}\ar[r] & {\Z}\ar[r]\ar@{=}[u]   & {\GW^0(C,\OO(p))}\ar[r]\ar[u] & {\Jac(C)[2]} \ar[r]\ar[u]^{\mm{0\\1}} & {0}
   }
\end{equation*}
Since the outer vertical maps are injective, so is the central
vertical map. It follows from a diagram chase that the image of a
generator of \( \K_0(C) \) in \( \GW^0(C,\OO(p)) \) cannot be
divisible by \( 2 \). Thus, the lower sequence must split, yielding \(
\GW^0(C,O(p))=\Z\oplus\Jac(C)[2] \).

Next, we see by \eqref{eq:FHcurvetwist} that the free summand in \( \GW^0(C,O(p)) \) has image \( \Z\cdot(2,1)\oplus 0 \) in \( \K_0(C) \). On \( \Jac(C)[2] \), on the other hand, \( F \) must be the inclusion \( \Jac(C)[2]\hookrightarrow\Jac(C) \), by comparison with the case of \( C-p \). This gives \( \GW^1(C,\OO(p))=\Z\oplus(\Jac(C)/\Jac(C)[2])\cong\Z\oplus\Jac(C) \).

The image of \( \GW^1(C,\OO(p)) \) in \( \K_0(C) \) may be read off directly from \eqref{eq:FHcurvetwist}, and we may deduce that \( \GW^2(C,\OO(p)) \) is a free abelian group generated by the twisted hyperbolic bundle \( H^2_{\OO(p)}(\OO) \). Equally directly, we see that \( \GW^3(C,\OO(p))=\Z\oplus \Jac(C) \). This completes the computation of all \GrothendieckWitt groups of \( C \).
\endproof

\encouragepagebreak{\subsectionair}
\subsection{KO-groups of curves}
Now suppose that \( C \) is a smooth curve defined over \( \C \). Its topological KO-groups may be computed from the Atiyah-Hirzebruch spectral sequence.
\begin{proposition}
Let \( C \) be a smooth projective complex curve of genus \( g \) as above. Then its KO-groups are given by
\begin{align*}
\KO^0(C) &= [\Z]\oplus(\Z/2)^{2g+1} &\KO^{-1}(C)&=[\Z/2]\oplus (\Z/2)^{2g} \\%&\KOK{0}{(C)} &= [\Z/2]\oplus(\Z/2)^{2g}\\
\KO^2(C) &= \Z                      &\KO^1(C)&=\Z^{2g}\oplus\Z/2           \\%&\KOK{2}{(C)} &= \Z/2       \\
\KO^4(C) &= [\Z]                    &\KO^3(C)&=0                           \\%&\KOK{4}{(C)} &= 0          \\
\KO^6(C) &= [\Z/2]\oplus\Z          &\KO^5(C)&=\Z^{2g}                       %&\KOK{6}{(C)} &= 0          \\
\end{align*}
Here, the square brackets again indicate which summands vanish when passing to reduced groups.
\qed
\end{proposition}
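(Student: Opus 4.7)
The plan is to apply the Atiyah--Hirzebruch spectral sequence for KO-theory, as the proposition itself suggests. Topologically, the complex points of a smooth projective curve of genus $g$ form a closed orientable surface, so the integral cohomology is concentrated in degrees $0,1,2$ with $H^0(C;\Z)=\Z$, $H^1(C;\Z)=\Z^{2g}$ and $H^2(C;\Z)=\Z$; the mod-$2$ cohomology is obtained by reduction. The $E_2$-page is $E_2^{p,q}=H^p(C;\KO^q(\mathrm{pt}))$ with $\KO^*(\mathrm{pt})$ given by the Bott-periodic pattern $\Z,0,0,0,\Z,0,\Z/2,\Z/2$ (starting in degree $0$ and continuing eight-periodically). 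Since $C$ has cohomological dimension two, the page is concentrated in the three columns $p=0,1,2$.

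The first step is to check that all differentials vanish. The higher differentials $d_r$ for $r\geq 3$ are zero on dimensional grounds: they would have to cross at least three columns. The only a priori nontrivial differential is $d_2$, which in the AHSS for $\KO$ is known to be $\Sq^2\circ r$ (mod-$2$ reduction followed by the Steenrod square). On a Riemann surface $\Sq^2$ is identically zero: it vanishes on $H^0(C;\Z/2)$ because $\Sq^i$ acts as zero on classes of degree less than $i$, and it vanishes on $H^1(C;\Z/2)$ because its target $H^3(C;\Z/2)$ is zero. The groups $H^2(C;\Z/2)$ are themselves terminal, so no outgoing $d_2$ from column $2$ is possible. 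Hence $E_\infty=E_2$.

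The remainder is reading off each $\KO^n(C)$ and disposing of extensions. For each $n$ one tabulates the three surviving groups $E_\infty^{p,n-p}$ with $p=0,1,2$; inserting the integer and $\Z/2$ coefficients of $\KO$ reproduces the eight groups listed in the proposition. I expect the main (mild) obstacle to be settling the extension problems, but these all turn out to be trivial: whenever a filtration on $\KO^n(C)$ has successive quotients $\Z$ and $\Z/2$, the relevant Ext groups $\mathrm{Ext}^1_\Z(\text{free},-)=0$ or the quotient is itself free, so the filtration splits. Thus all computations reduce to adding up the entries of the three nonvanishing columns of $E_2$.

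Finally, to identify which summands survive in reduced K-theory, I would simply observe that passing to $\rKO^*(C)$ kills the image of the pullback along $C\to\mathrm{pt}$; this eliminates precisely the $H^0$-summand $\KO^n(\mathrm{pt})$, explaining the bracketed summands in the statement and concluding the proof.
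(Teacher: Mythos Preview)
Your approach via the Atiyah--Hirzebruch spectral sequence is exactly what the paper has in mind, and your treatment of the differentials is correct: $d_2$ vanishes because $\Sq^2$ is zero on a two-dimensional complex, and higher differentials vanish for dimensional reasons.

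The gap is in your handling of the extension problems. Your claim that ``the relevant Ext groups $\mathrm{Ext}^1_\Z(\text{free},-)=0$ or the quotient is itself free'' does not cover all cases. Two extensions are not of this form. For $\KO^6(C)$, the graded pieces are $\Z$ (at $p=2$) and $\Z/2$ (at $p=0$), giving an extension $0\to\Z\to\KO^6(C)\to\Z/2\to 0$ with $\mathrm{Ext}^1(\Z/2,\Z)=\Z/2$; this one is saved by the basepoint splitting you invoke at the end, since the $\Z/2$ is precisely $\KO^6(\mathrm{pt})$. But for $\KO^0(C)$, after splitting off $\KO^0(\mathrm{pt})=\Z$ you are left with
\[
0 \longrightarrow \Z/2 \longrightarrow \rKO^0(C) \longrightarrow (\Z/2)^{2g} \longrightarrow 0,
\]
where neither end is free and $\mathrm{Ext}^1((\Z/2)^{2g},\Z/2)\neq 0$. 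Nothing you have written rules out a $\Z/4$ summand here.

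The cleanest fix is to observe that a closed orientable surface of genus $g$ splits stably as $S^2\vee\bigvee^{2g}S^1$: the attaching map of the top cell is a product of commutators $\prod_i[a_i,b_i]$, which becomes null-homotopic after one suspension. Hence $\rKO^n(C)\cong\rKO^n(S^2)\oplus\bigoplus^{2g}\rKO^n(S^1)$ for all $n$, and every extension splits at once. Alternatively, for $\KO^0$ specifically, you can split the surjection onto $H^1(C;\Z/2)$ by sending $\alpha$ to $[L_\alpha]-1$ for the real line bundle $L_\alpha$ with $w_1=\alpha$; one checks $L_\alpha\oplus L_\alpha$ is trivial (its complexification has $c_1=\beta(\alpha)=0$ since $H^2(C;\Z)$ is torsion-free), so these classes have order two.
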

In Section~\ref{sec:CS:Surfaces:KOK}, we will explain in detail how the values of the groups \( \tKOK{2i}{(C)} \) may also be obtained from the spectral sequence. They may be read off as special cases from Proposition~\ref{prop:KOK_Surfaces}. In fact, we can also read off the values of the twisted groups \( \tKOK{2i}{(C;\OO(p))} \) by applying the same proposition to the Thom space \( \Thom_C(\OO(p)) \) (see \cite{Me:WCCV}*{Section~2.1}). We then find that these groups agree with the corresponding Witt groups in Theorem~\ref{thm:W_Curve}:
\begin{align*}
\tKOK{0}{(C)} &= [\Z/2]\oplus(\Z/2)^{2g} &\tKOK{0}{(C,\OO(p))} &= (\Z/2)^{2g}    \\
\tKOK{2}{(C)} &= \Z/2                    &\tKOK{2}{(C,\OO(p))} &= 0  \\
\tKOK{4}{(C)} &= 0                       &\tKOK{4}{(C,\OO(p))} &= 0  \\
\tKOK{6}{(C)} &= 0                       &\tKOK{6}{(C,\OO(p))} &= 0  \\
\end{align*}

\encouragepagebreak{\sectionair}
\section{Surfaces}\label{sec:CS:Surfaces}
This section contains the computations of the Witt groups of smooth curves and surfaces. We also compute the corresponding topological groups \( \tKOK{2i}{} \). The comparison of the results is postponed until the next section.

\subsection{Witt groups of surfaces}\label{sec:CS:Surfaces:W}
Consider the algebraic K-group \( \K_0(X) \) of a smooth variety \( X \), and let \( c_i \) be the Chern classes \( c_i\colon\K_0(X)\rightarrow\CH^i(X) \) with values in the Chow groups of \( X \). Filter \( \K_0(X) \) by \( \K_0(X)\supset\rK_0(X)\supset\ker(c_1) \). If \( X \) has dimension at most two, then the map \( (\rank,c_1,c_2) \) induces an isomorphism \cite{Fulton:Intersection}*{Example 15.3.6}:
\begin{equation}\label{eq:grKalg}
 \gr^*(\K_0(X))\cong \Z\oplus\CH^1(X)\oplus\CH^2(X)
\end{equation}
A similar statement holds for the Witt group.
\begin{theorem}\label{thm:W_Surface}
Let \(X\) be a smooth variety over a field \(k\) of characteristic not two, such that the function field of \(X\) has \(2\)-cohomological dimension \(\cd_2(k(X))\leq 2\).  Filter \( \W^0(X) \) by \( \W^0(X)\supset \rW^0(X) \supset \ker(\bar{w}_1) \). Then the map \( (\rank,\bar{w}_1,\bar{w}_2) \) gives an isomorphism
\begin{align*}
 \gr^*(\W^0(X)) &\cong \Z/2 \oplus H^1_{\et}(X;\Z/2) \oplus \Big(\quotient{H^2_{\et}(X;\Z/2)}{\Pic(X)}\Big)
\intertext{Moreover, if we write \( S^1 \) for the squaring map
               \( \CH^1(X)/2 \rightarrow \CH^2(X)/2 \),
then the shifted Witt groups are as follows:}
  \W^1(X)      &\cong \ker(S^1)\oplus H^3_{\et}(X;\Z/2)\\
  \W^2(X)      &\cong \coker(S^1)\\
  \W^3(X)      &= 0
\end{align*}
\end{theorem}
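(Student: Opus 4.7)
The plan is to follow Totaro's route via the Pardon/Balmer--Walter Gersten--Witt spectral sequence, which converges to $\W^*(X)$ from an $E_1$-page built out of the classical Witt groups of the residue fields $k(x)$ at points $x$ of $X$. For $\dim X \leq 2$ this spectral sequence has only three columns, so the analysis reduces to that of a length-three Gersten--Witt complex, together with a small number of higher differentials.

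To compute the cohomology of this complex, the idea is to filter each $\W(k(x))$ by powers of the fundamental ideal $I(k(x))$. Since $k$ is algebraically closed, the residue field $k(x)$ at a codimension-$p$ point has cohomological dimension at most $2-p$, so the filtration terminates. Merkurjev's theorem (the low-degree part of Milnor's conjecture, which is all that is needed here) identifies the associated graded pieces with the mod-$2$ étale cohomology groups $I^n(k(x))/I^{n+1}(k(x)) \cong H^n_{\et}(k(x);\Z/2)$ for $n\leq 2$. The resulting spectral sequence of a filtered complex has $E_1$-page a collection of Gersten complexes for the étale cohomology sheaves $\sheaf{H}^n_{\et}(\Z/2)$; by the Bloch--Ogus theorem, the cohomology of these complexes is $H^p_{\Zar}(X,\sheaf{H}^n_{\et}(\Z/2))$. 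Bloch's formula then identifies the diagonal terms $H^p(X,\sheaf{H}^p_{\et}(\Z/2)) = \CH^p(X)/2$, while the off-diagonal groups $H^0(X,\sheaf{H}^n_{\et}(\Z/2))$ and $H^1(X,\sheaf{H}^2_{\et}(\Z/2))$ can be read off from the Bloch--Ogus spectral sequence for étale cohomology, which is very sparse over an algebraically closed base.

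The heart of the argument lies in identifying the remaining differentials. I expect the only surviving higher differential to be the squaring map $S^1\colon \CH^1(X)/2 \rightarrow \CH^2(X)/2$. That some squaring-type operation must appear is suggested directly by the formulas \eqref{eq:dt:1} and \eqref{eq:dt:2} for the Stiefel--Whitney classes of a metabolic bundle: over an algebraically closed field the $(-1,-1)$-term vanishes, but the Chern-class term in $w_2$ survives and produces a relation of the form $c\mapsto c\cdot c$ on $\Pic/2$. Matching this explicit formula to the differential of the spectral sequence is the step I expect to be the main obstacle.

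Granted the $S^1$-identification, the theorem follows by assembly. The graded pieces of $\W^0(X)$ are $\Z/2$ (from rank), $H^1_{\et}(X;\Z/2)$ (from $\bar w_1$) and $H^2_{\et}(X;\Z/2)/\Pic(X)$ (from $\bar w_2$, the $\Pic(X)$-quotient arising because the image of $\CH^1/2$ in $H^2_{\et}$ is precisely the image of $c_1$); the theorem claims only a graded-piece description, so no extension problem needs to be solved there. $\W^1(X)$ combines $\ker(S^1)$ with the $H^3_{\et}(X;\Z/2)$-contribution from $H^1(X,\sheaf{H}^2_{\et}(\Z/2))$; splitting of this extension should follow from the fact that the two summands sit in different filtration levels. $\W^2(X)$ is exactly $\coker(S^1)$. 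Finally, $\W^3(X)=0$ because no term of the spectral sequence survives in the relevant total degree.
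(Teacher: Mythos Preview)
Your overall strategy coincides with the paper's: Gersten--Witt collapses to identify $\W^i(X)$ with $H^i(X,\sheaf{W})$, Pardon's filtered complex gives a spectral sequence with $E_2$-page $H^s(X,\sheaf{H}^t)$, and Bloch--Ogus (which collapses here) identifies these entries. So the framework is right.

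There is, however, a genuine gap. On the $E_2$-page of Pardon's spectral sequence there are \emph{two} potentially nonzero differentials, not one: besides $S^1\colon \CH^1(X)/2\to\CH^2(X)/2$ there is also
\[
d_2\colon H^1_{\et}(X;\Z/2)\;\longrightarrow\; H^3_{\et}(X;\Z/2),
\]
and you have given no argument for its vanishing. If this $d_2$ were nonzero, both $\gr_1\W^0(X)$ and $\W^1(X)$ would be smaller than claimed. The paper kills it as follows: it first proves a lemma identifying the edge homomorphism $E_\infty^{0,t}\hookrightarrow E_2^{0,t}$ with the Stiefel--Whitney class $\bar w_t$ for $t=1,2$ (this uses that Milnor's isomorphisms $e^1,e^2$ \emph{are} Stiefel--Whitney classes in those degrees, together with naturality under restriction to the generic point). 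Since $\bar w_1$ is visibly surjective onto $H^1_{\et}(X;\Z/2)$ (every class is the $w_1$ of a symmetric line bundle), the entry $E_2^{0,1}$ already equals $E_\infty^{0,1}$, and the outgoing $d_2$ must vanish. This same edge-map lemma is also what justifies your assertion that the filtration on $\W^0(X)$ is given by $\ker(\bar w_1)$ and that the graded pieces are detected by $(\rank,\bar w_1,\bar w_2)$; without it you only know the abstract graded pieces, not that the Stiefel--Whitney classes realise the isomorphism.

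Two smaller points. First, the identification of the diagonal $d_2$ with $S^1$ is not, as you suggest, the main obstacle: the paper simply quotes this from Totaro, and your heuristic via the metabolic formula~\eqref{eq:dt:2} is not quite on target (that formula computes $w_2$ of a hyperbolic in $\W^0$, whereas $S^1$ is a differential landing in the column computing $\W^2$). Second, the splitting of $\W^1(X)$ into $\ker(S^1)\oplus H^3_{\et}(X;\Z/2)$ does not follow from the summands sitting in different filtration levels (that only gives an extension); it follows because both pieces are $\Z/2$-vector spaces, so every short exact sequence splits.
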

\begin{remark}\label{rem:cd-assumptions}
As \(\cd_2(k(X))\leq \dim(X) + \cd_2(k)\), the condition of the theorem is satisfied in particular by surfaces over fields of \(2\)-cohomological dimension zero and by curves over fields of \(2\)-cohomological dimension at most one.  The condition \mbox{\(\cd_2(k)=0\)} is equivalent to \(k\) being hereditarily quadratically closed \cite{ElmanWadsworth}*{Lemma~2}.  Fields with \mbox{\(\cd_2(k)=1\)} include for example finite fields and function fields over algebraically closed fields.  Note that these fields have non-trivial Witt groups.
\end{remark}
\begin{example}\label{eg:projective-surface}
Suppose \( X \) is a smooth complex projective surface of Picard number \( \rho \). Write \( b_i \) for the Betti numbers of \( X \) and \( \nu \) for the rank of \( H^2(X;\Z)[2] \). Then the above result shows that the Witt groups of \( X \) are as follows:
\begin{align*}
  \W^0(X)  &= [\Z/2]\oplus (\Z/2)^{b_1+b_2-\rho + 2\nu}\\
  \W^1(X)  &= \begin{cases}
                (\Z/2)^{b_1+\rho + 2\nu} & \text{ if } S^1 = 0 \\
                (\Z/2)^{b_1+\rho + 2\nu - 1} & \text{ if } S^1 \neq 0
               \end{cases}\\
  \W^2(X)  &= \begin{cases}
                \Z/2  \quad\quad\quad\quad\quad\quad  & \text{ if } S^1 = 0 \\
                0                                     & \text{ if } S^1 \neq 0
               \end{cases}\\
  \W^3(X)  &= 0
\end{align*}
\end{example}
Our computation will follow a route described in \cite{Totaro:Witt}. Namely, there is a subtle relationship between Witt groups and {\'e}tale cohomology groups with \( \Z/2 \)-coefficients, encoded by three spectral sequences:
\begin{align*}
\label{ss:BO}\tag{BO} E_{2,\text{BO}}^{s,t}(X) &=
  H^s(X,\sheaf{H}^t)\quad\Rightarrow\quad H^{s+t}_{\et}(X;\Z/2)\\
\label{ss:P}\tag{P}   E_{2,\text{Par}}^{s,t}(X) &=
  H^s(X,\sheaf{H}^t)\quad\Rightarrow\quad H^s(X,\sheaf{W})\\
\label{ss:GW}\tag{GW} E_{2,\text{GW}}^{s,t}(X) &=
  H^s(X,\sheaf{W}^t)\quad\Rightarrow\quad \W^{s+t}(X)
\end{align*}
Here, \( \sheaf{H}^t \) is the Zariski sheaf attached to the presheaf that sends an open subset \( U\subset X \) to \( H^t_{\et}(U;\Z/2) \). Similarly, \( \sheaf{W} \) denotes the Zariski sheaf attached to the presheaf sending \( U \) to \( \W^0(U) \), and we set
\begin{equation*}
 \sheaf{W}^t=\begin{cases}
               \sheaf{W} & \text{for \( t\equiv 0 \mod 4 \)}\\
               0         & \text{otherwise}
             \end{cases}
\end{equation*}
Before specializing to the case of a surface, we give a short discussion of each of these spectral sequences for an arbitrary dimensional variety \( X \). For simplicity, we assume that \( X \) is a smooth variety over a field of characteristic not two throughout. More general statements may be found in the references.

\paragraph{The Bloch-Ogus spectral sequence (BO).} The first spectral sequence is the well-known Bloch-Ogus spectral sequence \cite{BlochOgus}. This is a first quadrant spectral sequence with differentials in the usual directions, \ie of bidegree \( (r,-r+1) \) on the \( r^{\text{th}} \) page. The \( E_2 \)-page is concentrated above the main diagonal \( s=t \), and the groups along the diagonal may be identified with the Chow groups of \( X \) modulo two \cite{BlochOgus}*{Corollary~6.2 and proof of Theorem~7.7}:
\shortendisplayskip
\begin{equation}\label{eq:Hs-of-Hs_Chow}
  H^s(X,\sheaf{H}^s)\cong \CH^s(X)/2
\end{equation}
The exactness of the Gersten complex for ^^e9tale cohomology [12, \S~2.1, Thm.~4.2] implies that the sheaves \(\sheaf H^t\) vanish for all \(t>\cd_2(k(X))\).  Thus, the spectral sequence is concentrated in rows \( 0\leq t\leq \cd_2(k(X)) \).

\begin{figure}[!b]
\begin{equation*}
 \newcommand{\Legende}{\xymatrix{a\ar[r]&b}}
 \newcommand{\Plathalter}{\phantom{wwww}\cdots\phantom{wwww}}
 \xymatrix@C=1pt@R=0pt{
 && \\
 && {\Plathalter}               && {\Plathalter}        && {\Plathalter}   && {\Plathalter}  & \\
 && && && && & \\
 && {H^0(X,\sheaf{H}^3)}\ar[rruu]\ar@{--}[ruu] && {H^1(X,\sheaf{H}^3)}\ar[rruu]\ar@{--}[ruu]    && {H^2(X,\sheaf{H}^3)}\ar[rruu]\ar@{--}[ruu] && {\CH^3(X)/2}\ar@{-}[ru]\ar@{--}[ruu]  & \\
 \\
 && {H^0(X,\sheaf{H}^2)}\ar[rruu]\ar@{-->}[rruuuu] && {H^1(X,\sheaf{H}^2)}\ar[rruu]\ar@{-->}[rruuuu]    && {\CH^2(X)/2}\ar[rruu]\ar@{-->}[rruuuu] && {\cdot}  \\
 \\
 && {H^0(X,\sheaf{H}^1)}\ar[rruu]\ar@{-->}[rruuuu]\ar@{..>}[rruuuuuu] && {\CH^1(X)/2}\ar[rruu]\ar@{-->}[rruuuu]\ar@{..>}[rruuuuuu]    && {\cdot}   && {\cdot}  \\
 \\
 && {\CH^0(X)/2}\ar[rruu]|(.55){d_2}\ar@{-->}[rruuuu]|(.46){d_3}\ar@{..>}[rruuuuuu]|(.35){d_4} && {\cdot}     && {\cdot}   && {\cdot}  \\
 \ar[rrrrrrrrr]_>{s} &&&&&&&&&\\
 &\ar[uuuuuuuuuuu]^>{t}
 }
\end{equation*}
\caption{Pardon's spectral sequence. The entries below the diagonal are zero.}
\label{fig:PSS}
\end{figure}

\paragraph{Pardon's spectral sequence (P).} Pardon's spectral sequence can be indexed to have the same \( E_2 \)-page as the Bloch-Ogus spectral sequence. The differentials on the \( r^{th} \) page then have bidegree \( (1,r-1) \), as illustrated in Figure~\ref{fig:PSS}. In \cite{Totaro:Witt}, Totaro shows that, under the identifications \eqref{eq:Hs-of-Hs_Chow},
the differential \( d_2 \) on the main diagonal of the \( E_2 \)-page corresponds to the Steenrod operation \( S^1\colon{\CH^s(X)/2\rightarrow\CH^{s+1}(X)/2} \),
as defined by Brosnan and Voevodsky \citelist{\cite{Brosnan:Steenrod}\cite{Voevodsky:RPO}}.

The sequence converges to the cohomology of \( X \) with respect to \( \sheaf{W} \)  in the usual sense that the \( i^{\text{th}} \) column of the \( E_{\infty} \)-page is isomorphic to the associated graded module of \( H^i(X,\sheaf{W}) \) with respect to some filtration. In order to describe the filtration on \( H^0(X,\sheaf{W})=\sheaf{W}(X) \), we briefly summarize how the spectral sequence arises.

\newcommand{\complex}[1]{\mathbb{#1}}
Let \( X \) be as above, and let \( K \) be its function field. We denote the residue field of a scheme-theoretic point \( x \) of \( X \) by \( k(x) \). In \cite{Pardon}, Pardon shows that the sheaf \( \sheaf{W} \) has a flasque resolution by a Gersten-Witt complex \( \complex{W} \) of the form
\begin{equation}\label{eq:GWcomplex}
 \complex{W}\colon{}\quad\W^0(K)\rightarrow\coprod_{x\in X^{(1)}}\W^0(k(x))\rightarrow\cdots\rightarrow\coprod_{x\in X^{(n)}}\W^0(k(x))\rightarrow 0
\end{equation}
Here, \( X^{(i)} \) denotes the set of codimension \( i \) points of \( X \), and \( \W^0(k(x)) \) is to be viewed as a constant sheaf supported on the closure of \(  x  \) in \( X \).
In particular, \( \sheaf{W} \) is a subsheaf of the constant sheaf \( \W^0(K) \). Let \( I(K)\subset \W^0(K) \) be the fundamental ideal of \( K \) (see Section~\ref{sec:CS:SW}). The fundamental filtration on \( \W^0(K) \), given by the powers of \( I(K) \), induces a filtration on \( \sheaf{W} \) which we denote by
\begin{equation}\label{eq:def:sheafI}
   \sheaf{I}_t:=\sheaf{W}\cap I^t(K)
\end{equation}
Pardon shows more generally that the fundamental filtrations of the Witt groups \( \W^0(k(x)) \) give rise to flasque resolutions of the sheaves \( \sheaf{I}_t \) of the form
\begin{align*}
 \mathbb{I}_t\colon{}\quad I^t(K)\rightarrow \coprod_{x\in X^{(1)}}I^{t-1}(k(x)) \rightarrow
 \coprod_{x\in X^{(2)}}I^{t-2}(k(x))\rightarrow \dots
\end{align*}
Applying the standard construction of the spectral sequence of a filtered complex of abelian groups \cite{GelfandManin}*{Chapter~III.7, Section~5} to the filtration of \( \complex{W}(X) \) by \( \complex{I}_t(X) \), we obtain a spectral sequence
\begin{equation*}
H^s(X,\sheaf{I}_t/\sheaf{I}_{t+1}) \Rightarrow H^s(X,\sheaf{W})
\end{equation*}
To conclude, Pardon uses the affirmation of the Milnor conjectures (see \eqref{eq:Milnors-isos}) to identify the sheaves \( \sheaf{I}_t/\sheaf{I}_{t+1} \) with the sheaves \( \sheaf{H}^t \). The following lemma is a direct consequence of this construction.
\begin{lemma}\label{lem:Pardons_E_infty}
 The filtration on \( \sheaf{W}(X) \) appearing on the zeroth column of the \( E_{\infty} \)-page of Pardon's spectral sequence is given by the global sections of the sheaves \( \sheaf{I}_t \) defined in \eqref{eq:def:sheafI}:
 \begin{equation*}\clubpenalty10000%
 E_{\infty,\text{Par}}^{0,t}(X)\cong \sheaf{I}_t(X)/\sheaf{I}_{t+1}(X)
 \end{equation*}
The edge homomorphism including this column into the zeroth column of the \( E_2 \)-page is induced by the sheafification map from the presheaf quotient of \( \sheaf{I}_t \) by \( \sheaf{I}_{t+1} \) to the quotient sheaf \( \sheaf{I}_t/\sheaf{I}_{t+1} \).
\end{lemma}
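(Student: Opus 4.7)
The plan is to unwind the standard construction of the spectral sequence of a filtered complex as applied to the filtration
\begin{equation*}
\mathbb{W}(X) \supset \mathbb{I}_1(X) \supset \mathbb{I}_2(X) \supset \cdots
\end{equation*}
of the Gersten-Witt complex. By general theory the $t$-th graded piece of the filtration induced on $H^0(\mathbb{W}(X))$ coincides with $E_\infty^{0,t}$, so the task reduces to identifying this induced filtration explicitly and then tracing its graded pieces through to $E_2^{0,t}$ via the edge map.

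Since each of $\mathbb{W}$ and $\mathbb{I}_t$ is a flasque resolution (of $\sheaf{W}$ and $\sheaf{I}_t$ respectively), taking global sections gives $H^0(\mathbb{W}(X)) = \sheaf{W}(X)$ and $H^0(\mathbb{I}_t(X)) = \sheaf{I}_t(X)$. The filtration induced on $H^0$ by the filtered complex is the image filtration
\begin{equation*}
F^t H^0(\mathbb{W}(X)) = \mathrm{image}\bigl(H^0(\mathbb{I}_t(X)) \to H^0(\mathbb{W}(X))\bigr).
\end{equation*}
By \eqref{eq:def:sheafI}, $\sheaf{I}_t$ is a subsheaf of $\sheaf{W}$, so this map is simply the inclusion $\sheaf{I}_t(X) \hookrightarrow \sheaf{W}(X)$. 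Hence $F^t H^0(\mathbb{W}(X)) = \sheaf{I}_t(X)$ and $E_\infty^{0,t} \cong \sheaf{I}_t(X)/\sheaf{I}_{t+1}(X)$, which is the first assertion.

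For the edge homomorphism, I would track a representative through the pages of the spectral sequence. A class in $E_\infty^{0,t} = \sheaf{I}_t(X)/\sheaf{I}_{t+1}(X)$ lifts to some $f \in \sheaf{I}_t(X) \subset \mathbb{I}_t(X)^0$, whose image at $E_0^{0,t}$ is its class modulo $\mathbb{I}_{t+1}(X)^0$. Because $f$ is a global section of $\sheaf{I}_t$ we have $df = 0$ in $\mathbb{W}(X)$ and hence also in every quotient, so $f$ is a cycle for all the differentials and descends unchanged through the spectral sequence. Since $\mathbb{I}_t/\mathbb{I}_{t+1}$ is a flasque resolution of $\sheaf{I}_t/\sheaf{I}_{t+1} \cong \sheaf{H}^t$, the degree-zero cohomology of $\mathbb{I}_t(X)/\mathbb{I}_{t+1}(X)$ is $(\sheaf{I}_t/\sheaf{I}_{t+1})(X) = \sheaf{H}^t(X) = E_2^{0,t}$, and under this identification the image of $f$ is precisely the composition $\sheaf{I}_t(X) \twoheadrightarrow \sheaf{I}_t(X)/\sheaf{I}_{t+1}(X) \to (\sheaf{I}_t/\sheaf{I}_{t+1})(X)$, the second arrow being the sheafification map. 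I expect the only real obstacle to be bookkeeping: one must match the quotient of Pardon's flasque resolutions with his identification $\sheaf{I}_t/\sheaf{I}_{t+1} \cong \sheaf{H}^t$ and verify that the standard formulas for the edge map of a filtered complex take the concrete form claimed. No deeper input beyond Pardon's construction and the formal machinery of filtered complexes is required.
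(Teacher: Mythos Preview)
Your proposal is correct and matches the paper's approach: the paper does not give a separate proof of this lemma but simply states it as ``a direct consequence of this construction,'' referring to the standard spectral sequence of a filtered complex applied to the filtration of $\mathbb{W}(X)$ by the $\mathbb{I}_t(X)$. Your write-up is precisely the unpacking of that phrase---identifying the induced filtration on $H^0(\mathbb{W}(X))=\sheaf{W}(X)$ with the images $\sheaf{I}_t(X)$ and tracing the edge map through the graded quotients---and requires no input beyond what the paper already records.
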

The group \( \sheaf{W}(X) \), viewed as a subgroup of \( \W^0(K) \), is commonly referred to as the unramified Witt group of \(  X  \).

\paragraph{The Gersten-Witt spectral sequence (GW).}
The Gersten-Witt spectral sequence of Balmer and Walter, third on the list above, may be thought of as an analogue of the Bloch-Ogus spectral sequence for Witt groups. In \cite{BalmerWalter:GWSS}, Balmer and Walter, too, construct  a Gersten-Witt complex of the form \eqref{eq:GWcomplex}. Although it seems likely that their construction agrees with Pardon's, this does not yet seem to have been established (\cf \cite{Gille:gradedGW}*{Introduction}).  It is known in any case, however, that their complex also provides a flasque resolution of \( \sheaf{W} \) \cite{BGPW:Gersten}*{Lemma~4.2 and Theorem~6.1}. %\cite{Balmer:Witt-cohomology-uvm}*{Theorem~4.3}).
 The spectral sequence constructed in \cite{BalmerWalter:GWSS} may therefore be rewritten in the form above:
\begin{theorem}[\cite{BalmerWalter:GWSS}]
 Let \( X \) be a smooth variety over a field of characteristic not two. There is a spectral sequence with \( E_2 \)-page
 \begin{equation*}
  E_2^{s,t}=\begin{cases}
             H^s(\sheaf{W}) & \text{ in rows } t\equiv 0\mod 4 \\
             0              & \text{ elsewhere }
            \end{cases}
 \end{equation*}
 The sequence has differentials \( d_r \) of bidegree \( (r,-r+1) \) and converges to \( \W^{s+t}(X) \).
\end{theorem}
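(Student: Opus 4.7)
The plan is to reproduce the construction of Balmer and Walter, which is a hermitian analogue of the Brown--Gersten--Quillen spectral sequence for algebraic K-theory, transferred to triangulated categories with duality. The starting point is the bounded derived category $\Db(X)$ of vector bundles on $X$, equipped with its (possibly $\lb{L}$-twisted) duality. First one defines a decreasing filtration of $\Db(X)$ by the triangulated subcategories $\Db_{(p)}(X)$ of complexes whose cohomology sheaves have support of codimension at least $p$:
\begin{equation*}
 \Db(X)=\Db_{(0)}(X)\supset\Db_{(1)}(X)\supset\cdots\supset\Db_{(\dim X+1)}(X)=0.
\end{equation*}
Each $\Db_{(p)}(X)$ is stable under the duality, so Balmer's formalism produces a long exact localization sequence for every inclusion $\Db_{(p+1)}(X)\hookrightarrow\Db_{(p)}(X)$.

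Assembling these long exact sequences into an exact couple in the standard way gives a cohomologically-graded spectral sequence whose $E_1$-term is
\begin{equation*}
 E_1^{s,t}=\W^{s+t}\bigl(\Db_{(s)}(X)/\Db_{(s+1)}(X)\bigr),
\end{equation*}
with $d_1$ induced by the connecting homomorphisms and, more generally, $d_r$ of bidegree $(r,-r+1)$. Convergence to $\W^{s+t}(X)$ is automatic from the fact that the filtration is finite. The key step is a dévissage argument identifying the quotient category, as a triangulated category with duality, with the orthogonal coproduct over codimension-$s$ points $x\in X^{(s)}$ of derived categories of finite-length modules over the local rings $\OO_{X,x}$. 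A Koszul-type local calculation shows that, under this identification, the ambient duality restricts to an $s$-shifted duality on each local piece (this is the hermitian shadow of local duality), so the associated Witt groups are $\W^{(s+t)-s}(k(x))=\W^{t}(k(x))$. Since $\W^{i}$ of a field vanishes outside $i\equiv 0\pmod 4$, the $E_1$-page is concentrated in rows $t\equiv 0\pmod 4$, where it reads
\begin{equation*}
 \W^0(K)\rightarrow\coprod_{x\in X^{(1)}}\W^0(k(x))\rightarrow\coprod_{x\in X^{(2)}}\W^0(k(x))\rightarrow\cdots
\end{equation*}
\emph{i.e.}\ precisely the Balmer--Walter Gersten--Witt complex.

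To rewrite the $E_2$-page as $H^s(X,\sheaf{W})$, I would invoke the theorem of Balmer--Gille--Panin--Walter already cited in the text: the Gersten--Witt complex, regarded as a complex of Zariski sheaves, is a flasque resolution of $\sheaf{W}$. Taking global sections of this resolution computes $H^s(X,\sheaf{W})$, which is exactly the horizontal cohomology of the $E_1$-line above. Four-periodicity of $\W^\ast$ then propagates the vertical periodicity to the abutment.

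The main obstacle is the dévissage step. Extending Quillen's classical dévissage to the duality-enriched setting is delicate: one must simultaneously split the quotient category into contributions from individual codimension-$s$ points \emph{and} verify that the ambient twisted duality restricts, on each local piece, to the Koszul-shifted duality responsible for the $s$-shift in the grading. Once this compatibility is in place, the identification of the $d_1$-differentials with the second-residue maps in Pardon's Gersten--Witt complex is forced by the axiomatic characterization of residues for symmetric bilinear forms, and the identification of $E_2$ with sheaf cohomology is then immediate from the flasque resolution result.
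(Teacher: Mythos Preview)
Your proposal is a correct sketch of the Balmer--Walter construction, and in fact goes well beyond what the paper does. The paper does not prove this theorem at all: it is stated with a citation to \cite{BalmerWalter:GWSS} and used as a black box. The only argument the paper supplies is the single sentence preceding the theorem, which observes that the Balmer--Walter Gersten--Witt complex is known (by \cite{BGPW:Gersten}) to be a flasque resolution of \(\sheaf{W}\), so that the \(E_2\)-page of the original Balmer--Walter spectral sequence may be rewritten as \(H^s(X,\sheaf{W})\).

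Your outline --- the codimension-of-support filtration on \(\Db(X)\), the exact couple, the d\'evissage identification of the successive quotients with local pieces carrying a shifted duality, and the appeal to the flasque resolution result for the \(E_2\)-identification --- is exactly the content of the cited references. The one point you flag as delicate (d\'evissage with duality and the Koszul shift) is indeed the technical heart of \cite{BalmerWalter:GWSS}, but since the present paper treats the whole spectral sequence as an imported result, there is nothing to compare your argument against beyond the citation itself.
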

\proof[Proof of Theorem~\ref{thm:W_Surface}]
We now specialize to the case when \(\cd_2(k(X))\leq 2\).  Then the Bloch-Ogus spectral sequence \eqref{ss:BO} collapses immediately and we obtain descriptions of the groups \( H^s(X;\sheaf{H}^t) \) in terms of the {\'e}tale cohomology of \( X \) with \( \Z/2 \)-coefficients.

The \( E_2 \)-page of Pardon's spectral sequence therefore takes the following form:
\begin{equation*}
 \newcommand{\Plathalter}{\phantom{wwwww}\phantom{\cdot}\phantom{wwwww}}
 \xymatrix@C=1pt@R=0pt{
 && {\Plathalter}                           && {\Plathalter}        && {\Plathalter}       &\\
 && {\quotient{H^2_{\et}(X;\Z/2)}{\Pic(X)}} && {H^3_{\et}(X;\Z/2)}  && {H^4_{\et}(X;\Z/2)} &\\
 &&                                         &&                      && {}\save[]+<0.3cm,0.1cm>*{\cong \CH^2(X)/2} \restore & \\
 && {H^1_{\et}(X;\Z/2)}\ar[rruu]|{d_2}      && {\Pic(X)/2}\ar[rruu]|{S^1} && {\cdot}   &  \\
 \\
 && {\Z/2}                                  && {\cdot}              && {\cdot}   &  \\
 \ar[rrrrrrr]_>{s} &&&&&&&\\
 &\ar[uuuuuuu]^>{t}
 }
\end{equation*}
On the other hand, we see that the Gersten-Witt spectral sequence also collapses, so that \( \W^i(X)\cong H^i(X,\sheaf{W}) \). Thus, the \( i^{\text{th}} \) column of Pardon's spectral sequence simply converges to \( \W^i(X) \). In particular, if we write \( I_t(X) \) for the filtration on \( \W^0(X) \) corresponding to the filtration \( \sheaf{I}_t(X) \) of \( \sheaf{W}(X) \) defined in \eqref{eq:def:sheafI}, then by Lemma~\ref{lem:Pardons_E_infty} we have \( E_{\infty, \text{Par}}^{0,t}\cong \gr_t^{I}\W^0(X) \).

\begin{lemma}
Let \( X \) be as above. Then, for \( t=1 \) or \( 2 \), the edge homomorphisms
\[ \gr_t^{I}\W^0(X)\cong E_{\infty, \text{Par}}^{0,t}\hookrightarrow E_{2, \text{Par}}^{0,t} \] can be identified with the Stiefel-Whitney classes \( \bar{w}_t \). More precisely, \( I_1(X)=\rW^0(X) \), \( I_2(X)=\ker(\bar{w}_1) \), and we have commutative diagrams
\begin{align*}
\xymatrix@R=6pt{
   {E_{\infty, \text{Par}}^{0,1}} \ar@{^{(}->}[r]\ar@{=}[d] & {E_{2, \text{Par}}^{0,1}} \ar@{=}[d]\\
   {\quotient{\rW^0(X)}{\ker(\bar{w}_1)}} \ar[r]^-{\bar{w}_1} & {H^1_{\et}(X;\Z/2)}
\\
\\
   {E_{\infty, \text{Par}}^{0,2}} \ar@{^{(}->}[r]\ar@{=}[d] & {E_{2, \text{Par}}^{0,2}}\ar@{=}[d] \\
   {\ker(\bar{w}_1)} \ar[r]^-{\bar{w}_2} & {\quotient{H^2_{\et}(X;\Z/2)}{\Pic(X)}}
}
\end{align*}
\end{lemma}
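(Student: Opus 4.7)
The plan is to unwind Pardon's construction of the filtration $\sheaf{I}_t$ and compare the resulting edge homomorphisms with the varietal Stiefel-Whitney classes $\bar{w}_t$, exploiting the fact that at the level of fields the Milnor isomorphisms $e^t$ implementing $\sheaf{I}_t/\sheaf{I}_{t+1} \cong \sheaf{H}^t$ are induced, for $t=1,2$, by the Milnor-Stiefel-Whitney classes $w_t^M$. Structurally, I first pin down the filtration identifications $I_1(X) = \rW^0(X)$ and $I_2(X) = \ker(\bar{w}_1)$, and then verify the commutativity of the two squares by restricting to the generic point of $X$.

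The identification $I_1(X) = \rW^0(X)$ is immediate: under the equality $\W^0(X) = \sheaf{W}(X)$ coming from the collapse of the Gersten-Witt spectral sequence, $\W^0(X)$ sits inside $\W^0(K)$ as the subgroup of unramified classes, and the rank homomorphism on $\W^0(X)$ equals the composition of this inclusion with $\mathrm{rank}\colon \W^0(K)\to\Z/2$. Since $I(K) = \ker(\mathrm{rank})$ by definition of the fundamental ideal, this gives $\sheaf{I}_1(X) = \sheaf{W}(X)\cap I(K) = \rW^0(X)$.

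For the first square, Pardon's identification $\sheaf{I}_1/\sheaf{I}_2 \cong \sheaf{H}^1$ is the sheafification of the field-level isomorphism $e^1$, which is induced by $w_1^M$. By Lemma~\ref{lem:Pardons_E_infty}, the edge map $E_{\infty,\text{Par}}^{0,1}\hookrightarrow E_{2,\text{Par}}^{0,1}$ sends a class $[\Psi]$ to the section of $\sheaf{H}^1$ whose stalk at the generic point is $w_1^M(\Psi_K)$. On the other hand, the factorization diagram of Section~\ref{sec:CS:SW:fields} together with the naturality of Stiefel-Whitney classes shows that $\bar{w}_1(\Psi)$ also restricts to $w_1^M(\Psi_K)$ at the generic point. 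Since $\sheaf{H}^1$ is a subsheaf of the constant sheaf $H^1_{\et}(K;\Z/2)$ by Pardon's Gersten-type resolution, the restriction $H^1_{\et}(X;\Z/2)\hookrightarrow H^1_{\et}(K;\Z/2)$ is injective, and the two global maps therefore coincide. The injectivity of the edge map itself (the zeroth column receives no incoming differentials) then yields $\ker(\bar{w}_1) = I_2(X)$.

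The second square follows by exactly the same argument applied to $e^2$ and $w_2^M$, the target now being $H^2_{\et}(X;\Z/2)/\Pic(X) = H^0(X,\sheaf{H}^2)$. The identification $E_{\infty,\text{Par}}^{0,2} = \ker(\bar{w}_1)$ depicted in the diagram requires in addition that $I_3(X) = 0$; this holds because $\sheaf{H}^t = 0$ for $t\geq 3$ over an algebraically closed field forces $E_{\infty,\text{Par}}^{0,t} = 0$ for such $t$, combined with the vanishing $I^3(K) = 0$ at the generic point (the Arason-Pfister Hauptsatz applied to $K$, whose $2$-cohomological dimension is at most $2$). The main obstacle throughout is the careful sheaf-theoretic translation between Pardon's edge map and the varietal classes $\bar{w}_t$; the reduction to the generic stalk, where Milnor's field-level description applies directly and the injectivity above allows for a global comparison, is the technical heart of the argument.
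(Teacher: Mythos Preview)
Your proof is correct and follows essentially the same route as the paper's: both arguments reduce the comparison to the generic point, use that Milnor's isomorphisms $e^1$ and $e^2$ are induced by the Stiefel-Whitney classes $w_1^M$ and $w_2^M$, and exploit the injectivity of the restriction $\sheaf{H}^t(X)\hookrightarrow H^t_{\et}(K;\Z/2)$ coming from the Gersten resolution to pass back from the function field to $X$. Your treatment is in fact slightly more explicit than the paper's in one respect: you spell out why $I_3(X)=0$ (via $I^3(K)=0$ for the function field of a surface over an algebraically closed field), which is needed for the identification $E_{\infty,\text{Par}}^{0,2}=\ker(\bar{w}_1)$ but is left implicit in the paper's argument.
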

\begin{proof}
The statement of the lemma is not surprising: the only non-obvious maps that enter into the construction of Pardon's spectral sequence are the isomorphisms \( e^t \) displayed in \eqref{eq:Milnors-isos};
for \( t=1 \) or \(2 \), these can be identified with Stiefel-Whitney classes, and the latter can be defined globally.

In more detail, the various identifications arising from the Bloch-Ogus spectral sequence and Pardon's spectral sequence fit into the following diagram:
\begin{equation*}
\xymatrix{
  {E^{0,t}_{\infty, \text{Par}}} \ar@{^{(}->}[r]\ar@{=}[d] &
  {E^{0,t}_{2,\text{Par}}}\ar@{=}[d]
  \\
  {\quotient{I_t(X)}{I_{t+1}(X)}}\ar[r]
  %{\quotient{\sheaf{I}_{\unram}^{t}(X)}{\sheaf{I}_{\unram}^{t+1}(X)}} \ar[r]^-{\text{sheaf.}}
      \ar@{..>}[d]|{\text{\rule{0pt}{10pt}\raisebox{2pt}{?}}} &
  {\left(\quotient{\sheaf{I}_t}{\sheaf{I}_{t+1}}\right)(X)} \ar@{^{(}->}[r] \ar[d]^{\cong} &
  {\quotient{I^t(K)}{I^{t+1}(K)}} \ar[d]^{\cong}_{e^t}
  \\
  {H^t_{\et}(X;\Z/2)} \ar[r]^-{\text{sheafification}}\ar@{->>}[d] &
  {\sheaf{H}^t(X)} \ar@{^{(}->}[r] &
  {H^t_{\et}(K;\Z/2)}
  \\
  {E^{0,t}_{\infty, \text{BO}}} \ar[r]^{\cong}_{\text{(BO collapses)}} &
  {E^{0,t}_{2,\text{BO}}}\ar@{=}[u]
}
\end{equation*}
The claim is that, for \( t=1 \) or \( 2 \), the diagram commutes if we take the dotted map to be the \( t^{\text{th}} \) Stiefel-Whitney class. Indeed, \( e^t \) can be identified with \( \bar{w}_t \) in these cases, and the horizontal compositions across the two middle lines in the diagram are given by pullback along the inclusion of the generic point into \( X \).
\end{proof}

There are two potentially non-zero differentials in Pardon's spectral sequence: the differential \( S^1\colon\CH^1(X)/2\rightarrow \CH^2(X)/2 \), which is simply the squaring operation, and another differential \( d \) from \( H^1_{\et}(X;\Z/2) \) to \( H^3_{\et}(X;\Z/2) \).

Since the first Stiefel-Whitney class always surjects onto \(
H^1_{\et}(X;\Z/2) \), we see from the previous lemma that \(
H^1_{\et}(X;\Z/2) \) survives to the \( E_{\infty} \)-page. So the
differential \( d \) must vanish. On the other hand, since \(
E_2^{0,2}=E_{\infty}^{0,2} \), the lemma implies that the restriction
of the second Stiefel-Whitney class to the kernel of the first gives
an isomorphism with \( H^2_{\et}(X;\Z/2)/\Pic(X) \).
This proves the first statement of the proposition. Since \( S^1 \) is the only non-trivial differential left, the values of the shifted Witt groups may also be read off the spectral sequence. This completes the proof of Theorem~\ref{thm:W_Surface}.
\endproof

\begin{corollary}\label{cor:etale-w2_Surface}
  Let \(X\) be as in Theorem~\ref{thm:W_Surface}.  If we filter \(\GW^0(X)\) by \(\GW^0(X)\supset \rGW^0(X) \supset \ker(w_1) \supset \ker(w_2)\), we obtain an isomorphism
  \begin{equation*}
    \gr^* \GW^0(X) \cong \Z \oplus H^1_\et(X;\Z/2) \oplus H^2_\et(X;\Z/2) \oplus \CH^2(X).
  \end{equation*}
\end{corollary}

\begin{proof}
  It suffices to show that the second Stiefel-Whitney class restricts to an epimorphism
  \(
  \ker(w_1)\longtwoheadrightarrow H^2_{\et}(X;\Z/2)
  \)
  with kernel the isomorphic image of \(\ker(c_1|_{\rK_0(X)})\cong\CH^2(X)\) under the hyperbolic map.

  To see this, note first that the hyperbolic map \( H^0\colon{\rK_0(X)\rightarrow\rGW^0(X)} \) factors through the kernel of \( w_1 \).
  In fact, we can restrict the Karoubi sequence to an exact sequence forming the first row of the following diagram.
  \begin{equation*}
    \xymatrix{
      {0} \ar[r]  &
      {F(\rGW^3(X))} \ar[r] \ar[d] &
      {\rK_0(X)}             \ar[r]^{H^0}   \ar@{->>}[d]^{c_1} &
      {\ker(w_1)}       \ar[r]   \ar[d]^{w_2} &
      {\ker(\bar{w}_1)} \ar[r]\ar[d]^{\bar{w}_2}_{\cong} &
      {0} \\
      {0} \ar[r] &
      {\Pic(X)^{\otimes 2}} \ar[r] &
      {\Pic(X)}               \ar[r]           &
      {H^2_{\et}(X;\Z/2)}     \ar[r]     &
      {\quotient{H^2_{\et}(X;\Z/2)}{\Pic(X)}}  \ar[r] &
      {0}
    }
  \end{equation*}
  The lower row is obtained from the Kummer sequence, hence also exact, and the diagram commutes.  We know from the last paragraph of the proof of Theorem~\ref{thm:W_Surface} that the map \( \bar{w}_2 \) on the right is an isomorphism. Since \( c_1 \) is surjective, the claimed surjectivity of \( w_2 \) follows.

  It remains to show that the restriction of \(H^0\) to the kernels \(\ker(c_1)\rightarrow \ker(w_2) \)  is an isomorphism.  By an instance of the Snake Lemma, this is equivalent to showing that \(c_1\) maps \(F(\rGW^3(X))\) isomorphically to \( \Pic(X)^2 \).  As \(\W^3(X)\) is zero, \(F(\rGW^3(X))\) coincides with \( FH^3(\rK_0(X)) \).  The claim therefore follows from the next lemma.
\end{proof}

\begin{lemma}
  Consider the endomorphism \(FH^i\colon \K_0(X)\to \K_0(X)\) (for an arbitrary scheme \(X\) over \(\Z[\frac{1}{2}]\)). For any \(x\in\K_0(X)\), we have
  \begin{align*}
    &\left.
      \begin{aligned}
        c_1(FH^ix) &= 0\\
        c_2(FH^ix) &= 2c_2(x)-c_1(x)^2
      \end{aligned}\quad\quad\quad
    \right\}\text{ if \(i\) is even,}\\
    &\left.
      \begin{aligned}
        c_1(FH^ix) &=2c_1(x) \\
        c_2(FH^ix) &= 2c_1(x)^2
        = c_1(FH^ix)\cdot c_1(x)
      \end{aligned}\quad
    \right\}\text{ if \(i\) is odd.}
  \end{align*}
\end{lemma}
\begin{proof}
  This follows from the equations \(FH^i(x)=x+(-1)^ix^\vee\) and \(c_i(x^\vee) = (-1)^i c_i(x)\) \cite{Fulton:Intersection}*{Remark~3.2.3(a)}.
\end{proof}

\encouragepagebreak{\subsectionair}
\subsection{KO/K-groups of surfaces}\label{sec:CS:Surfaces:KOK}
On a topological space \( X \), let us write \( \Sq^2_{\Z} \) for the composition
\begin{equation*}
   H^2(X;\Z)\!/2\;\hookrightarrow H^2(X;\Z/2) \overset{\Sq^2}\longrightarrow H^4(X;\Z/2)
\end{equation*}
where \( \Sq^2 \) is the squaring operation.
\begin{proposition}\label{prop:KOK_Surfaces}
Let \( X \) be a connected CW complex of dimension at most four. Filter the group \( (\KO^0\!/\K)(X) \) by
\( (\KO^0\!/\K)(X)\supset(\rKO^0\!/\rK)(X)\supset\ker(\bar{w}_1) \).
Then the map \( (\rank, \bar{w}_1,\bar{w}_2) \) induces an isomorphism
\begin{align*}
 \gr\left(\tKOK{0}{(X)}\right) &\cong \Z/2\oplus H^1(X;\Z/2) \oplus \left(\quotient{H^2(X;\Z/2)}{H^2(X;\Z)}\right)
\intertext{The remaining groups \( \tKOK{2i}{(X)} \) have the following values:}
 \tKOK{2}{(X)} &\cong \ker(\Sq^2_{\Z}) \oplus H^3(X;\Z/2) \\
 \pagebreak[0]
 \tKOK{4}{(X)} &\cong \coker(\Sq^2_{\Z})\\
 \tKOK{6}{(X)} &= 0
\end{align*}
\end{proposition}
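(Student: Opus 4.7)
I would transplant the proof of Theorem~\ref{thm:W_Surface} to the topological setting essentially verbatim, replacing étale cohomology by singular cohomology, Chow groups by integral cohomology, and the squaring operation $S^1$ by the integral Steenrod square $\Sq^2_\Z$. The Bloch-Ogus spectral sequence collapses tautologically in the topological world, Pardon's spectral sequence is replaced by the Atiyah-Hirzebruch spectral sequence for the cofiber spectrum $\KO/\K$ of realification, and the Gersten-Witt spectral sequence is replaced by the AHSS for $\KO$ itself.

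Concretely, I would use the Bott exact sequence of \eqref{seq:Karoubi-comparison} to identify $\tKOK{2i}{(X)}$ with the cokernel of $\K^0(X)\to\KO^{2i}(X)$. The AHSS for $\K$ collapses when $\dim X\leq 4$, giving the classical Chern-class filtration on $\K^0(X)$. The AHSS for $\KO$ has $E_2$-page $H^s(X;\pi_{-t}\KO)$ with coefficients $\pi_0\KO=\Z$, $\pi_1\KO=\pi_2\KO=\Z/2$, $\pi_3\KO=0$, $\pi_4\KO=\Z$, periodic with period eight; in the relevant range its $d_2$-differentials are $\Sq^2$ between mod-$2$ rows and a $\Sq^2_\Z$-differential from the integral row at $t=-4$ to a mod-$2$ row above (Fujii). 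Taking the quotient by the (collapsing) AHSS for $\K$ yields a spectral sequence whose $E_2$-page is structurally identical to the $E_2$-page of Pardon's spectral sequence displayed in the proof of Theorem~\ref{thm:W_Surface}: $\Z/2$ in bidegree $(0,0)$, $H^s(X;\Z/2)$ along the row $t=-1$, the quotient $H^2(X;\Z/2)/H^2(X;\Z)$ at $(2,-2)$, and $\Sq^2_\Z$ and a potential differential $H^1(X;\Z/2)\to H^3(X;\Z/2)$ as the only non-trivial $d_r$'s to check.

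From this point the argument parallels the algebraic proof exactly. Proposition~\ref{prop:Stiefel-Whitney-classes} identifies the edge homomorphisms on $\tKOK{0}{(X)}$ with $\bar{w}_1$ and $\bar{w}_2$. Since $\bar{w}_1$ is surjective (any continuous symmetric line bundle lifts to a class in $\tKOK{0}{(X)}$), the entire class $H^1(X;\Z/2)$ survives to $E_\infty$, which forces the differential $H^1(X;\Z/2)\to H^3(X;\Z/2)$ to vanish, exactly as in the algebraic case. The stated graded description of $\tKOK{0}{(X)}$ follows, and $\tKOK{2}{(X)}$, $\tKOK{4}{(X)}$, $\tKOK{6}{(X)}$ read off as the corresponding kernels, cokernels, and zero of $\Sq^2_\Z$.

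The main obstacle is the differential identification at the level of the quotient spectrum $\KO/\K$: while the identification of $d_2$ with Steenrod squares on the KO-AHSS is classical, the passage to the cofiber-spectrum spectral sequence requires checking that the $\K$-contribution is killed off cleanly --- which is plausible precisely because the AHSS for $\K$ collapses in this range, so that the long exact sequence of spectral sequences reduces to a short exact sequence of $E_2$-pages degree by degree. Extension problems in the filtration on $\tKOK{0}{(X)}$ split as in the Witt case, since the group is $2$-torsion (by a direct topological analogue of Lemma~\ref{lem:HF}) and the Stiefel-Whitney maps provide canonical splittings of each graded piece.
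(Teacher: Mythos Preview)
Your treatment of $\tKOK{0}{(X)}$ via Stiefel--Whitney classes and the realification map between the $\K$- and $\KO$-AHSS filtrations is essentially the paper's argument. The gap is in the shifted groups. The cofiber of realification $r\colon\K\to\KO$ is, by rotating the Wood cofiber sequence $\Sigma\KO\xrightarrow{\eta}\KO\xrightarrow{c}\K$, just a suspension of $\KO$ itself. Its Atiyah--Hirzebruch spectral sequence therefore has the same $E_2$-page as the $\KO$-AHSS, merely shifted; it is not structurally parallel to Pardon's spectral sequence, and it converges to $\KO^{2i-1}(X)$ rather than to the quotient $\tKOK{2i}{(X)}$. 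Taking pointwise cokernels of the map of $E_2$-pages does not produce a spectral sequence either: there is no ``long exact sequence of spectral sequences'', and the $E_2$-layout you describe---a single $\Z/2$, a row of $H^s(X;\Z/2)$, and an isolated $H^2(X;\Z/2)/H^2(X;\Z)$---is not the $E_2$-page of any spectrum. Your vanishing argument for a putative differential $H^1(X;\Z/2)\to H^3(X;\Z/2)$ thus has no spectral sequence in which to take place.

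The paper avoids this entirely. It identifies $\tKOK{2i}{(X)}$ with the image of $\eta\colon\KO^{2i}(X)\to\KO^{2i-1}(X)$ and computes that image by chasing $\eta$ through the AHSS filtration one layer at a time: for each $i$ this amounts to two or three short exact sequences comparing successive filtration quotients of source and target, using only the known identifications of the low differentials with $\Sq^2$, $\Sq^2\circ\pi$, and $\beta\circ\Sq^2$. No quotient or cofiber spectral sequence is constructed.
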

\begin{example}
 Suppose \( X \) is a compact four-dimensional manifold. Write \( b_i \) for its Betti numbers and \( \nu \) for the rank of \( H^2(X;\Z)[2] \). Then the above result shows that
\pagebreak[0]
\begin{align*}
 \tKOK{0}{(X)} &= [\Z/2]\oplus (\Z/2)^{b_1+2\nu}\\
 \tKOK{2}{(X)} &= \begin{cases}
                (\Z/2)^{b_1+b_2+2\nu}   & \text{ if } \Sq^2_{\Z} = 0 \\
                (\Z/2)^{b_1+b_2+2\nu-1} & \text{ if } \Sq^2_{\Z}\neq 0
                \end{cases}\\
 \tKOK{4}{(X)} &= \begin{cases}
                \Z/2  \quad\quad\quad\quad\quad\quad & \text{ if } \Sq^2_{\Z} = 0 \\
                0                                    & \text{ if } \Sq^2_{\Z}\neq 0
                \end{cases}\\
 \tKOK{6}{(X)} &= 0
\end{align*}
\end{example}

\begin{remark}
  Note that both cases ---~\( \Sq^2_{\Z}=0 \) and \( \Sq^2_{\Z} \) onto~--- can occur even for complex projective surfaces of geometric genus zero.
  For example, \( \Sq^2_{\Z} \) is onto for \( \P^2 \). On the other hand, the Wu formula \cite{MilnorStasheff}*{Theorem~11.14} shows that \( \Sq^2 \) is given by multiplication with \( c_1(X) \) mod \( 2 \). So \( \Sq^2_{\Z}=0 \) on any projective surface \( X \) of geometric genus zero
whose canonical divisor is numerically trivial. Concretely, we could take \( X \) to be an Enriques surface (a quotient of a K3-surface by a fixed-point free involution): see \cite{Beauville}*{page~90}.
\end{remark}

To calculate the groups \( \tKOK{2i}{(X)} \), we use the Atiyah-Hirzebruch spectral sequences for K- and KO-theory:
\begin{align*}
 &E^{p,q}_2=H^p(X;\K^q(\point))\Rightarrow \K^{p+q}(X)\\
 &E^{p,q}_2=H^p(X;\KO^q(\point))\Rightarrow \KO^{p+q}(X)
\end{align*}
For K-theory, the spectral sequence has the singular cohomology of \(
X \) with integral coefficients in all even rows, while the odd rows
vanish. The spectral sequence for KO-theory is \( 8 \)-periodic in \( q \). We have the integral cohomology of \( X \) in rows \( q\equiv 0 \) and \( q\equiv-4 \) mod~\( 8 \), its cohomology with \( \Z/2 \)-coefficients in rows \( q\equiv -1 \) and \( q\equiv -2 \), and all other rows are zero.
\begin{figure}[h!]
\begin{equation*}
\xymatrix@R=0pt@C=0pt{
 & \ar@{<-}[ddddddddddddd]_<{q}\\
 && {H^0(X;\Z)} && {H^1(X;\Z)} && {H^2(X;\Z)}\ar[rrrrdd]|{\;{\Sq^2}\circ\pi} && {H^3(X;\Z)} && {H^4(X;\Z)} \\
 \ar[rrrrrrrrrrrr]_>{\;p}|(.69){\;\phantom{{\Sq^2}\circ\pi}\;\;} && && && && && && \\
 && {H^0(X;\Z/2)} && {H^1(X;\Z/2)} && {H^2(X;\Z/2)}\ar[rrrrdd]|{\;\Sq^2} && {H^3(X;\Z/2)} && {H^4(X;\Z/2)} \\
 &  \\
 && {H^0(X;\Z/2)} && {H^1(X;\Z/2)} && {H^2(X;\Z/2)} && {H^3(X;\Z/2)} && {H^4(X;\Z/2)} \\
 &  \\
 && {0} && {0}  && {0} && {0} && {0}\\
 & \\
 && {H^0(X;\Z)} && {H^1(X;\Z)} && {H^2(X;\Z)} && {H^3(X;\Z)} && {H^4(X;\Z)} \\
 & \\
 && {0} && {0} && {0} && {0} && {0} \\
 & \\
 && {\vdots} && {\vdots} && {\vdots} && {\vdots} && {\vdots}\\
 }
\end{equation*}
\caption{The \(E_2\)-page of the Atiyah-Hirzebruch spectral sequence computing \(\KO^*(X)\)}
\label{fig:KO-AHSS}
\end{figure}

The low-order differentials of these spectral sequences may be described as follows. Let
\begin{equation*}
\begin{aligned}
 \pi&\colon H^*(X;\Z)\rightarrow     H^*(X;\Z/2)\\
 \Sq^2&\colon H^*(X;\Z/2)\rightarrow   H^{*+2}(X;\Z/2)\\
 \beta&\colon H^*(X;\Z/2)\rightarrow   H^{*+1}(X;\Z)
\end{aligned}
\end{equation*}
denote reduction modulo two, the second Steenrod square, and the
Bockstein homomorphism, respectively. For K-theory, the differentials
on the \(E_3\)-page of the Atiyah-Hirzebruch spectral sequence are
given by the composition \(\beta\circ{\Sq^2}\circ\pi\)
\cite{HJJS:BasicBundleTheory}*{Chapter~21, \S~5}. For KO-theory, the
differentials \( d_2^{*,0} \) and \( d_2^{*,-1} \) on the \(E_2\)-page
are given by \( \Sq^2\circ\pi \) and \( \Sq^2 \), respectively. The
differential \( d_3^{*,-2}\) on the \(E_3\)-page can be identified
with \( \beta\circ \Sq^2 \) \cite{Fujii:P}*{1.3}. The following lemma is now immediate.
\begin{lemma}\label{lem:AHSS_d3}
The differential
\(
 d_3^{1,-2}\colon{E_3^{1,-2}\longrightarrow E_3^{4,-4}}
\)
is trivial both in the Atiyah-Hirze\-bruch spectral sequence for K-theory and in the spectral sequence for KO-theory.
\end{lemma}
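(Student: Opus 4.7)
The plan is to observe that in both spectral sequences the source $E_3^{1,-2}$ is a subquotient of a degree-$1$ cohomology group of $X$, and the stated description of $d_3$ makes the differential factor through the operation $\Sq^2$ in degree $1$, which vanishes by the unstable axiom $\Sq^i|_{H^j}=0$ for $i>j$.

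More precisely, for K-theory the coefficient group is $\K^{-2}(\point)=\Z$, so $E_2^{1,-2}=H^1(X;\Z)$. The $d_2$-differential on the K-theory Atiyah-Hirzebruch spectral sequence is automatically zero (the $E_2$-page is concentrated in even rows), hence $E_3^{1,-2}=H^1(X;\Z)$. The formula $d_3=\beta\circ\Sq^2\circ\pi$ recalled from \cite{HJJS:BasicBundleTheory} then factors through $\Sq^2\colon H^1(X;\Z/2)\to H^3(X;\Z/2)$, which is zero since $\Sq^2$ annihilates classes of degree $1$.

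For KO-theory we have $\KO^{-2}(\point)=\Z/2$, so $E_2^{1,-2}=H^1(X;\Z/2)$. The outgoing $d_2$ lands in $E_2^{3,-3}=0$ (row $q=-3$ is zero), and the incoming $d_2$ comes from $E_2^{-1,-1}=0$, so again $E_3^{1,-2}=H^1(X;\Z/2)$. By Fujii's description $d_3^{*,-2}=\beta\circ\Sq^2$, the computation reduces to $\Sq^2$ on $H^1(X;\Z/2)$, which is zero by the same unstable-axiom argument.

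In both cases no real obstacle arises; the only care needed is to verify that $E_3^{1,-2}$ coincides with the full group $H^1(X;-)$, i.e.\ that no earlier differentials have truncated it, and this is immediate from the sparsity of the relevant $E_2$-pages. The lemma thus follows.
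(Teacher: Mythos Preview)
Your argument is correct and is precisely the reasoning the paper has in mind: after recording that $d_3$ is $\beta\circ\Sq^2\circ\pi$ (for K-theory) and $\beta\circ\Sq^2$ (for KO-theory in row $-2$), the paper simply declares the lemma ``immediate'', and the content of that immediacy is exactly your observation that $\Sq^2$ vanishes on $H^1$ together with the trivial check that $E_3^{1,-2}$ is the full $H^1$-group.
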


\begin{lemma}\label{lem:c-and-w-in-AHSS}%
Let \( X \) be a connected finite-dimensional CW complex. Denote the filtrations on the groups \( \K^0(X) \) and \( \KO^i(X) \) associated with the Atiyah-Hirzebruch spectral sequences by
\begin{equation*}
\begin{aligned}
   & \K_0(X) \supset \K_1(X) = \K_2(X) \supset \K_3(X) = \K_4(X) \supset \dots \\
   & \KO^i_0(X) \supset \KO^i_1(X) \supset \KO^i_2(X) \supset \KO^i_3(X) \supset \dots
\end{aligned}
\end{equation*}
The initial layers of these filtrations have more intrinsic descriptions in terms of Chern and Stiefel-Whitney classes:
\begin{itemize}
 \item \( \K_2(X)=\K_1(X)=\rK^0(X) \), and \( \KO^i_1(X)=\rKO^i(X) \)
 \item The map \( \K_2(X)\rightarrow H^2(X;\Z) \) arising from the spectral sequence may be identified with the first Chern class, at least up to a sign. That is, we have the following commutative diagram:
 \begin{equation*}
  \xymatrix@R=10pt{
   {E^{2,-2}_{\infty}} \ar@{^{(}->}[r] & {E_2^{2,-2}}\\
   {\rK^0(X)} \ar@{->>}[u] \ar@{->>}[r]^-{\pm c_1} & {H^2(X;\Z)}\ar@{=}[u]
  }
 \end{equation*}
 In particular, \( \K_4(X)=\ker(c_1) \). Moreover, since \( c_1 \) is surjective, \( E^{2,-2}_{\infty} = E^{2,-2}_2\).
 \item The map \( \K_4(X)\rightarrow H^4(X;\Z) \) can be identified with the restriction of the second Chern class to \(\ker(c_1)\), again up to a sign:
 \begin{equation*}
  \xymatrix@R=10pt{
   {E^{4,-4}_{\infty}} \ar@{^{(}->}[r] & {E_2^{4,-4}}\\
   {\ker(c_1)} \ar@{->>}[u] \ar[r]^-{\pm c_2} & {H^4(X;\Z)}\ar@{=}[u]
  }
 \end{equation*}
 Thus, \( \K_6(X) = \ker(c_2) \). (Note that the upper horizontal map in this diagram is indeed an inclusion, by the previous lemma.)
 \item Likewise, the map \( \KO^0_1(X)\rightarrow H^1(X;\Z/2) \) can be identified with the first Stiefel-Whitney class:
 \begin{equation*}
  \xymatrix@R=10pt{
   {E^{1,-1}_{\infty}} \ar@{^{(}->}[r] & {E_2^{1,-1}}\\
   {\rKO^0(X)} \ar@{->>}[u] \ar@{->>}[r]^-{w_1} & {H^1(X;\Z/2)}\ar@{=}[u]
  }
 \end{equation*}
 Thus, \( \KO^0_2(X)=\ker(w_1) \) and \( E^{1,-1}_\infty = E_2^{1,-1} \).

 \item Lastly, the map \( \KO^0_2(X)\rightarrow H^2(X;\Z/2) \) can be identified with the restriction of the second Stiefel-Whitney class, \ie we have a commutative diagram
 \begin{equation*}
 \xymatrix@R=10pt{
   {E^{2,-2}_{\infty}} \ar@{^{(}->}[r] & {E_2^{2,-2}}\\
   {\ker(w_1)} \ar@{->>}[u] \ar[r]^-{w_2} & {H^2(X;\Z/2)}\ar@{=}[u]
  }
 \end{equation*}
 Thus, \( \KO^0_4(X)=\ker(w_2) \).
\end{itemize}
\end{lemma}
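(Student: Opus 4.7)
The plan is to identify each edge homomorphism of the Atiyah--Hirzebruch spectral sequence with the claimed characteristic class by appealing to naturality and testing on universal examples. Throughout, I handle the complex and real cases in parallel; the only non-formal input beyond naturality is the standard calculation of $c_1$ on $\CP^\infty$ and of $w_1$ on $\RP^\infty$.

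First I would dispatch the base-case identifications. For a connected $X$ one has $E^{0,0}_2 = H^0(X;\Z) = \Z$, and the edge $\K^0(X) \twoheadrightarrow E^{0,0}_\infty \subseteq E^{0,0}_2$ is the rank map by naturality from the inclusion of a point; hence $\K_1(X) = \rK^0(X)$ and likewise $\KO^i_1(X) = \rKO^i(X)$. The equality $\K_1(X) = \K_2(X)$ is automatic from $E^{1,-1}_2 = H^1(X;\K^{-1}(\mathrm{pt})) = 0$. In the KO-case, the analogous collapse $E^{1,-1}_\infty = E^{1,-1}_2$ follows because $d_2^{1,-1}=\Sq^2$ vanishes on $H^1(X;\Z/2)$ for degree reasons ($\Sq^i$ vanishes on $H^j$ when $i>j$), and subsequent differentials land in zero rows; similarly $d_2^{2,-2}$ lands in row $-3$ which is zero, so the only obstruction to $E^{2,-2}_\infty = E^{2,-2}_2$ is $d_3^{2,-2}$.

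Next I would prove the characteristic-class identifications by the universal-example trick. For each of the edges
\[
   \K_2(X) \twoheadrightarrow E^{2,-2}_\infty \hookrightarrow H^2(X;\Z),\;\;
   \K_4(X) \twoheadrightarrow E^{4,-4}_\infty \hookrightarrow H^4(X;\Z),
\]
\[
   \rKO^0(X) \twoheadrightarrow E^{1,-1}_\infty \hookrightarrow H^1(X;\Z/2),\;\;
   \KO^0_2(X)\twoheadrightarrow E^{2,-2}_\infty \hookrightarrow H^2(X;\Z/2),
\]
naturality of the spectral sequence implies that the composite is a natural transformation of contravariant functors on CW-complexes. By representability, such a transformation is determined by its value on the universal example: $\CP^\infty$ for $c_1$, $\CP^\infty\times\CP^\infty$ (or equivalently $BU$) for $c_2|_{\ker(c_1)}$, $\RP^\infty$ for $w_1$, and $\RP^\infty\times\RP^\infty$ (or $BO$) for $w_2|_{\ker(w_1)}$. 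On each of these spaces the relevant K- or KO-group is topologically generated by tautological line bundles, and the target cohomology group is generated by the corresponding Chern or Stiefel--Whitney classes. Computing the edge on a single tautological class suffices to pin it down up to sign. The surjectivity of $c_1$ on any CW-complex then forces $E^{2,-2}_\infty=E^{2,-2}_2$ in the complex case, and an analogous argument gives the corresponding consequences for the other three edges. Lemma~\ref{lem:AHSS_d3} is precisely what is needed to guarantee that the filtration subquotient $\K_4/\K_6 = E^{4,-4}_\infty$ embeds into $H^4(X;\Z)$ (since the differential from $E^{1,-2}_3$ is shown there to vanish).

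The main obstacle is bookkeeping rather than anything conceptual: one has to confirm at each stage that the edge homomorphism is actually non-trivial on the universal example, so the matching with $c_i$ or $w_i$ is not by a zero multiple. For $c_1$ and $w_1$ this is classical (see \cite{HJJS:BasicBundleTheory}*{Chapter~21}), and the statements for $c_2$ and $w_2$ reduce to these via the multiplicativity of the total Chern/Stiefel--Whitney class evaluated on a sum of two tautological line bundles, restricted to the kernel of the lower class. The signs are irrelevant for our applications and can be absorbed by the ambiguity in the statement of the lemma.
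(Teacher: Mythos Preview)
Your approach---identify each edge homomorphism as a natural transformation, invoke representability to reduce to a universal cohomology class, and test on a small space to rule out the zero map---is exactly the paper's. The paper phrases the Yoneda step as: extend the edge to all connected CW complexes via the isomorphism $H^i(X;\Z/2)\cong\varprojlim_n H^i(X^n;\Z/2)$, identify natural transformations out of $\rKO^0$ with elements of $H^i(\BO;\Z/2)$, and then test on $S^1$ (for $w_1$) and $S^2$ (for $w_2$) to exclude zero.

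There is one place where the paper is more careful and your sketch has a genuine gap. For the $w_2$ edge the domain is $\ker(w_1)\subset\rKO^0(X)$, which is not represented by $\BO$ (nor by $\RP^\infty\times\RP^\infty$), so ``by representability'' does not apply as written. The paper fixes this by constructing a natural \emph{set-theoretic} retraction
\[
   \rKO^0(X)\twoheadrightarrow\ker(w_1),\qquad
   [\vb{E}]-\rank(\vb{E})[\R]\;\longmapsto\;[\vb{E}]-\rank(\vb{E})[\R]-[\det\vb{E}]+[\R],
\]
and composing with the edge map to get a genuine natural transformation out of $\rKO^0$; this lands in $H^2(\BO;\Z/2)=\Z/2\cdot w_1^2\oplus\Z/2\cdot w_2$, and on $\ker(w_1)$ only the $w_2$ component can survive. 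Your suggested reduction ``via multiplicativity on a sum of two tautological line bundles restricted to $\ker(w_1)$'' does not produce such a bundle (a sum of two line bundles over $\RP^\infty\times\RP^\infty$ generically has $w_1\neq 0$), so the argument is incomplete; you could alternatively use $\mathrm{BSO}$ as the representing space for $\ker(w_1)$ and note that $H^2(\mathrm{BSO};\Z/2)=\Z/2\cdot w_2$. The same remark applies to the $c_2$ case.

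A minor caveat: your direct argument that $E^{1,-1}_\infty=E^{1,-1}_2$ via ``subsequent differentials land in zero rows'' overlooks $d_4$, whose target row $q=-4$ carries integral cohomology. The paper avoids this by reversing the logic: once the edge is identified with $w_1$, surjectivity of $w_1$ forces $E^{1,-1}_\infty=E^{1,-1}_2$ a posteriori.
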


\begin{proof}
The first statement is clear. The other assertions follow by viewing
the maps in question as cohomology operations and computing them for a
few spaces. For lack of a reference, we include a more detailed proof of the statements concerning KO-theory. The case of complex K-theory can be dealt with analogously.

First, we analyse the map \( \rKO^0(X)\rightarrow H^1(X;\Z/2) \) arising from the spectral sequence. A priori, we have defined this map only for finite-dimensional CW complexes. But we can extend it to a natural transformation of functors on the homotopy category of all connected CW complexes, using the fact that the canonical map \(H^1(X;\Z/2) \rightarrow \lim_i H^1(X^i;\Z/2) \) is an isomorphism for any CW complex \( X \) with \(i\)-skeletons \( X^i \). On the homotopy category of connected CW complexes, the functor \( \rKO^0(-) \) is represented by \( \mathrm{BO} \). Natural transformations \( \rKO^0(-)\rightarrow H^1(-;\Z/2) \) are therefore in one-to-one correspondence with elements of \( H^1(\mathrm{BO};\Z/2)=\Z/2\cdot w_1 \), where \( w_1 \) is the first Stiefel-Whitney class of the universal bundle over \( \mathrm{BO} \). Thus, either the map in question is zero, or it is given by \( w_1 \) as claimed. Since it is non-zero on \( S^1 \), the first case may be discarded.

To analyse the map \( \KO^0_2(X)\rightarrow H^2(X;\Z/2) \), we note that the previous conclusion yields a functorial description of \( \KO^0_2(X) \) as the kernel of \( w_1 \) on \( \rKO^0(X) \). Moreover, given this description, we may define a natural set-wise splitting of the inclusion of \( \KO^0_2(X) \) into \( \rKO^0(X) \) for any finite-dimensional CW complex \(X\) as follows:
\begin{align*}
 \rKO^0(X)\twoheadrightarrow &\;\KO^0_2(X)\\
 [\vb{E}]-\rank(\vb{E})[\R] \mapsto &\;[\vb{E}]-\rank(\vb{E})[\R] - [\det\vb E] + [\R]
\end{align*}
The composition \( \rKO^0(-)\rightarrow H^2(-;\Z/2) \) can be extended to a natural transformation of functors on the homotopy category of connected CW complexes in the same way as before, and it may thus be identified with an element of \( H^2(\mathrm{BO};\Z/2)=\Z/2\cdot w_1^2\oplus\Z/2\cdot w_2 \). Consequently, its restriction
to \( \KO^0_2(X) \) is either \( 0 \) and \( w_2 \). Again, the first possibility may be discarded, for example by considering \( S^2 \).
\end{proof}

Lemma~\ref{lem:c-and-w-in-AHSS} has some immediate implications for low-dimensional spaces. In the following corollary, all characteristic classes refer to the corresponding maps on reduced groups:
\begin{alignat*}{2}
   c_i&\colon &\rK^0(X)  \longrightarrow &\;H^{2i}(X;\Z)\\
   w_i&\colon &\rKO^0(X) \longrightarrow &\;H^i(X;\Z/2)
\end{alignat*}
\begin{corollary}\label{cor:w2_Surface}
Let \( X \) be a connected CW complex of dimension at most four. Then the second Chern class is surjective and restricts to an isomorphism
\begin{alignat*}{2}
  c_2&\colon &\ker(c_1)\overset{\cong}\longrightarrow &\;H^4(X;\Z)
\intertext{%
The second Stiefel-Whitney class is also surjective, and restricts to an epimorphism
}
  w_2&\colon &\ker(w_1)\longtwoheadrightarrow &\;H^2(X;\Z/2)
\intertext{%
Its kernel is given by the image of \( \ker(c_1) \) under realification. Moreover, the induced map \( \bar{w}_2 \) on \( \trKOK{0}{(X)} \) restricts to an isomorphism
}
  \bar{w}_2 &\colon &\ker(\bar{w}_1)\overset{\cong}\longrightarrow &\;\left(\quotient{H^2(X;\Z/2)}{H^2(X;\Z)}\right)
\end{alignat*}

\end{corollary}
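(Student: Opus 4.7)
My plan is to deduce all three assertions from Lemma~\ref{lem:c-and-w-in-AHSS} by squeezing the K- and KO-Atiyah--Hirzebruch spectral sequences at bidegree $(4,-4)$; the dimension hypothesis $\dim X\leq 4$ together with Lemma~\ref{lem:AHSS_d3} eliminates essentially all the relevant differentials.

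For the statement on $c_2$ I would invoke Lemma~\ref{lem:c-and-w-in-AHSS} to identify $\K_4(X)=\ker(c_1)$ with edge map $\pm c_2$. The two potential incoming differentials at $E^{4,-4}$ in the K-AHSS are $d_2^{2,-3}$, whose source sits in the vanishing odd row $q=-3$, and $d_3^{1,-2}$, which is zero by Lemma~\ref{lem:AHSS_d3}; outgoing differentials land in columns $p\geq 6$, which vanish by the dimension constraint. Hence $E^{4,-4}_\infty=H^4(X;\Z)$, and since the same dimensional vanishing of $E^{p,-p}_2$ forces $\K_p(X)=0$ for $p\geq 5$, this yields $c_2\colon \ker(c_1)\xrightarrow{\cong}H^4(X;\Z)$; surjectivity of $c_2$ on the whole of $\rK^0(X)$ is then immediate. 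Running the parallel argument in the KO-AHSS gives the surjection $w_2\colon\ker(w_1)\twoheadrightarrow H^2(X;\Z/2)$: the lemma identifies $\KO^0_2(X)=\ker(w_1)$ with edge map $w_2$, the sole potentially nonzero incoming differential $d_2^{0,-1}=\Sq^2$ vanishes on $H^0(X;\Z/2)$ for degree reasons, and the outgoing differentials hit zero rows ($q=-3,-5$) or zero columns.

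The remaining and most delicate step is the kernel equality $\ker(w_2)\cap\ker(w_1)=H^0(\ker c_1)$. The inclusion $\supseteq$ is immediate from the classical compatibility $w_2\circ H^0=\pi\circ c_1$, i.e.\ $w_{2}(r\vb E)=c_1(\vb E)\bmod 2$ for a realified complex bundle. For the reverse inclusion I would use Lemma~\ref{lem:c-and-w-in-AHSS} once more to identify $\ker(w_2)\cap\ker(w_1)=\KO^0_4(X)$ and then compare the K- and KO-spectral sequences via the map induced by realification $H^0=r\colon\K\to\KO$. On the $E_2$-page at bidegree $(4,-4)$ this comparison is simply the identity map $H^4(X;\Z)\to H^4(X;\Z)$, which follows from the coefficient identity $r(\beta_\K^2)=\beta_\KO$ (itself an easy consequence of the relations $rc=2$ on $\KO^{*}(\mathrm{pt})$ and $c(\beta_\KO)=2\beta_\K^2$). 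Since $E^{4,-4}_\infty(\K)=H^4(X;\Z)$ on the nose while $E^{4,-4}_\infty(\KO)$ is at most a quotient of $H^4(X;\Z)$, and both $\K_4(X)$ and $\KO^0_4(X)$ coincide with their respective $E_\infty$-terms (as $\K_5=\KO^0_5=0$ by dimension), the induced map $H^0\colon\K_4(X)\to\KO^0_4(X)$ is surjective. This gives $H^0(\ker c_1)=\KO^0_4(X)=\ker(w_2)\cap\ker(w_1)$ as required. The assertion on $\bar w_2$ then drops out via a quick diagram chase on the commutative ladder whose bottom row is the reduction sequence $H^2(X;\Z)\xrightarrow{\pi}H^2(X;\Z/2)\twoheadrightarrow H^2(X;\Z/2)/H^2(X;\Z)$ and whose top row is the analogous quotient for the restriction of $H^0$, using surjectivity of $c_1$ and $w_2$ already established.

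The main obstacle is really the spectral-sequence comparison argument at the $(4,-4)$ position: once one accepts the coefficient computation $r(\beta_\K^2)=\beta_\KO$ and the naturality of the AHSS under realification, everything else is formal bookkeeping of edge maps and a short diagram chase.
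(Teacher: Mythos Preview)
Your argument is correct and follows essentially the same route as the paper. Both proofs use Lemma~\ref{lem:c-and-w-in-AHSS} together with Lemma~\ref{lem:AHSS_d3} and the dimension bound to force \(E_\infty^{4,-4}=E_2^{4,-4}\) in the two Atiyah--Hirzebruch sequences, and then compare the filtrations via realification; the paper simply packages your two inclusions and the final diagram chase into a single row-exact commutative ladder
\[
\xymatrix{
0\ar[r] & H^4(X;\Z)\ar[r]\ar[d]^{\cong} & \rK^0(X)\ar[r]^{c_1}\ar[d]^{r} & H^2(X;\Z)\ar[r]\ar[d]^{\pi} & 0\\
0\ar[r] & H^4(X;\Z)\ar[r] & \ker(w_1)\ar[r]^{w_2} & H^2(X;\Z/2)\ar[r] & 0
}
\]
from which both the kernel description and the isomorphism for \(\bar w_2\) drop out at once.
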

\begin{proof}
Consider the Atiyah-Hirzebruch spectral sequences for K- and KO-theory. \\
\noindent Lemma~\ref{lem:AHSS_d3} shows that the sequence for K-theory collapses. Moreover, in the spectral sequence for KO-theory, no differentials affect the diagonal computing \( \KO^0(X) \). This implies the first two claims of the corollary. Next, we compare the two spectral sequences via realification. The description of the kernel of \( w_2 \) may be obtained from the following row-exact commutative diagram:\pagebreak[0]
\begin{equation*}
\xymatrix{
    {0}      \ar[r]
  & {H^4(X;\Z)} \ar[r]\ar[d]^{\cong}
  & {\rK(X)} \ar[r]^{c_1}\ar[d]^{r}
  & {H^2(X;\Z)} \ar[r]\ar[d]^{\text{reduction mod 2}}
  & {0}
  \\ {0}          \ar[r]
  & {H^4(X;\Z)}      \ar[r]
  & {\ker(w_1)}   \ar[r]^{w_2}
  & {H^2(X;\Z/2)} \ar[r]
  & {0}
  }
\end{equation*}
The final claim of the corollary also follows from this diagram, by identifying
 \( \ker(\bar w_1) \) with \(\ker(w_1)/\rK(X) \).
\end{proof}
The situation for a connected CW complex of dimension at most four may now be summarized as follows. Firstly, by Lemma~\ref{lem:c-and-w-in-AHSS}, the filtrations on the groups \(\K^0(X)\) and \(\KO^0(X)\) arising in the Atiyah-Hirzebruch spectral sequences can be written as
\begin{align*}
 \K^0(X)  &\supset \rK^0(X) \supset \ker(c_1) \\
 \KO^0(X) &\supset \rKO^0(X) \supset \ker(w_1)
\end{align*}
Secondly, Corollary~\ref{cor:w2_Surface} implies that the maps \( (\rank, c_1, c_2) \) and \( (\rank, \bar{w}_1, \bar{w}_2) \) on the associated graded groups induce isomorphisms
\begin{align}
  \gr(\K^0(X))&\cong \;\Z\;\oplus H^2(X;\Z) \;\oplus\; H^4(X;\Z) \label{eq:grKtop}\\
  \gr\left(\tKOK{0}{(X)}\right)&\cong \Z/2\oplus H^1(X;\Z/2) \oplus \left(\quotient{H^2(X;\Z/2)}{H^2(X;\Z)}\right) \label{eq:grKOK}
\end{align}
In particular, we have proved the first part of Proposition~\ref{prop:KOK_Surfaces}.
\bigskip

\proof[Proof of the remaining claims of Proposition~\ref{prop:KOK_Surfaces}]
\raggedbottom
To compute \( \tKOK{2i}{(X)} \) for general \(i\), we identify this quotient with the image of
\[\eta\colon \KO^{2i}(X)\rightarrow\KO^{2i-1}(X)\]
This image can be computed at each stage of the filtration
\begin{equation*}
  \KO^j(X)= \KO^j_0(X) \supset \KO^j_1(X) \supset \dots \supset \KO^j_4(X)
\end{equation*}
To lighten the notation, we simply write \(\KO^j_k\) for \(\KO^j_k(X)\) in the following, and we write \(H^*(X)\) for the singular cohomology of \(X\) with integral coefficients.

Since we are assuming that \(X\) is at most four-dimensional, there are only three possibly non-zero differentials in the spectral sequence computing \(\KO^*(X)\). The first two are the differentials

\begin{alignat*}{2}
      &{\Sq^2}\circ\pi \colon  & H^2(X)\; \longrightarrow &\; H^4(X;\Z/2)\\
      &{\Sq^2}         \colon  & H^2(X;\Z/2)\rightarrow &\; H^4(X;\Z/2)
\end{alignat*}
on the \( E_2 \)-page, depicted in Figure~\ref{fig:KO-AHSS}. If \( E_2^{4,-2}=H^4(X;\Z/2) \) is not killed by \( \Sq^2 \), then we have a third possibly non-trivial differential on the \( E_3 \)-page:
\begin{align*}
  d_3^{1,0}\colon{H^1(X)\rightarrow \coker(\Sq^2)}
\end{align*}
The differential \(d_3^{1,-2}\) vanishes by Lemma~\ref{lem:AHSS_d3}.
\paragraph{Computation of KO\( ^2 \)/K.}
For the last stage of the filtration, we have a commutative diagram
\begin{equation*}
\xymatrix{
  {0}\ar[r] & {\KO^2_4} \ar[r]\ar[d] & {\KO^2_3} \ar[r]\ar[d]^{\eta} & {H^3(X;\KO^{-1}(\point))} \ar[r]\ar[d]^{\cong} & {0} \\
  {0}\ar[r] & {0}       \ar[r]       & {\KO^1_3} \ar@{=}[r]          & {H^3(X;\KO^{-2}(\point))} \ar[r] & {0}
  }
\end{equation*}
Thus, \( \KO^2_3 \) maps surjectively to \( \KO^1_3\cong H^3(X;\Z/2) \).
Next, we have
\begin{equation*}
\xymatrix{
  {0}\ar[r] & {\KO^2_3} \ar[r]\ar@{->>}[d]^{\eta} & {\KO^2_2} \ar[r]\ar[d]^{\eta} & {\ker({\Sq^2}\circ\pi)} \ar[r]\ar[d]^{\pi} & {0} \\
  {0}\ar[r] & {\KO^1_3} \ar[r]             & {\KO^1_2} \ar[r]       & {\ker(\Sq^2)} \ar[r] & {0}
  }
\end{equation*}
where \( \ker({\Sq^2}\circ\pi)\subset H^2(X;\KO^{0}(\point)) \) and \( \ker(\Sq^2)\subset H^2(X;\KO^{-1}(\point)) \).
This gives a short exact sequence of images, which must split since all groups involved are killed by multiplication with \( 2 \). Thus, we have
\begin{align*}
 \im\!\left(\KO^2_2\overset{\eta}\rightarrow\KO^1_2\right)
   & \cong \KO^1_3 \oplus \im\!\Big(\ker({\Sq^2}\circ\pi)\rightarrow\ker(\Sq^2)\Big)\\
   & \cong H^3(X;\Z/2) \oplus \ker(\Sq^2_{\Z})
\end{align*}
where \(\Sq^2_{\Z}\) is the composition defined at the beginning of this section.
Finally, the diagram
\begin{equation*}
\xymatrix{
 {0}\ar[r] & {\KO^2_2} \ar@{=}[r]\ar[d]^{\eta} & {\rKO^2(X)} \ar[r]\ar[d]^{\eta} & {0} \ar[r]\ar[d] & {0} \\
 {0}\ar[r] & {\KO^1_2} \ar[r]                  & {\rKO^1(X)} \ar[r]              & {E_4^{1,0}}\ar[r] & {0}
 }
\end{equation*}
shows that \( \im(\rKO^2(X)\overset{\eta}\rightarrow\rKO^1(X)) \) \( \cong \) \( \im(\KO^2_2\overset{\eta}\rightarrow\KO^1_2) \).
So \( \trKOK{2}{(X)} \) is isomorphic to \(H^3(X;\Z/2) \oplus \ker(\Sq^2_{\Z}) \), as claimed.
\paragraph{Computation of KO\( ^4 \)/K.}
Proceeding as in the previous case, we consider the diagram
\begin{equation*}
\xymatrix{
  {0} \ar[r] & {H^4(X)} \ar@{=}[r]\ar@{->>}[d] & {\rKO^4(X)} \ar[r]\ar[d]^{\eta} & {0} \ar[r]\ar[d] & {0}\\
  {0} \ar[r] & {\KO^3_4} \ar[r] & {\rKO^3(X)} \ar[r] & {H^3(X)}\ar[r] & {0}
}
\end{equation*}
where the vertical map on the left is the composition
\begin{equation*}
  H^4(X;\KO^0(\point))\twoheadrightarrow H^4(X;\KO^{-1}(\point)) \twoheadrightarrow \coker({\Sq^2}\circ\pi) = \KO^3_4 \\
\end{equation*}
Since \( {\Sq^2}\circ\pi \) has the same cokernel as \( \Sq^2_{\Z} \), we may deduce that
\begin{equation*}
\trKOK{4}{(X)} \cong \im(\rKO^4(X)\overset{\eta}\rightarrow\rKO^3(X)) \cong \coker(\Sq^2_{\Z})
\end{equation*}
\paragraph{Computation of KO\( ^6 \)/K.}
In this case, the commutative diagram
\begin{equation*}
\xymatrix{
 {0} \ar[r] & {H^2(X)} \ar@{=}[r]\ar[d] & {\rKO^6(X)} \ar[r]\ar[d]^{\eta} & {0} \ar[r]\ar[d] & {0} \\
 {0} \ar[r] & {0}                       \ar[r]           & {\rKO^5(X)} \ar@{=}[r]   & {H^1(X)} \ar[r] & {0}
}
\end{equation*}
demonstrates that the map \( \rKO^6(X)\overset{\eta}\rightarrow\rKO^5(X) \) is zero, so \( (\rKO^{6}/\K)(X) \) is trivial.
This completes the computations of \( (\rKO^{2i}/\K)(X) \).
\endproof

\encouragepagebreak{\sectionair}
\section{Comparison}\label{sec:CS:comparison}
In this section, we finally compare our two sets of results for
complex curves and surfaces and prove the comparison theorem mentioned
in the introduction. As a warm-up, we review the situation one finds
in K-theory.

\subsubsection{K-groups}
The algebraic and topological K-groups of a smooth complex curve \( C \) are given by
\begin{align*}
   \K_0(C) &= \Z\oplus\Pic(C)\\
   \K^0(C) &= \Z \oplus H^2(C;\Z)
\end{align*}
The comparison map \( \K_0(C)\rightarrow \K^0(C) \) is always surjective. Indeed, the map is an isomorphism on the first summand, and for a projective curve the map on reduced groups is simply the projection from \( \Pic(C)\cong\Z\oplus\Jac(C) \) onto the free part. It follows that the map is still surjective if we remove a finite number of points from \( C \).

For a smooth complex surface \( X \), we have seen in \eqref{eq:grKalg} and \eqref{eq:grKtop} that we have filtrations on the K-groups such that
\begin{align*}
 \gr^*(\K_0(X)) &\cong \Z \oplus \Pic(X) \oplus \CH^2(X)\\
 \gr^*(\K^0(X)) &\cong \Z \oplus H^2(X;\Z) \oplus H^4(X;\Z)
\end{align*}
Both isomorphisms can be written as \( (\rank, c_1,c_2) \), and the
comparison map \( \K_0(X)\rightarrow \K^0(X) \) corresponds to the
usual comparison maps on the filtration.
Of course, on the first summand we again have the identity.
Moreover, the map \( \CH^2(X)\rightarrow H^4(X;\Z) \) is always surjective:

If \( X \) is projective, this follows from the fact that \( H^4(X;\Z) \) is generated by a point. In general, we can embed any smooth surface \( X \) into a projective surface \( \bar{X} \) as an open subset with complement a divisor with simple normal crossings (\cf Lemma~\ref{lem:SmoothVarieties}). Then \( \CH^2(\bar{X}) \) surjects onto \( \CH^2(X) \), and similarly \( H^4(\bar{X};\Z) \) surjects onto \( H^4(X;\Z) \), so that the claim follows. Since we will need this observation in a moment, we record it as a lemma.
\begin{lemma}\label{lem:ker_c1}
For any smooth complex surface \( X \), the natural map \( \CH^2(X)\rightarrow H^4(X;\Z) \) is surjective.
\end{lemma}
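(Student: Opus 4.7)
The plan is to follow the outline sketched in the paragraph preceding the statement: handle the projective case directly, then reduce the general case to it by means of a compactification.

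First I would dispatch the case when $X$ is projective. Then $X(\C)$ is a smooth closed connected oriented complex $4$-manifold, so $H^4(X;\Z) \cong \Z$ by Poincar\'e duality, generated by the cohomology class dual to a closed point $p \in X$. The same point defines a class in $\CH_0(X) = \CH^2(X)$ whose image under the cycle class map is precisely this generator, and surjectivity is immediate.

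For an arbitrary smooth complex surface $X$, I would invoke Lemma~\ref{lem:SmoothVarieties} to embed $X$ as an open subvariety of a smooth projective surface $\bar X$ with complement $Z := \bar X \setminus X$ a simple normal crossings divisor. The cycle class maps for $X$ and $\bar X$ fit into a commutative square whose top horizontal arrow $\CH^2(\bar X) \to H^4(\bar X;\Z)$ is surjective by the projective case, and whose left vertical restriction $\CH^2(\bar X) \to \CH^2(X)$ is surjective by the localization exact sequence for Chow groups $\CH_0(Z) \to \CH_0(\bar X) \to \CH_0(X) \to 0$. Surjectivity of $\CH^2(X) \to H^4(X;\Z)$ thus reduces to surjectivity of the right vertical arrow $H^4(\bar X;\Z) \to H^4(X;\Z)$.

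This cohomological surjection is the one non-formal point of the proof, and where I would expect the only real work to lie. From the long exact sequence of the pair $(\bar X, X)$ it suffices to establish that $H^5(\bar X, X;\Z) = 0$. Since $\bar X$ is a closed oriented smooth $4$-manifold and $Z$ is compact, Lefschetz duality gives $H^k(\bar X, X;\Z) \cong H_{4-k}(Z;\Z)$; in particular $H^5(\bar X, X;\Z) \cong H_{-1}(Z;\Z) = 0$, as required. (As a sanity check, if $X$ is not itself projective then $X(\C)$ is a non-compact connected oriented smooth $4$-manifold and already $H^4(X;\Z) \cong H^{BM}_0(X;\Z) = 0$ by Poincar\'e duality, trivialising the surjection in that case and confirming that no subtlety is hidden in the non-projective strata of $Z$.)
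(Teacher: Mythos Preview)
Your proof is correct and follows the same route as the paper: treat the projective case via the point class, then reduce the general case by compactifying and showing that both $\CH^2$ and $H^4$ surject from the compactification. The paper simply asserts that $H^4(\bar X;\Z)\twoheadrightarrow H^4(X;\Z)$, whereas you supply a clean justification via Lefschetz duality; your parenthetical sanity check in fact shows that this step is essentially vacuous, since a smooth non-projective complex surface is non-compact and hence has $H^4(X;\Z)=0$ outright.
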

\begin{corollary}\label{cor:comparison_K}
For any smooth complex surface \( X \), the natural map \( \K_0(X)\rightarrow\K^0(X) \) is surjective if and only if the natural map \( \Pic(X)\rightarrow H^2(X;\Z) \) is surjective.
\end{corollary}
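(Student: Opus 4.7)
The plan is to derive the corollary from Lemma~\ref{lem:ker_c1} by a short diagram chase on the filtrations of algebraic and topological K-theory. Since the comparison map acts as the identity on the rank component, the map \( \K_0(X)\rightarrow \K^0(X) \) is surjective if and only if its restriction to reduced groups \( \rK_0(X)\rightarrow \rK^0(X) \) is surjective. Using the filtrations that underlie \eqref{eq:grKalg} and \eqref{eq:grKtop}, together with the identification \( \ker(c_1)\cong H^4(X;\Z) \) via \( c_2 \) on the topological side (Corollary~\ref{cor:w2_Surface}), I would extract the two short exact sequences
\begin{equation*}
  0 \rightarrow \CH^2(X) \rightarrow \rK_0(X) \xrightarrow{c_1} \Pic(X) \rightarrow 0
\end{equation*}
and
\begin{equation*}
  0 \rightarrow H^4(X;\Z) \rightarrow \rK^0(X) \xrightarrow{c_1} H^2(X;\Z) \rightarrow 0.
\end{equation*}
The right-hand quotients are correct because \( c_1 \) is surjective in both cases: onto \( \Pic(X) \) by construction, and onto \( H^2(X;\Z) \) because every integral degree-two class is the first Chern class of a topological complex line bundle.

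The comparison map \( \rK_0(X)\rightarrow \rK^0(X) \) respects these filtrations and thus fits into a commutative ladder between the two sequences above, with \( \CH^2(X)\rightarrow H^4(X;\Z) \) on the left and \( \Pic(X)\rightarrow H^2(X;\Z) \) on the right. The left vertical map is surjective by Lemma~\ref{lem:ker_c1}. Applying the Five Lemma (or, equivalently, reading off the cokernels via the snake lemma), the middle vertical map is surjective if and only if the right-hand vertical map is, which is exactly the claim.

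The substantive geometric content is entirely absorbed into Lemma~\ref{lem:ker_c1}; the remaining work is purely formal homological algebra, so I do not anticipate a genuine obstacle. The only point requiring minimal care is to verify that the filtrations on \( \K_0(X) \) and \( \K^0(X) \) described earlier really do correspond under the comparison map, which is already asserted in the paragraph preceding Lemma~\ref{lem:ker_c1}.
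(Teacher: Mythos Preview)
Your proposal is correct and matches the paper's approach exactly: the paper does not spell out a separate proof for this corollary, but the paragraph immediately preceding Lemma~\ref{lem:ker_c1} sets up precisely the filtration comparison you describe, and the corollary is stated as an immediate consequence. Your explicit ladder of short exact sequences and the snake-lemma reading of cokernels simply make visible what the paper leaves implicit.
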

When \( X \) is projective, we see from the exponential sequence that we have surjections if and only if \( X \) has geometric genus zero. Equivalently, this happens if and only if \( X \) has full Picard rank, \ie if and only if its Picard number \( \rho \) agrees with its second Betti number \( b_2 \). More generally, the Picard group of any smooth complex surface can be written as
\begin{equation}\label{eq:Pic-surface}
 \Pic(X)=\Z^{\rho}\oplus H^2(X;\Z)_{\text{tors}}\oplus \Pic^0(X)
\end{equation}
where \( \rho\leq b_2 \) is an integer that generalizes the Picard number, \( H^2(X;\Z)_{\text{tors}} \) is the torsion subgroup of \( H^2(X;\Z) \), and \( \Pic^0(X) \) is a divisible group \cite{PedriniWeibel:Surfaces}*{Corollary~6.2.1}. Again, the natural map \( \Pic(X)\rightarrow H^2(X;\Z) \) is surjective if and only if \( \rho=b_2 \).

\encouragepagebreak{\subsectionair}
\subsection{The classical Witt group}
For the classical Witt group \( \W^0(X) \), the situation can be analysed in a similar way as in the case of \( \K_0(X) \), using our description in terms of Stiefel-Whitney classes.
\begin{proposition}\label{prop:comparison_0}
 For a smooth complex curve \( C \), the map
 \begin{align*}
 && \gw^0\colon &\GW^0(C) \twoheadrightarrow \KO^0(C)\quad\text{is surjective, and}\\
 && \w^0\colon  &\W^0(C)  \overset{\cong}\rightarrow \tKOK{0}{(C)}\quad\text{is an isomorphism.}
 \end{align*}
 Similarly, for a smooth complex surface \( X \), both \( \gw^0 \) and
 \( \w^0 \) are surjective, and  \( \w^0 \) is an isomorphism if and only if \( \Pic(X) \) surjects onto \( H^2(X;\Z) \).
\end{proposition}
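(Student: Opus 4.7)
The plan is to filter both source and target of $\w^0$ and $\gw^0$ by rank and the first two Stiefel-Whitney classes. By Proposition~\ref{prop:Stiefel-Whitney-classes} these filtrations are respected by the comparison maps, and the graded pieces are controlled by \'etale (resp.\ singular) cohomology with $\Z/2$-coefficients together with auxiliary contributions from $\Pic$, $H^2(X;\Z)$ and $\CH^2(X)$. Since \'etale and singular cohomology with $\Z/2$-coefficients agree on any smooth complex variety, most graded pieces will match up automatically.

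For $\w^0$, Theorem~\ref{thm:W_Surface} and Proposition~\ref{prop:KOK_Surfaces} identify the three graded pieces and show that the rank and $\bar{w}_1$ pieces are already isomorphisms. The $\bar{w}_2$ piece is the map
\[
   H^2_{\et}(X;\Z/2)/\Pic(X) \longrightarrow H^2(X;\Z/2)/H^2(X;\Z)
\]
induced by the isomorphism of the numerators. Since the image of $\Pic(X)$ in $H^2(X;\Z/2)$ factors through $H^2(X;\Z)$, this map is always surjective, and it is injective exactly when these two images coincide. For a curve, both denominators vanish (either trivially or via the class of a point, $\Pic(C)/2 = H^2(C;\Z)/2 = H^2_{\et}(C;\Z/2)$), so $\w^0$ is an isomorphism. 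For a surface, the formula \eqref{eq:Pic-surface} for $\Pic(X)$ will show that the two images agree iff $\rho = b_2$, i.e.\ iff $\Pic(X)\to H^2(X;\Z)$ is surjective; in every case $\w^0$ is surjective.

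For $\gw^0$, I extend the filtration on $\GW^0(X)$ and $\KO^0(X)$ by one further step at $\ker(w_1)\cap\ker(w_2)$. The first three graded pieces coincide with those of $\w^0$ and are surjective by the previous paragraph. The fourth piece is identified on the algebraic side by Corollary~\ref{cor:etale-w2_Surface} with $H^0(\ker c_1^{\mathrm{alg}})\subset\rGW^0(X)$, and on the topological side by Corollary~\ref{cor:w2_Surface} with $r(\ker c_1^{\mathrm{top}})\subset\rKO^0(X)$. The key technical point---which is the hardest thing to get right---is the computation that $\gw^0$ sends a hyperbolic class $H^0(V)$ to the positive-definite real subbundle of $V\oplus V^{\dual}$, which for any choice of Hermitian metric on $V$ is the diagonal $\{(v,\bar v)\}$ and hence simply $r(V_{\mathrm{top}})$ (and, crucially, not $2r(V_{\mathrm{top}})$). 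The map on the fourth graded piece is therefore induced by realification, and is surjective because $\ker c_1^{\mathrm{alg}}\to\ker c_1^{\mathrm{top}}$ is onto by Lemma~\ref{lem:ker_c1} (since $\CH^2(X)\to H^4(X;\Z)$ is surjective). With all graded pieces surjective, so is $\gw^0$.
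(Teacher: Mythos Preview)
Your argument is correct and follows essentially the same route as the paper: filter by rank and the first two Stiefel-Whitney classes, invoke Corollaries~\ref{cor:etale-w2_Surface} and~\ref{cor:w2_Surface} to identify the bottom graded piece of $\gw^0$, and conclude surjectivity there via Lemma~\ref{lem:ker_c1}. One remark: the computation you flag as ``the hardest thing to get right''---that $\gw^0(H^0(V)) = r(V_{\mathrm{top}})$---is in fact automatic from the compatibility of the comparison maps with the Karoubi and Bott sequences (diagram~\eqref{seq:Karoubi-comparison}), so no direct analysis of the positive-definite subbundle is required; also, the first three graded pieces for $\gw^0$ are $\Z$, $H^1$, $H^2$ (not $H^2/\Pic$), though this does not affect the conclusion since the comparison map is an isomorphism on each.
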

\begin{proof}
It suffices to show the corresponding statements for reduced groups. Let \( X \) be a smooth complex variety of dimension at most two. Since the first Stiefel-Whitney classes are always surjective, we have a row exact commutative diagram of the form
\begin{equation*}
  \xymatrix{
    {0} \ar[r]
  & {\ker(w_1)}   \ar[r]        \ar[d]
  & {\rGW^0(X)}   \ar[r]^-{w_1} \ar[d]^{\gw^0}
  & {H^1_{\et}(X;\Z/2)} \ar[r]  \ar[d]^{\cong}
  & {0}
  \\
    {0} \ar[r]
  & {\ker(w_1^{\topl})}    \ar[r]
  & {\rKO^0{(X)}}  \ar[r]^-{w_1^{\topl}}
  & {H^1(X;\Z/2)}  \ar[r]
  & {0}
  }
\end{equation*}
Similarly, by (the proof of) Corollary~\ref{cor:etale-w2_Surface} and by Corollary~\ref{cor:w2_Surface} we have a row exact commutative diagram
\begin{equation*}
  \xymatrix{
    {0} \ar[r]
  & {H^0(\ker(c_1))}\ar[r]\ar[d]
  & {\ker(w_1)}\ar[d] \ar[r]^-{w_2}
  & {H^2_{\et}(X;\Z/2)} \ar[r]\ar[d]^{\cong}
  & {0}
  \\
    {0} \ar[r]
  & {r(\ker(c_1^{\topl}))}\ar[r]
  & {\ker(w_1^{\topl})}  \ar[r]^-{w_2^{\topl}}
  & {H^2(X;\Z/2)} \ar[r]
  & {0}
  }
\end{equation*}
In both diagrams, we have written \(w_i^{\topl}\) and \(c_i^{\topl}\) for the topological characteristic classes to avoid ambiguous notation.
The kernels of \(c_1 \) and \( c_1^{\topl} \) can be identified with \( \CH^2(X) \) and \( H^4(X;\Z) \), respectively, via the second Chern classes. Thus, Lemma~\ref{lem:ker_c1} implies that the vertical map on the left of the lower diagram is a surjection. The surjectivity of the comparison map \( \gw^0 \) on \( \GW^0(X) \) follows.

If we apply the same analysis to \( \W^0(X) \), then the second diagram reduces to a commutative square
\begin{equation*}
  \xymatrix{
     {\ker(\bar{w}_1)}\ar[d] \ar[r]^-{\bar{w}_2}_-{\cong}
  &  {\quotient{H^2_{\et}(X;\Z/2)}{\Pic(X)}}\ar@{->>}[d]
  \\
     {\ker(\bar{w}_1^{\topl})}  \ar[r]^-{\bar{w}_2^{\topl}}_-{\cong}
  &  {\quotient{H^2(X;\Z/2)}{H^2(X;\Z)}}
  }
\end{equation*}
When \( X \) is a curve, the vertical arrow on the right is an isomorphism, proving the claim. In general, we have a row-exact commutative diagram of the following form:
\begin{equation*}
\xymatrix{
   {0}         \ar[r]
 & {\quotient{\Pic(X)}{2}} \ar[r] \ar@{>->}[d]
 & {H^2_{\et}(X;\Z/2)} \ar[r]\ar[d]^{\cong}
 & {\quotient{H^2_{\et}(X;\Z/2)}{\Pic(X)}} \ar[r] \ar@{->>}[d]
 & {0}
 \\ {0} \ar[r]
 & {\quotient{H^2(X;\Z)}{2}} \ar[r]
 & {H^2(X;\Z/2)} \ar[r]
 & {\quotient{H^2(X;\Z/2)}{H^2(X;\Z)}} \ar[r]
 & {0}
 }
\end{equation*}
If \( \Pic(X)\rightarrow H^2(X;\Z) \) is surjective, then the two outer maps become isomorphisms and it follows that the comparison map \( \w^0 \) is also an isomorphism. Using the description of the Picard group of \( X \) given by \eqref{eq:Pic-surface}, we see that the converse is also true.
\end{proof}

\encouragepagebreak{\subsectionair}
\subsection{Shifted Witt groups}\label{sec:CS:comparison:shifted}
We now generalize Proposition~\ref{prop:comparison_0} to shifted
groups. As indicated in the introduction and in
Remark~\ref{rem:disclaimer}, we will tacitely rely on the Standing
Assumptions~1.9 of \cite{Me:WCCV} in all that follows.  The final
result is stated in Theorems~\ref{thm:comparison_Curves} and
\ref{thm:comparison_Surfaces}.

For curves, there is in fact very little left to be shown. We nevertheless give a detailed proof in preparation for a similar line of argument in the case of surfaces.
\begin{theorem}\label{thm:comparison_Curves}
 For any smooth complex curve \( C \), the maps
\shortendisplayskip
 \begin{alignat*}{3}
 \gw^i\colon{} && \GW^i(C)&\rightarrow \KO^{2i}(C)
 &&\quad\text{are surjective, and the maps}\\
 \w^i\colon{}  &&  \W^i(C)&\rightarrow \tKOK{2i}{(C)}
 &&\quad\text{are isomorphisms.}
 \end{alignat*}
\end{theorem}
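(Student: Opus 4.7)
The theorem splits by the four-fold periodicity of both sides. The case $i=0$ is exactly Proposition~\ref{prop:comparison_0}. For $i=2,3$ both groups vanish: Theorem~\ref{thm:W_Curve} gives $\W^2(C)=\W^3(C)=0$, and the table at the end of Section~\ref{sec:CS:Curves} (a specialisation of Proposition~\ref{prop:KOK_Surfaces} to the one-dimensional homotopy type $C(\C)$) gives $\tKOK{4}{(C)}=\tKOK{6}{(C)}=0$, so $\w^2$ and $\w^3$ are trivially isomorphisms of zero groups. The substantive content is the case $i=1$.

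When $C$ is affine both $\W^1(C)$ and $\tKOK{2}{(C)}$ vanish and nothing remains; when $C$ is projective both groups are $\Z/2$, so it suffices to exhibit a single element of $\W^1(C)$ with non-zero image in $\tKOK{2}{(C)}$. My plan is to reduce this to a closed point $p\in C$, where the $i=0$ case of Proposition~\ref{prop:comparison_0} applies directly. Setting $U=C-p$, the localisation ladder~\eqref{eq:localization-comparison} (with trivial twist, since $\det T_p C$ is trivial on a point) gives a commutative square
$$
\xymatrix{
\W^0(p) \ar[r]^{\partial} \ar[d]_{\cong}^{\w^0} & \W^1(C) \ar[d]^{\w^1}\\
\tKOK{0}{(p)} \ar[r]^{\partial^{\topl}} & \tKOK{2}{(C)}
}
$$
whose left vertical is an isomorphism by Proposition~\ref{prop:comparison_0}. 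The top boundary $\partial$ is surjective because $\W^1(U)=0$ by Theorem~\ref{thm:W_Curve}; since both source and target equal $\Z/2$, $\partial$ is an isomorphism. Similarly, $\tKOK{2}{(U)}=0$ by Proposition~\ref{prop:KOK_Surfaces} applied to the one-dimensional $U(\C)$, so $\partial^{\topl}$ is surjective and hence itself an isomorphism of $\Z/2$'s. Commutativity then forces $\w^1$ to be an isomorphism.

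With $\w^i$ established as an isomorphism for every $i$, the surjectivity of $\gw^i$ follows from the right-hand cokernel portion of the Karoubi ladder~\eqref{seq:Karoubi-comparison}:
$$
\xymatrix{
\K_0(C) \ar[r]\ar[d] & \GW^i(C) \ar[r]\ar[d]^{\gw^i} & \W^i(C) \ar[r]\ar[d]^{\w^i} & 0 \\
\K^0(C) \ar[r] & \KO^{2i}(C) \ar[r] & \tKOK{2i}{(C)} \ar[r] & 0
}
$$
The left-hand vertical is surjective by Corollary~\ref{cor:comparison_K}, whose hypothesis $\Pic(C)\twoheadrightarrow H^2(C;\Z)$ is automatic for curves. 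A routine diagram chase then produces surjectivity of $\gw^i$.

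The principal obstacle lies in the $i=1$ Witt step: the generator $\Psi_g$ of $\W^1(C)$ from Remark~\ref{rem:generatorW1C} is not manifestly a KO-theoretic class, so direct topological evaluation is awkward. The localisation reduction above neatly sidesteps this issue by transporting the comparison from $C$ down to a single point, where $\w^0$ is already understood.
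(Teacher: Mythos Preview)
Your argument and the paper's share the same core move---reduce the projective case to the affine complement of a point via localisation---but the organisation differs. The paper runs the full localisation ladder \eqref{seq:localizationCp} once, with $\GW$/$\KO^{2i}$ on the left and $\W$/$\KO^{2i-1}$ on the right, and extracts surjectivity of every $\gw^i$ and injectivity of every $\w^i$ simultaneously by four-lemma chases; surjectivity of $\w^i$ then drops out of the Karoubi ladder. You instead establish each $\w^i$ as an isomorphism case by case and only afterwards deduce surjectivity of $\gw^i$ via Karoubi.

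There is one point in your $i=1$ step that needs justification. Your square has $\tKOK{0}{(p)}\to\tKOK{2}{(C)}$ along the bottom, and you conclude that $\partial^{\topl}$ is onto because $\tKOK{2}{(U)}=0$. But the ladder \eqref{eq:localization-comparison} you invoke has $\KO^{2i-1}$ in its lower row, not the quotients $\tKOK{2i}{}$; there is no ready-made long exact localisation sequence for the latter. The exactness you use \emph{does} hold---it follows from right-exactness of cokernels applied to the compatible $\K$/$\KO$ Gysin sequences, using $\K^{-1}(p)=\KO^1(p)=0$ so that both $\K^0(C)\to\K^0(U)$ and $\KO^2(C)\to\KO^2(U)$ are surjective---but this deserves a sentence. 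Alternatively, stay in the paper's $\KO^{2i-1}$ ladder and note that $\KO^0(C)\to\KO^0(U)$ is onto (both reduced groups are detected by $w_1$), so the boundary $\KO^{-1}(p)\to\KO^1(C)$ is injective; since $\KO^{-1}(p)=\Z/2$ this is all you need. A minor citation slip: Corollary~\ref{cor:comparison_K} is stated for surfaces; the surjectivity of $\K_0(C)\to\K^0(C)$ for curves is the discussion immediately preceding it.
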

\begin{proof}
We know that the comparison maps appearing here are isomorphisms on a
point (\cf \cite{Me:WCCV}*{Section~2.2}), so the claims are equivalent to the corresponding claims involving reduced groups. It suffices to show that the maps \( \gw^i\colon \rGW^i(C)\rightarrow \rKO^{2i}(C) \) are surjective and that the maps \( \w^i\colon \rW^i(C)\rightarrow\rKO^{2i-1}(C) \) are injective.

First, suppose \( C \) is affine. Then the cohomology of \( C \) is concentrated in degrees \( 0 \) and \( 1 \) and we see that \( \W^1(C) \), \( \W^2(C) \), \( \W^3(C) \) and \( \rKO^2(X) \), \( \rKO^4(C) \), \( \rKO^6(C) \) all vanish. Thus, the claims are trivially true.

The case that \( C \) is projective can be reduced to the affine case. Indeed, if \( p \) is any point on \( C \), then \( \tilde{C}:=C-p \) is affine. By comparing the localization sequences arising from the inclusion of \( \tilde{C} \) into \( C \), we see that the comparison maps for \( C \) must also have the desired properties:

\begin{samepage}
\begin{equation}\label{seq:localizationCp}
\vcenter{\raisebox{0pt}[0pt][15pt]{
\xymatrix@C=8pt{
   {\dots} \ar[r]
 & {\GW^{i-1}(p)}\ar[r]\ar[d]^{\cong}
 & {\rGW^i(C)} \ar[r]\ar@{..>}[d]
 & {\rGW^i(\tilde{C})} \ar[r]\ar@{->>}[d]
 & {\W^{i}(p)} \ar[r]\ar[d]^{\cong}
 & {\rW^{i+1}(C)} \ar[r]\ar@{..>}[d]
 & {\rW^{i+1}(\tilde{C})}\ar[r]\ar@{>->}[d]
 & {\dots}
 \\
   {\dots} \ar[r]
 & {\KO^{2i-2}(p)} \ar[r]
 & {\rKO^{2i}(C)} \ar[r]
 & {\rKO^{2i}(\tilde{C})} \ar[r]
 & {\KO^{2i-1}(p)} \ar[r]
 & {\rKO^{2i+1}(C)} \ar[r]
 & {\rKO^{2i+1}(\tilde{C})} \ar[r]
 & {\dots}
}
}}
\end{equation}
\vspace{5pt}

\qedhere
\end{samepage}
\end{proof}
\begin{corollary}\label{cor:comparison-twist_Curves}
 Theorem~\ref{thm:comparison_Curves} also hold for groups with twists in any line bundle.
\end{corollary}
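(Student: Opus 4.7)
The plan is to reduce to Theorem~\ref{thm:comparison_Curves} via the same localization argument that concluded its proof. First, the \GWorW and KO-groups twisted by a line bundle $\lb{L}$ depend on $\lb{L}$ only through its class in $\Pic(C)/2$. For affine curves $\Pic(C)$ is 2-divisible, so $\Pic(C)/2 = 0$ and no non-trivial twists exist; the theorem already covers this case. For projective curves, $\Pic(C)/2$ has order at most two and is generated by the class of $\OO(p)$ for any closed point $p \in C$, so it suffices to treat the single twist $\lb{L} = \OO(p)$.

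With this reduction in hand, I would apply the localization ladder~\eqref{eq:localization-comparison} to the pair $(C, \tilde{C})$ with $\tilde{C} := C - p$, twisting $C$ by $\OO(p)$; this produces the twisted analogue of diagram~\eqref{seq:localizationCp}. Two observations ensure that only the $C$-columns of the resulting ladder acquire a non-trivial twist. First, the restriction $\OO(p)|_{\tilde{C}}$ is trivial on the affine curve $\tilde{C}$ (it admits a nowhere-vanishing section), so the $\tilde{C}$-columns are formally identical to those in~\eqref{seq:localizationCp}. Second, the twist $\lb{L}' = \det(\vb{N}_{p/C}) \otimes \OO(p)|_p$ appearing on the point $p$ is necessarily trivial, since $p$ is zero-dimensional and every line bundle over a point is trivial; hence the $p$-columns are also unchanged.

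The comparison maps on the $p$-columns are isomorphisms, and by Theorem~\ref{thm:comparison_Curves} the maps on the $\tilde{C}$-columns are surjective on $\rGW^i$ and bijective on $\rW^i$. A five-lemma chase centered on $\rGW^i(C, \OO(p))$ then yields surjectivity of $\gw^i$, and an analogous chase centered on $\rW^i(C, \OO(p))$ yields injectivity of $\w^i$ into $\rKO^{2i-1}(C, \OO(p))$. Surjectivity of $\w^i$ onto $\tKOK{2i}{(C, \OO(p))}$ is then automatic from the surjectivity of $\gw^i$ combined with the Karoubi ladder~\eqref{seq:Karoubi-comparison}, upgrading $\w^i$ to the desired isomorphism. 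The hard part, if there is one, is purely bookkeeping: verifying the triviality of $\OO(p)|_{\tilde{C}}$ and of $\lb{L}'$ on $p$, both of which are immediate; the diagram chase itself is a verbatim copy of the one implicit in the proof of Theorem~\ref{thm:comparison_Curves}.
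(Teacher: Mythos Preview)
Your argument is correct and follows the same localization-ladder idea as the paper: twist the sequence~\eqref{seq:localizationCp} and observe that only the $C$-columns change. The one difference is in the preliminary reduction. You invoke the structure of $\Pic(C)/2$ to reduce to the single twist $\OO(p)$, whereas the paper skips this and handles an arbitrary $\OO(D)$ with $D=\sum n_i p_i$ directly, by removing \emph{all} the points $p_i$ at once and appealing to the untwisted theorem on the affine complement $C-\bigcup_i p_i$. Both routes are short; yours uses slightly more input (the computation of $\Pic(C)/2$) but removes only one point, while the paper's is agnostic about $\Pic(C)/2$ at the cost of a marginally longer localization. One small inaccuracy: in the ladder the vertical maps on $\rW^i(\tilde C)$ land in $\rKO^{2i-1}(\tilde C)$, not in the quotient $\tKOK{2i}{(\tilde C)}$, so they are only injective there, not bijective---but injectivity is all the four-lemma needs, so your chase goes through unchanged.
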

\begin{proof}
Introducing a twist by the line bundle \(\OO(p)\) into the localization sequence \eqref{seq:localizationCp} only affects the groups of \(C\), so we can conclude as before. More generally, given a line bundle \(\OO(D) \) associated with a divisor \(D=\sum n_i p_i\) on \(C\), we can similarly reduce to the case of a trivial line bundle over \(C-\bigcup_i p_i\).
\end{proof}

We now want to imitate this proof for surfaces, replacing the role of points on the curve by curves on the surface. We first prove the following.

\begin{proposition}\label{prop:comparison_Surfaces}
 For any smooth complex surface \( X \), the comparison maps have the properties indicated by the following arrows:
 \begin{equation*}%\label{lbl:star}
  %\tag{\( \bigstar \)}
  \begin{cases}
   \gw^0\colon{\GW^0(X)\twoheadrightarrow\KO^0(X)} \\
   \;\w^1\colon{\;\W^1(X)\rightarrowtail\KO^1(X)}
  \end{cases}\\
  \begin{cases}
   \gw^2\colon{\GW^2(X)\twoheadrightarrow\KO^4(X)}\\
   \;\w^3\colon{\;\W^3(X)\rightarrowtail\KO^5(X)}
  \end{cases}
\end{equation*}
\end{proposition}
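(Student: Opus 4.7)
The first and last of the four claims are immediate: the surjectivity of $\gw^0$ is Proposition~\ref{prop:comparison_0}, while $\w^3$ is vacuously injective since $\W^3(X) = 0$ by Theorem~\ref{thm:W_Surface}. I concentrate on $\w^1$ and $\gw^2$, and would treat both by a common localization strategy that pushes the problem onto a curve (where Theorem~\ref{thm:comparison_Curves} and Corollary~\ref{cor:comparison-twist_Curves} apply) and an affine open piece on which the Witt and KO groups are easy to compare.

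The geometric input is to choose a smooth closed subvariety $Z \subset X$ of pure dimension one---a disjoint union of smooth curves---such that the complement $U := X \setminus Z$ is affine and satisfies $\Pic(U)/2 = 0$. Since $\Pic(X)/2$ is finite (a quotient of $\mathrm{NS}(\bar X)/2$ for any smooth projective compactification $\bar X$ of $X$), I can pick the components of $Z$ to represent a generating set of $\Pic(X)/2$, add an ample divisor to guarantee affineness of $U$, and apply Bertini with a general-position argument to make the components smooth and pairwise disjoint. For the resulting $U$, Artin vanishing gives $H^i_{\et}(U;\Z/2) = 0$ for $i > 2$, so Theorem~\ref{thm:W_Surface} collapses to $\W^1(U) = 0$, $\W^3(U) = 0$, and $\W^2(U) = \CH^2(U)/2$. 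On the topological side, $U(\C)$ has the homotopy type of a 2-complex (Andreotti--Frankel), and the exponential sequence on the Stein manifold $U(\C)$ combined with the algebraic-equals-analytic Picard group on smooth affine varieties yields $\Pic(U) \cong H^2(U(\C);\Z)$. Hence $\Pic(U)/2 = 0$ forces $H^2(U;\Z)/2 = 0$, so $\K_0(U) \to \K^0(U)$ is surjective; comparing the Karoubi sequences on $U$ then yields surjectivity of $\gw^2_U \colon \GW^2(U) \to \KO^4(U)$.

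With these ingredients I would run two diagram chases in the ladders of Witt/GW and KO localization sequences, using the twist $\lb{L}'$ determined by the normal bundle of $Z$ in $X$. For $\gw^2_X$ surjective: the Witt row $\GW^1(Z,\lb{L}') \to \GW^2(X) \to \GW^2(U) \to 0$ sits above $\KO^2(Z,\lb{L}') \to \KO^4(X) \to \KO^4(U) \to \KO^3(Z,\lb{L}')$. Given $y \in \KO^4(X)$, its image in $\KO^4(U)$ lies in the image of $\gw^2_U$, lifts to $\GW^2(U)$, and pulls back to some $\tilde x \in \GW^2(X)$; the discrepancy $\gw^2_X(\tilde x) - y$ lies in the image from $\KO^2(Z,\lb{L}')$ and is absorbed by an element of $\GW^1(Z,\lb{L}')$ using surjectivity of $\gw^1_Z$ from the twisted curves theorem. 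For $\w^1_X$ injective: the Witt row $\GW^0(U) \to \W^0(Z,\lb{L}') \to \W^1(X) \to \W^1(U) = 0$ sits over $\KO^0(U) \to \KO^{-1}(Z,\lb{L}') \to \KO^1(X) \to \KO^1(U)$. A class $\alpha \in \ker(\w^1_X)$ lifts to some $\beta \in \W^0(Z,\lb{L}')$; its image $\w^0_Z(\beta)$ comes from $\KO^0(U)$, which lifts to $\GW^0(U)$ by surjectivity of $\gw^0_U$ (Proposition~\ref{prop:comparison_0}); the injectivity of $\w^0_Z$ (an isomorphism by Theorem~\ref{thm:comparison_Curves}) then forces $\beta$ to be a boundary, so $\alpha = 0$.

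The main obstacle I expect is the geometric step: for an arbitrary smooth complex surface---not necessarily projective---ensuring that smooth, pairwise disjoint curves in $X$ representing a generating set of $\Pic(X)/2$ can be produced together with an extra ample divisor making the complement affine, all while keeping control of the twist line bundle $\lb{L}'$. Once $Z$ and $U$ are in hand, the remainder is pure diagram chasing supplemented by the curve and point cases already established.
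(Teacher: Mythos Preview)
Your overall strategy---localize along curves to reduce to an affine open $U$ with $\Pic(U)/2=0$, then feed the curve and point cases into a diagram chase---is exactly the paper's. The diagram chases you outline for $\w^1$ and $\gw^2$ are fine, and your observation that $\gw^0$ and $\w^3$ are already done (by Proposition~\ref{prop:comparison_0} and $\W^3(X)=0$, respectively) is a nice shortcut the paper does not bother to isolate.

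The gap is precisely where you flag it: the geometric step of producing a \emph{smooth} (in particular, disjoint-components) closed subscheme $Z$ whose classes generate $\Pic(X)/2$ and whose complement is affine. In general no such $Z$ exists. Already for $X=\P^1\times\P^1$, any two irreducible curves $C_1,C_2$ whose classes generate $\Pic(X)/2\cong(\Z/2)^2$ must both have bidegree $(a_i,b_i)$ with $a_i,b_i$ not both even; one checks that for the complement to be affine one needs $a_i,b_i>0$, and then $C_1\cdot C_2=a_1b_2+a_2b_1>0$, so $C_1$ and $C_2$ necessarily meet. Bertini moves curves within a linear system but cannot change intersection numbers, so no ``general-position'' argument will separate them. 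More generally, an ample curve meets every curve, so you can never add an ample component and keep $Z$ smooth once $Z$ already contains a curve.

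The paper sidesteps this completely by removing the curves \emph{one at a time}. It chooses smooth curves $C_1,\dots,C_\rho$ representing generators of $\Pic(X)/2$ (Lemma~\ref{lem:PicMod2}) and sets $U_i:=X-C_1-\cdots-C_i$; each $U_i$ is affine (the $C_j$ are taken ample on a compactification), and $\Pic(U_\rho)/2=0$. One then applies the localization ladder to each inclusion $U_{i+1}\hookrightarrow U_i$, whose closed complement is the single smooth curve $C_{i+1}|_{U_i}$, and climbs back up by induction using Corollary~\ref{cor:comparison-twist_Curves} at each step. This requires no disjointness whatsoever. If you rewrite your argument with this iterative removal in place of the single $Z$, everything you wrote goes through.
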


\begin{lemma}\label{lem:S_h:aff_rank0}
Proposition~\ref{prop:comparison_Surfaces} is true when \( X \) is affine and \( \Pic(X)/2 \) vanishes.
\end{lemma}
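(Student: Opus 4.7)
The plan is to leverage how strongly the two hypotheses collapse both sides of the comparison. First, a smooth affine complex variety is Stein, so Cartan's Theorem~B applied to the exponential sequence gives $\Pic(X)\cong H^2(X(\C);\Z)$, and hence $\Pic(X)/2=0$ forces $H^2(X(\C);\Z)/2=0$. Combined with Andreotti--Frankel (which bounds the homotopy dimension of $X(\C)$ by two) and the universal coefficient theorem, this yields $H^i(X(\C);\Z/2)=0$ for $i\geq 2$ and $H^i(X(\C);\Z)=0$ for $i\geq 3$. Artin vanishing together with Artin comparison give the analogous vanishing $H^i_{\et}(X;\Z/2)=0$ for $i\geq 2$.

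Feeding these into Theorem~\ref{thm:W_Surface} trivialises most of the Witt groups that appear: the Steenrod square $S^1\colon\CH^1(X)/2\to\CH^2(X)/2$ has vanishing source (as $\CH^1(X)=\Pic(X)$) and $H^3_{\et}(X;\Z/2)=0$, whence $\W^1(X)=0$; and $\W^3(X)=0$ holds in general. The injectivity assertions for $\w^1$ and $\w^3$ are therefore vacuous, while the surjectivity of $\gw^0$ is already covered in full generality by Proposition~\ref{prop:comparison_0}.

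All that remains is the surjectivity of $\gw^2\colon\GW^2(X)\to\KO^4(X)$. The Atiyah--Hirzebruch spectral sequence for KO-theory, evaluated against the now very restricted cohomology of $X(\C)$, collapses in the relevant region to give $\KO^3(X)=0$ and $\KO^4(X)\cong H^0(X;\Z)=\Z$; the Bott sequence then makes $\K^0(X)\to\KO^4(X)$ surjective. On the algebraic side, Corollary~\ref{cor:comparison_K} together with $\Pic(X)\cong H^2(X;\Z)$ shows that $\K_0(X)\to\K^0(X)$ is surjective. Commutativity of \eqref{seq:Karoubi-comparison} at $i=2$ identifies the composite $\K_0(X)\to\GW^2(X)\xrightarrow{\gw^2}\KO^4(X)$ with the surjective composite $\K_0(X)\to\K^0(X)\to\KO^4(X)$, giving the desired surjectivity. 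The only mild subtlety is recognising that the two hypotheses together trivialise essentially every obstruction group in Theorems~\ref{thm:W_Surface} and~\ref{prop:KOK_Surfaces}, reducing the lemma to this final hyperbolic-map argument.
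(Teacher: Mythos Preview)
Your overall strategy matches the paper's, but there is a genuine gap in the Cartan~B step. The exponential sequence together with Cartan's Theorem~B identifies the \emph{analytic} Picard group of the Stein space \(X(\C)\) with \(H^2(X(\C);\Z)\); it says nothing about the algebraic Picard group, which is what the hypothesis \(\Pic(X)/2=0\) refers to. These need not agree for affine surfaces: if \(Y\) is a generic K3 surface with \(\Pic(Y)=\Z\) and \(C\subset Y\) is a smooth ample generator, then \(X=Y\setminus C\) is affine with \(\Pic(X)=0\), yet \(H^2(X;\Z)\) has rank \(21+2g(C)\). So your deductions that \(H^2(X;\Z)/2=0\), that \(H^2_{\et}(X;\Z/2)=0\), and that \(\K_0(X)\to\K^0(X)\) is surjective are all unjustified, and in fact false in such examples.

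Fortunately none of these claims are actually needed. For \(\W^1(X)=0\) you only use \(\Pic(X)/2=0\) (hypothesis) and \(H^3_{\et}(X;\Z/2)=0\) (Artin vanishing in degree \(>\dim X\)); the vanishing of \(H^2_{\et}\) plays no role. For \(\gw^2\), your own AHSS computation already shows \(\KO^4(X)\cong\KO^4(\mathrm{point})\), i.e.\ \(\rKO^4(X)=0\); since \(\gw^2\) is an isomorphism on a point this immediately gives surjectivity, and this is exactly how the paper argues. If you prefer to keep your hyperbolic-map route, note that the composite \(\K_0(X)\to\K^0(X)\to\KO^4(X)\) is surjective simply because it is already surjective on the rank summand \(\Z\cdot[\mathcal O]\); you do not need \(\K_0(X)\to\K^0(X)\) itself to be surjective.
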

\begin{proof}
By the theorem of Andreotti and Frankel, a smooth complex affine variety of dimension \( n \) has the homotopy type of a CW complex of real dimension at most \( n \) \citelist{\cite{AndreottiFrankel},\cite{Lazarsfeld}*{3.1}}. In particular, its cohomology is concentrated in degrees \( \leq n \) . For a smooth affine surface \( X \), Pardon's spectral sequence shows immediately that \( \W^1(X) = \W^2(X) = \W^3(X) = 0 \). Similarly, the Atiyah-Hirzebruch spectral sequence shows that \( \rKO^4(X) \) vanishes. Thus, three of the four claims are trivially satisfied. The fact that \( \gw^0\colon{\GW^0(X)\twoheadrightarrow\KO^0(X)} \) is surjective was already shown in \ref{prop:comparison_0}.
\end{proof}

Before proceeding with the proof of Proposition~\ref{prop:comparison_Surfaces}, we make a note of two general facts that we will use. First, we will need the following standard consequence of Hironaka's resolution of singularities:
\begin{lemma}\label{lem:SmoothVarieties}
 In characteristic zero, any smooth variety can be embedded into a smooth compact variety with complement a divisor with simple normal crossings.
\end{lemma}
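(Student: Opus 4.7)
The plan is to reduce to Hironaka's resolution of singularities in two steps: first produce some compactification, then improve it so that it is smooth and so that the boundary is a simple normal crossings divisor.

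The first step is to embed $X$ as a dense open subset of some proper (possibly singular) variety $X_0$. If $X$ is already known to be quasi-projective, this is immediate by taking the closure in projective space. In general, this requires Nagata's compactification theorem, which provides a proper variety $X_0$ containing $X$ as an open subvariety.

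The second step is to resolve the singularities of $X_0$ away from $X$. Since we are in characteristic zero, Hironaka's theorem provides a proper birational morphism $\pi_1\colon X_1 \to X_0$ with $X_1$ smooth which is an isomorphism over the smooth locus of $X_0$; in particular, since $X$ is smooth, $\pi_1^{-1}(X) \cong X$ and we may regard $X$ as an open subvariety of the smooth proper variety $X_1$.

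The third step is to improve the boundary. The closed complement $Z := X_1 \setminus X$ has some reduced structure as a closed subvariety of $X_1$. Applying Hironaka's embedded resolution of singularities to $Z \subset X_1$, we obtain a proper birational morphism $\pi\colon \bar X \to X_1$ which is an isomorphism over $X_1 \setminus Z = X$ and such that $\pi^{-1}(Z)$ is a simple normal crossings divisor in the smooth variety $\bar X$. Then $X \hookrightarrow \bar X$ is the desired embedding, with complement $\pi^{-1}(Z)$ a simple normal crossings divisor.

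The only real content is Hironaka's theorem, which is being invoked as a black box; the only point to be careful about is that we need both resolution of singularities (to make $X_1$ smooth in the first step) and embedded resolution (to put the boundary into normal crossings form in the second step), and that each of these is an isomorphism over the locus that is already in good shape, so that $X$ is preserved throughout.
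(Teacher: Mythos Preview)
Your proof is correct and follows essentially the same route as the paper: compactify, resolve the ambient singularities without disturbing the smooth locus, then apply a form of Hironaka to the boundary. The only cosmetic difference is that the paper invokes the Principalization Theorem for the ideal sheaf of the complement where you invoke embedded resolution; these achieve the same end here.
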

\begin{proof}
 Let \( X \) be a smooth variety over a field of characteristic zero, and let \( \bar{X} \) be some compactification. Any singularities of \( \bar{X} \) may be resolved without changing the smooth locus  \cite{Kollar}*{Theorem~3.36}, so we may assume that \( \bar{X} \) is smooth. Applying the Principalization Theorem \cite{Kollar}*{Theorem~3.26} to (the ideal sheaf of) the complement of \( X \) in \( \bar{X} \) yields the variety we are looking for.

\end{proof}

Note that in dimension two there is no distinction between compactness and projectivity: any smooth compact surface is projective \cite{Hartshorne:AmpleSub}*{II.4.2}. It follows that an arbitrary smooth surface is at least quasi-projective. Secondly, we will need the following lemma concerning generators of the Picard group.
\begin{lemma}\label{lem:PicMod2}
 Let \( X \) be a smooth quasi-projective variety over an algebraically closed field of characteristic zero. Then any element of \( \Pic(X)/2 \) can be represented by a smooth prime divisor (\ie by a smooth irreducible subvariety of codimension \( 1 \)). If \( X \) is projective, we may moreover take the divisor to be very ample (\ie to be given by a hyperplane section of \( X \) for some embedding of \( X \) into some \( \P^N \)).
\end{lemma}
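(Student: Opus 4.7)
The plan is to handle the projective case via Serre's theorem and Bertini, then reduce the quasi-projective case to the projective one by compactification using Lemma~\ref{lem:SmoothVarieties}.

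For projective $X$, I would fix any very ample line bundle $H$ on $X$. Given $\lb{L}\in\Pic(X)$, Serre's theorem guarantees that $\lb{L}\otimes H^{\otimes 2n}$ is very ample for all $n$ sufficiently large, and since the exponent is even, its class in $\Pic(X)/2$ coincides with that of $\lb{L}$. A general global section of this twist then cuts out a divisor which, by the classical smoothness Bertini theorem (valid in characteristic zero over an algebraically closed field) and---when $\dim X\geq 2$---the connectedness Bertini theorem for very ample linear systems on smooth projective varieties, is both smooth and irreducible.

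For quasi-projective $X$, I would apply Lemma~\ref{lem:SmoothVarieties} to embed $X$ as an open subset of a smooth projective variety $\bar X$ with $\bar X\setminus X$ a simple normal crossings divisor $\bigcup_i Y_i$. The standard right-exact localization sequence
\[ \bigoplus_i \Z\cdot[Y_i] \longrightarrow \Pic(\bar X) \longrightarrow \Pic(X)\rightarrow 0 \]
shows that any $\lb{L}\in\Pic(X)$ extends to some $\bar{\lb{L}}\in\Pic(\bar X)$. Invoking the projective case, I would produce a smooth very ample prime divisor $\bar D\subset \bar X$ with $[\OO(\bar D)]\equiv\bar{\lb{L}}\pmod 2$. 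By choosing $\bar D$ sufficiently general within its linear system, I may further arrange that $\bar D$ differs from each of the finitely many $Y_i$; then $D:=\bar D\cap X$ is a non-empty open subset of the smooth irreducible $\bar D$, hence itself smooth and irreducible, and its class represents $[\lb{L}]$ in $\Pic(X)/2$.

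The hard part will be the Bertini steps: smoothness of a general member rests essentially on the characteristic-zero hypothesis, and the connectedness assertion requires both smoothness of $\bar X$ and very ampleness of the linear system. The remaining ingredients---surjectivity of restriction of Picard groups (immediate from the localization sequence above), Serre vanishing, and the genericity of $\bar D$ relative to the finitely many boundary components $Y_i$---are essentially formal once the compactification supplied by Lemma~\ref{lem:SmoothVarieties} is in hand.
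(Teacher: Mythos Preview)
Your proposal is correct and follows essentially the same route as the paper: twist by a large even power of a very ample line bundle, apply Bertini, and reduce the quasi-projective case to the projective one via a smooth compactification and surjectivity of restriction on Picard groups. One small point the paper makes explicit that you leave implicit: your connectedness-Bertini step only applies when \(\dim X\geq 2\); when \(X\) is a projective curve, a general hyperplane section is a finite set of points rather than a prime divisor, but here \(\Pic(X)/2\cong\Z/2\) is generated by any single point, so the claim is trivial.
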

\begin{proof}
 We consider the case when \( X \) is projective first. If \( X \) is a projective curve, \( \Pic(X)/2\cong\Z/2 \) is generated by a point and there is nothing to show. So we may assume \( \dim(X)\geq 2 \).

 Fix a very ample line bundle \( \lb{L} \) on \( X \).
 Any element of \( \Pic(X)/2 \) can be lifted to a line bundle \( \lb{M} \) on \( X \).
 Tensoring with any sufficiently high power of \( \lb{L} \) will yield a very ample line bundle \( \lb{M}\otimes\lb{L}^m \) \cite{Lazarsfeld}*{1.2.10}.
 In particular, if we take \( m \) large and even we obtain a very
 ample line bundle that maps to the class of \( \lb{M} \) in \(
 \Pic(X)/2 \).
 The claim now follows from Bertini's theorem on hyperplane sections in characteristic zero \cite{Hartshorne}*{Corollary~10.9 and Remark~10.9.1}.

 In general, if \( X \) is quasi-projective, we may embed it as an open subset into a smooth resolution \( \bar{X} \) of its projective closure. Then \( \Pic(\bar{X}) \) surjects onto \( \Pic(X) \), and we obtain smooth prime divisors on \( X \) generating \( \Pic(X)/2 \) by restriction.
\end{proof}

\begin{example}
Consider \( \tilde \P^2 \), the blow-up of \( \P^2 \) at a point \( p \). Its Picard group is given by \( \Pic(\tilde \P^2)=\Z[H]\oplus\Z[E] \), where \( H \) is a hyperplane section of \( \P^2 \) that misses \( p \) and \( E \) is the exceptional divisor, both isomorphic to \( \P^1 \). A divisor \( a[H]-b[E] \) is ample if and only if \( a>b>0 \). Thus, \( \{[H], 2[H]-[E]\} \) is a basis of \( \Pic(\tilde \P^2) \) consisting of ample divisors. A smooth curve representing \( 2[H]-[E] \) is given by the birational transform of a smooth conic in \( \P^2 \) through \( p \).
\end{example}

\begin{proof}[Proof of Proposition~\ref{prop:comparison_Surfaces}]
Let \( X \) be a smooth surface. By Lemma~\ref{lem:SmoothVarieties}, we can find a projective surface \( \bar{X} \) and smooth curves \( D_1  \), \dots, \( D_k \) on \( \bar{X} \) whose union \( \bigcup D_i \) is the complement of \( X \) in \( \bar{X} \). On the other hand, by Lemma~\ref{lem:PicMod2}, we can find smooth ample curves generating \( \Pic(\bar{X})/2 \). Let \( C_1 \), \dots, \( C_\rho \) be a subset of these curves generating \( \Pic(X)/2 \), and put
\begin{equation*}
  U_i:=X-C_1-C_2-\cdots - C_i
\end{equation*}
For sufficiently large \( k \), the divisors \( \sum_j D_j + k(C_1+\cdots+C_i) \) are ample on \( \bar{X} \). Thus, each \( U_i \) is affine.
Moreover, we see from the exact sequences
\begin{equation*}
  \Z[C_{i+1}|_{U_i}]\rightarrow \Pic(U_i) \rightarrow \Pic(U_{i+1}) \rightarrow 0
\end{equation*}
and the choice of the \( C_i \) that \( \rank_{\Z/2}(\Pic(U_i)/2)=\rho-i \).
Thus, by Lemma~\ref{lem:S_h:aff_rank0}, Proposition~\ref{prop:comparison_Surfaces} holds for \( U_{\rho} \).

We can now proceed as in the proof of Theorem~\ref{thm:comparison_Curves}, by adding the curves back in to obtain \( X \). Namely, consider the successive open inclusions \( U_{i+1}\hookrightarrow U_i \). The closed complements of these are given by restrictions of the curves \( C_i \), so we obtain a diagram similar to \eqref{seq:localizationCp} with \( U_i \) playing the role of \( C \), \( U_{i+1} \) in the role of \( \tilde{C} \) and \( C_{i+1} \) in the role of a point. If the normal bundle of \( C_{i+1} \) in \( U_i \) is not trivial, the sequences will in fact involve twisted groups of \( C_i \), but in any case we can conclude using Lemma~\ref{cor:comparison-twist_Curves}.
\end{proof}

\begin{corollary}
Proposition~\ref{prop:comparison_Surfaces} also holds for groups with twists in a line bundle over \( X \).
\end{corollary}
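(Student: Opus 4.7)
The plan is to imitate the proof of Proposition~\ref{prop:comparison_Surfaces}, now carrying a twist by a line bundle \( \lb{L} \) throughout. The added difficulty is essentially cosmetic: all that is needed is to arrange the inductive construction so that \( \lb{L} \) becomes trivial on the affine core at the base of the induction, since along the way the twists on the closed curves we remove are handled by Corollary~\ref{cor:comparison-twist_Curves}.

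Concretely, write \( \lb{L}\cong\OO(D_+)\otimes\OO(D_-)^{-1} \) for smooth divisors \( D_\pm \) on \( X \), using Bertini as in Lemma~\ref{lem:PicMod2} (after twisting by a sufficiently ample line bundle if necessary). Embed \( X \) into a smooth projective \( \bar X \) as in the proof of Proposition~\ref{prop:comparison_Surfaces}, and enlarge the list \( C_1,\dots,C_\rho \) of smooth ample curves generating \( \Pic(X)/2 \) to include all irreducible components of \( D_+ \) and \( D_- \). With \( U_i := X - C_1 - \dots - C_i \) as before, each \( U_i \) is still affine, while \( \lb{L}|_{U_\rho}\cong\OO \) genuinely (not merely up to squares). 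The twisted and untwisted Witt, \GrothendieckWitt and \( \KO \)-groups of \( U_\rho \) therefore coincide, so the base case of the induction follows directly from Lemma~\ref{lem:S_h:aff_rank0}.

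The inductive step then proceeds exactly as in the proof of Proposition~\ref{prop:comparison_Surfaces}, using the \( \lb{L} \)-twisted localization sequence
\begin{equation*}
  \cdots \rightarrow \GW^{i-1}(C_{j+1},\lb{L}') \rightarrow \GW^i(U_j,\lb{L}|_{U_j}) \rightarrow \GW^i(U_{j+1},\lb{L}|_{U_{j+1}}) \rightarrow \W^i(C_{j+1},\lb{L}') \rightarrow \cdots
\end{equation*}
associated with the inclusion \( U_{j+1}\hookrightarrow U_j \), where the twist on the closed curve \( \lb{L}' = \det(\vb N)\otimes\lb{L}|_{C_{j+1}} \) involves the normal bundle of \( C_{j+1} \) in \( U_j \). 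The required comparison statement for the twisted groups over \( C_{j+1} \) is precisely Corollary~\ref{cor:comparison-twist_Curves}, and a five-lemma chase against the analogous topological ladder, exactly as in the proof of Theorem~\ref{thm:comparison_Curves}, transfers the desired surjectivity and injectivity properties from \( (U_{j+1},\lb{L}|_{U_{j+1}}) \) to \( (U_j,\lb{L}|_{U_j}) \). After \( \rho \) steps we arrive at \( (X,\lb{L}) \).

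The only genuine obstacle is to ensure that every ingredient --- the twisted localization sequences, their topological counterparts, and the commutative ladders between them --- is actually available in the twisted setting. These are all provided by the framework of \cite{Me:WCCV} and feed into the Standing Assumptions~1.9 already in force throughout Section~\ref{sec:CS:comparison:shifted} (\cf Remark~\ref{rem:disclaimer}).
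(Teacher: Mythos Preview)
Your argument is correct, but it takes a longer route than the paper's. The key observation you are missing is that you do not need to re-run the whole induction with the twist carried along: the untwisted Proposition~\ref{prop:comparison_Surfaces} is already available, and a \emph{single} localization step reduces the twisted case to it. Concretely, by Lemma~\ref{lem:PicMod2} the class of \( \lb{L} \) in \( \Pic(X)/2 \) is represented by one smooth prime divisor \( C\subset X \). Then \( \lb{L}|_{X-C} \) is trivial in \( \Pic(X-C)/2 \), so the twisted groups of \( X-C \) coincide with the untwisted ones, to which Proposition~\ref{prop:comparison_Surfaces} applies directly; the groups of the closed curve \( (C,\lb{L}') \) are handled by Corollary~\ref{cor:comparison-twist_Curves}. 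This is exactly the pattern of the proof of Corollary~\ref{cor:comparison-twist_Curves}, which is why the paper's proof consists of a single sentence pointing there.

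Your version is not wrong, but it reproves strictly more than necessary and introduces avoidable bookkeeping: you need \( D_+ \) and \( D_- \) to be (restrictions of) ample curves on \( \bar X \) for the affineness argument to go through, and your insistence that \( \lb{L}|_{U_\rho} \) be \emph{genuinely} trivial is superfluous since the groups only depend on the twist modulo squares. None of this is fatal, but the paper's one-step reduction is both shorter and cleaner.
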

\begin{proof}
As we have seen, generators of \( \Pic(X)/2 \) can be represented by smooth curves on \( X \). Thus, we can argue as in the proof of Corollary~\ref{cor:comparison-twist_Curves}.
\end{proof}

\begin{theorem}\label{thm:comparison_Surfaces}
Suppose \( X \) is a smooth complex surface for which the natural map \( \Pic(X)\rightarrow H^2(X;\Z) \) is surjective. Then the comparison maps
\begin{alignat*}{3}
 \gw^i&\colon  &\GW^i(X)& \rightarrow \KO^{2i}(X)
 &&\quad\text{are surjective, and the maps}\\
  \w^i&\colon  & \W^i(X)& \rightarrow \tKOK{2i}{(X)}
 &&\quad\text{are isomorphisms.}
 \end{alignat*}
\end{theorem}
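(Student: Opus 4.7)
The plan is to adapt the gluing strategy from the proof of Proposition~\ref{prop:comparison_Surfaces}, now feeding in the stronger inputs from Proposition~\ref{prop:comparison_0} and Corollary~\ref{cor:comparison_K} that become available under the Picard hypothesis, together with the comparison results for curves (Theorem~\ref{thm:comparison_Curves} and Corollary~\ref{cor:comparison-twist_Curves}). By $4$-periodicity of Witt and KO-groups it suffices to treat $i \in \{0,1,2,3\}$. The case $i=0$ is Proposition~\ref{prop:comparison_0}. For $i=3$, Theorem~\ref{thm:W_Surface} and Proposition~\ref{prop:KOK_Surfaces} give $\W^3(X)=0=\tKOK{6}{(X)}$, so $\w^3$ is vacuously an isomorphism; surjectivity of $\gw^3$ follows by chasing the Karoubi ladder \eqref{seq:Karoubi-comparison}, using that the hyperbolic maps $\K_0(X)\twoheadrightarrow\GW^3(X)$ and $\K^0(X)\twoheadrightarrow\KO^6(X)$ are both surjective (since their cokernels vanish) together with $\K_0(X)\twoheadrightarrow\K^0(X)$ from Corollary~\ref{cor:comparison_K}.

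For the remaining cases $i=1,2$, I would retrace the construction in the proof of Proposition~\ref{prop:comparison_Surfaces}: choose smooth ample curves $C_1,\dots,C_\rho$ on a smooth projective compactification of $X$ whose restrictions to $X$ generate $\Pic(X)/2$, and let $U_i:=X-C_1-\cdots-C_i$, so that each $U_i$ is affine and $\Pic(U_\rho)/2=0$. The strategy is to establish the theorem for $U_\rho$ first and then glue the curves back in one at a time, using the five-lemma on the ladder of localization sequences \eqref{eq:localization-comparison} at each stage.

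For the base case $U_\rho$: by Andreotti--Frankel, $U_\rho$ has the homotopy type of a $2$-dimensional CW complex, so singular cohomology in degrees $>2$ vanishes, and likewise for \'etale cohomology with $\Z/2$-coefficients by Artin's comparison. Combined with $\Pic(U_\rho)/2=0$, Theorem~\ref{thm:W_Surface} yields $\W^i(U_\rho)=0$ for $i\geq 1$, while Proposition~\ref{prop:KOK_Surfaces} reduces $\tKOK{2i}{(U_\rho)}$ to $H^2(U_\rho;\Z)/2$ when $i=1$ and to $0$ when $i\in\{2,3\}$. Provided the Picard hypothesis descends from $X$ to $U_\rho$, the combination $\Pic(U_\rho)\twoheadrightarrow H^2(U_\rho;\Z)$ and $\Pic(U_\rho)/2=0$ forces $H^2(U_\rho;\Z)/2=0$; for $i=0$ we invoke Proposition~\ref{prop:comparison_0} on $U_\rho$. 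Thus every $\w^i(U_\rho)$ is an isomorphism. In the inductive step, for each inclusion $U_{i+1}\hookrightarrow U_i$ with closed complement the smooth curve $C_{i+1}$ (possibly carrying a non-trivial twist by $\det$ of its normal bundle), the vertical maps associated with $C_{i+1}$ are isomorphisms by Theorem~\ref{thm:comparison_Curves} and Corollary~\ref{cor:comparison-twist_Curves}, while those associated with $U_{i+1}$ are isomorphisms by induction; the five-lemma applied to \eqref{eq:localization-comparison} then promotes the property to $U_i$. After $\rho$ such steps we have $\w^i(X)$ an isomorphism for $i=1,2$, and surjectivity of $\gw^i(X)$ follows by a Karoubi-ladder chase using $\w^i$ iso and $\K_0(X)\twoheadrightarrow\K^0(X)$.

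The main obstacle is the descent of the Picard hypothesis from $X$ to $U_\rho$, required to make the base case go through. This reduces to verifying that, at each removal step, the Gysin connecting map $H^2(U_i;\Z)\to H^1(C_{i+1};\Z)$ annihilates the image of $\Pic(U_i)\to H^2(U_i;\Z)$; one expects this from compatibility of the algebraic sequence $\Z[C_{i+1}]\to\Pic(U_i)\to\Pic(U_{i+1})\to 0$ with the corresponding Gysin sequence in integral singular cohomology, and a careful choice of the curves $C_j$ as smooth ample divisors on the compactification should ensure it. Granting this descent, the rest of the argument is a direct amplification of the proof of Proposition~\ref{prop:comparison_Surfaces}, with iso-statements replacing the surjectivity and injectivity statements throughout.
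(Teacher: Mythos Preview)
Your approach differs substantially from the paper's, and it has two genuine gaps.

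\textbf{The paper's argument.} Rather than re-running the localization induction aiming for isomorphisms, the paper exploits the explicit computations already in hand. Under the Picard hypothesis, the commutative square relating $S^1\colon\Pic(X)/2\to\CH^2(X)/2$ and $\Sq^2_{\Z}\colon H^2(X;\Z)/2\to H^4(X;\Z)/2$ shows these two operations are identified. Comparing Theorem~\ref{thm:W_Surface} with Proposition~\ref{prop:KOK_Surfaces} then gives that $\W^i(X)$ and $\tKOK{2i}{(X)}$ are abstractly isomorphic finite groups for each $i$. Now Proposition~\ref{prop:comparison_Surfaces} says each $\w^i$ is either injective or the induced map from a surjective $\gw^i$, hence surjective; either way, a map between finite groups of equal order that is injective or surjective is an isomorphism. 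Surjectivity of $\gw^i$ then follows from the Karoubi/Bott ladder together with Corollary~\ref{cor:comparison_K}. No descent of the Picard hypothesis to the open pieces $U_i$ is needed, and no five-lemma is invoked.

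\textbf{First gap: the five-lemma step.} In the ladder \eqref{eq:localization-comparison}, the targets are the full groups $\KO^*$, not the quotients $\tKOK{*}{}$. For the curve $C_{i+1}$, Theorem~\ref{thm:comparison_Curves} gives only that $\gw^j$ is \emph{surjective} and that $\w^j$ is an isomorphism onto the proper subgroup $\tKOK{2j}{(C_{i+1})}\subset\KO^{2j-1}(C_{i+1})$; neither is an isomorphism onto the full target. So the hypotheses of the five-lemma are not met, and you cannot conclude that $\w^i(U_i)$ is an isomorphism onto $\tKOK{2i}{(U_i)}$ from the same property for $U_{i+1}$. You could still extract injectivity of $\w^{i+1}(U_i)$ into $\KO^{2i+1}(U_i)$ from this ladder, but that is precisely what Proposition~\ref{prop:comparison_Surfaces} already does; the surjectivity onto $\tKOK{2i+2}{}$ does not follow from the ladder alone.

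\textbf{Second gap: Picard descent.} You correctly identify that your base case needs $\Pic(U_\rho)\twoheadrightarrow H^2(U_\rho;\Z)$, and you leave this unresolved. The obstruction is real: in the Gysin sequence $H^2(U_i;\Z)\to H^2(U_{i+1};\Z)\to H^1(C_{i+1};\Z)$ the last map need not vanish on the image of $\Pic$, so surjectivity of $\Pic\to H^2$ need not be inherited by the open pieces. Moreover, your inductive step also tacitly needs $\gw^i(U_{k+1})$ surjective (to feed the four-lemma), which via the Karoubi ladder again requires $\K_0(U_{k+1})\twoheadrightarrow\K^0(U_{k+1})$, i.e.\ the same descent at every intermediate stage.

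In short, the paper sidesteps both difficulties by proving abstract equality of the finite groups first and then upgrading the half-results of Proposition~\ref{prop:comparison_Surfaces} to isomorphisms by cardinality.
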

\noindent By Proposition~\ref{prop:comparison_0}, the assumption on the map \(\Pic(X)\rightarrow H^2(X;\Z)\) is clearly necessary.

\begin{proof}
Consider the squaring operations \( S^1 \) and \( \Sq^2_{\Z} \) appearing in the computations of the Witt and \( \tKOK{}{} \)-groups. For any smooth complex variety \( X \), we have a commutative diagram
\begin{equation*}
 \xymatrix@R=6pt{
  {\Pic(X)/2} \ar[r]^-{S^1}         \ar@{^{(}->}[d]       & {\CH^2(X)/2}  \ar[d]\\
  {H^2(X;\Z)/2}   \ar[r]            \ar@{^{(}->}[d]       & {H^4(X;\Z)/2}    \ar@{^{(}->}[d]\\
  {H^2(X;\Z/2)} \ar[r]^-{\Sq^2}                           & {H^4(X;\Z/2)}
 }
\end{equation*}
When \( X \) is a surface, the two vertical maps on the right are both
isomorphisms, and the horizontal map in the middle is essentially \(
\Sq^2_{\Z} \).  So whenever \( \Pic(X) \) surjects onto \( H^2(X;\Z)
\), we may identify \( S^1 \) and \( \Sq^2_{\Z} \). It follows by
comparison of Propositions~\ref{thm:W_Surface} and
\ref{prop:KOK_Surfaces} that the Witt groups of \( X \) agree with the groups \( \tKOK{2i}{(X)} \). On the other hand, we see from Proposition~\ref{prop:comparison_Surfaces} that each of the maps \( \w^i\colon{\W^i(X)\rightarrow\tKOK{2i}{(X)}} \) is either surjective or injective. Given that we are dealing with finite groups, these maps must be isomorphisms. Moreover, since we know from Corollary~\ref{cor:comparison_K} that we also have a surjection from the algebraic to the topological K-group of \( X \), we may deduce via the Karoubi/Bott sequences that the maps \( \gw^i\colon{\GW^i(X)\rightarrow\KO^{2i}(X)} \) are surjective for all values of \( i \).
\end{proof}

\begin{example}\label{eg:non-cellular-Surfaces} The
  result discussed here is completely independent of the comparison result in
  \cite{Me:WCCV}. In particular, there are lots of projective surfaces
  whose geometric genus $\rho_g$ is zero but which are not
  cellular. We briefly list a few concrete examples. In each case, non-cellularity may be deduced from the fact that the fundamental group does not vanish. All data is freely quoted from \cite{Beauville}.
\begin{itemize}
\item Any surface ruled over a curve $C$ has geometric genus zero, but since $b_1(X)=g(C)$ it cannot be ruled unless it is rational.
\item Enriques surfaces have $\rho_g=0$ but fundamental group $\Z/2$. Similarly, $\rho_g=0$ for the Godeaux surface, but its fundamental group is $\Z/5$.
\item Bielliptic surfaces have $\rho_g=0$ but first Betti number $b_1=1$.
\end{itemize}
\end{example}

\subsection{\texorpdfstring{$\Z/2$}{\textbf{Z}/2}-coefficients}
\label{sec:comparison:homotopic:2}
In this final section, we examine how the integral comparison
result discussed in this article is related to comparison results for
the corresponding theories with \( \Z/2 \)-coefficients. In brief, it
turns out that ``the integral comparison isomorphism for Witt groups'' is closely
related to ``comparison isomorphisms for all higher
hermitian K-groups with \( \Z/2 \)-coefficients''. This is made precise in Proposition~\ref{prop:integral-vs-2-comparison} and Corollary~\ref{cor:my-comparison-with-2}.

Recall from \cite{Me:WCCV}*{Section~2} that the comparison maps considered so
far may be viewed as low-degree versions of comparison maps from
higher (hermitian) K-groups to (real) topological K-groups:
\begin{alignat*}{2}
       k_i &\colon &\K_i(X)      &\rightarrow\K^{-i}(X) \\
       k_h^{p,q} &\colon &\Kh^{p,q}(X) &\rightarrow\KO^p(X)
\end{alignat*}
That is, we may identify the map \( k \) with \(k_0 \), \( \gw^i \)
with \( k_h^{2i,i} \) and \( \w^i \) with \( k_h^{2i-1,i-1} \).

The definition of these higher maps may be generalized to take
care of coefficients. In particular, we may consider algebraic and
hermitian K-groups with \(\Z/2\)-coefficients and obtain comparison maps
\begin{alignat*}{2}
       k_i&\colon &\K_i(X;\Z/2)      &\rightarrow\K^{-i}(X;\Z/2) \\
  k_h^{p,q}&\colon &\Kh^{p,q}(X;\Z/2) &\rightarrow\KO^p(X;\Z/2)
\end{alignat*}
These share all the formal properties of their integral counterparts
as listed in \cite{Me:WCCV}*{Section~2}. For example, the existence of Karoubi and Bott sequences with \(\Z/2\)-coefficients may be deduced from the \(3\times 3\)-Lemma in triangulated categories \cite{May:Additivity}*{Lemma~2.6}, and the comparison maps are again compatible with these.

The behaviour of the comparison maps for \( \K \)-theory with
\(\Z/2\)-coefficients was predicted by the Quillen-Lichtenbaum
conjectures: they are isomorphisms in all degrees \( i\geq{\dim(X)-1}
\) and injective in degree \( \dim(X)-2 \). Proofs may be found in
\cite{Levine:BlochLichtenbaum}*{Corollary~13.5 and Remark~13.2} and
\cite{Voevodsky:MC-with-2}*{Theorem~7.10}. In particular, on a complex
point or curve we have isomorphisms in all non-negative degrees, while
for a smooth complex surface \( X \) we have isomorphisms in positive degrees and an inclusion in degree zero:
\begin{align*}
 \phantom{\text{for all \( i>0 \)}}\quad
 \K_i(X;\Z/2)&\xrightarrow{\;\cong\;} \K^{-i}(X;\Z/2)
 \quad \text{for all \( i>0 \)}\\
 \K_0(X;\Z/2)&\xhookrightarrow{\;\phantom{\cong}\;} \K^0(X;\Z/2)
\end{align*}
For these low-dimensional cases, proofs may also be found in \citelist{\cite{Suslin:K-and-MC}*{4.7}\cite{PedriniWeibel:Surfaces}*{Theorem~2.2}}.

The Quillen-Lichtenbaum conjecture for a complex point implies an analogous statement for the hermitian comparison maps. Namely, it is not difficult to see that the comparison maps on Witt groups with \(\Z/2\)-coefficients
\begin{equation*}
  \w^i\colon \W^i(X;\Z/2)\rightarrow \tKOK{2i}{(X;\Z/2)}
\end{equation*}
are also isomorphisms when \(X\) is a point. (For example, this can be deduced from Proposition~\ref{prop:integral-vs-2-comparison} below.) Thus, the following statement may be obtained via Karoubi-induction.
\begin{corollary}
The comparison maps
\begin{alignat*}{2}
 k_h^{p,q}\colon \Kh^{p,q}(\point;\Z/2)  \rightarrow\KO^p(\point;\Z/2)
\end{alignat*}
are isomorphisms in all non-negative degrees, \ie for all \( (p,q) \) with \( 2q-p\geq 0 \).
\end{corollary}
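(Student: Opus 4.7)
The argument is \emph{Karoubi-induction} on the algebraic degree $n := 2q - p$, as flagged in the preamble. Two inputs feed the induction:
\begin{enumerate}[label=(\roman*)]
\item the Quillen--Lichtenbaum isomorphism $k_i\colon \K_i(\mathrm{pt};\Z/2)\xrightarrow{\cong}\K^{-i}(\mathrm{pt};\Z/2)$ for all $i\ge 0$, just recalled;
\item the Witt-group comparison $\w^i\colon\W^i(\mathrm{pt};\Z/2)\xrightarrow{\cong}\tKOK{2i}{(\mathrm{pt};\Z/2)}$, corresponding to the base case $n=-1$. This is the parenthetical remark preceding the statement, itself an elementary consequence of Proposition~\ref{prop:integral-vs-2-comparison} (or equivalently, of the explicit agreement of the integral values on a point combined with a Bockstein chase).
\end{enumerate}

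Both the Karoubi long exact sequence and the Bott long exact sequence with $\Z/2$-coefficients are available (by the $3\times 3$-lemma in triangulated categories cited above) and are compatible under the comparison maps $k_*$ and $k_h^{*,*}$. For each pair $(p,q)$ with $n\ge 0$, the plan is to overlay a well-chosen five-term segment of the Karoubi sequence
\begin{equation*}
 \Kh^{p-2,q-1}(\mathrm{pt};\Z/2)\to\K_n(\mathrm{pt};\Z/2)\to\Kh^{p,q}(\mathrm{pt};\Z/2)\to\Kh^{p-1,q-1}(\mathrm{pt};\Z/2)\to\K_{n-1}(\mathrm{pt};\Z/2)
\end{equation*}
onto the analogous segment of the Bott sequence and apply the five-lemma. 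Three of the four rungs flanking $\Kh^{p,q}$ are then iso by the inputs: both $\K$-rungs by (i) (with $\K_{-1}(\mathrm{pt};\Z/2)=0=\K^1(\mathrm{pt};\Z/2)$ handling the case $n=0$), and the hermitian rung $\Kh^{p-1,q-1}$, of algebraic degree $n-1$, by the inductive hypothesis, with (ii) serving as the base case $n-1=-1$.

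\emph{Main obstacle.} The remaining rung $\Kh^{p-2,q-1}$ has the same algebraic degree $n$ as the target, so it is not directly supplied by the inductive hypothesis. This is the one genuine subtlety. The way out is to iterate the same ladder argument with $(p,q)$ replaced by $(p-2,q-1)$, invoking the $4$-periodicity of hermitian K-theory in the shift index $q$ together with the $8$-periodicity of $\KO$ on the target, so that the chain of Karoubi ladders cycles back on itself after finitely many steps and the coupled system of unknown iso's can be solved simultaneously. Equivalently, one can promote to mod $2$ the integral $\gw^i$-comparison on a point (known from the explicit values of $\GW^i(\mathrm{pt})$ and $\KO^{2i}(\mathrm{pt})$) via a Bockstein argument and thereby treat $n=0$ as an additional explicit base, which breaks the recursion at once. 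Once this rung is handled, the five-lemma delivers iso on $k_h^{p,q}$ and the induction closes.
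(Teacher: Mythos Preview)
Your main argument is correct and matches the paper's (one-line) proof via Karoubi induction; you have in fact supplied considerably more detail than the paper does. The inputs (i) and (ii) are exactly the ones the paper invokes, and you correctly isolate the only real subtlety: the leftmost hermitian term $\Kh^{p-2,q-1}$ in the five-term window has the same algebraic degree $n$ as the target, so a naive induction on $n$ stalls. Your resolution via the $4$-periodicity in $q$ is the right one. To make ``solve the coupled system simultaneously'' precise, note that the argument is two-pass rather than genuinely circular: the \emph{epi} half of the five-lemma for $k_h^{p,q}$ uses only the four rightmost rungs, all of which are isomorphisms by (i) and the inductive hypothesis in degree $n-1$; hence every comparison map in degree $n$ is surjective. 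Then the \emph{mono} half only needs the leftmost rung to be surjective, which you have just established. This closes the induction cleanly and is worth spelling out.

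One correction: your ``equivalently'' alternative at the end does not do what you claim. Promoting the integral $\gw^i$-isomorphisms on a point to $\Z/2$-coefficients via Bockstein does supply the case $n=0$ directly, but this does \emph{not} ``break the recursion at once'' for $n\geq 1$. In the five-term window centred at any $\Kh^{p,q}$ of degree $n\geq 1$, the leftmost hermitian term still has degree $n$, not $n-1$ or $0$, so you face exactly the same circular dependence and still need the periodicity argument above. Either drop this alternative or reframe it as merely an independent verification of the $n=0$ layer.
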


When \( X \) is a surface, we have seen that the integral comparison
maps on Witt groups are isomorphisms whenever \( \Pic(X) \) surjects
onto \( H^2(X;\Z) \). By passing to \( \Z/2 \)-coefficients, we may
obtain isomorphisms on the level of \GrothendieckWitt groups under
just one additional topological constraint on \( X \).
\begin{proposition}\label{prop:integral-vs-2-comparison}
 Let \( X \) be a smooth complex variety, of any dimension. If the odd topological K-groups of \( X \) contain no 2-torsion (\ie if \(\K^1(X)[2]=0\)), then the integral comparison maps
\begin{align*}
 \W^i(X)&\rightarrow\tKOK{2i}{(X)}
\intertext{%
are isomorphisms for all \( i \) if and only if the comparison maps with \(\Z/2\)-coefficients
}
 \W^i(X;\Z/2)&\rightarrow\tKOK{2i}{(X;\Z/2)}
\end{align*}
are isomorphisms for all \( i \).
\end{proposition}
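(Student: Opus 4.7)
The strategy is to reduce the \emph{iff} to a five-lemma argument on a ladder of Bockstein short exact sequences. The plan rests on two $2$-torsion observations which, together with the hypothesis $\K^1(X)[2]=0$, cause the relevant Bocksteins to collapse.

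\emph{Step 1: both sides are $2$-torsion.} On the Witt side this is Lemma~\ref{lem:HF}. On the topological side, exactness of the Bott sequence identifies $\tKOK{2i}{X}$ with the image of the map $\KO^{2i}(X)\to\KO^{2i-1}(X)$, and this map is multiplication by the Hopf element $\eta\in\KO^{-1}(\point)\cong\Z/2$; since $2\eta=0$, any element in its image is annihilated by $2$. Consequently, the general Bockstein sequence for Witt theory collapses to
\[ 0\to \W^i(X) \to \W^i(X;\Z/2) \to \W^{i+1}(X)\to 0.\]

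\emph{Step 2: the corresponding sequence on the topological side.} I would apply the snake lemma to the map of Bockstein short exact sequences
\[
\begin{array}{ccccccccc}
0 & \to & \K^0(X)/2 & \to & \K^0(X;\Z/2) & \to & \K^1(X)[2] & \to & 0 \\
  &     & \downarrow c/2 & & \downarrow c_2 & & \downarrow & & \\
0 & \to & \KO^{2i}(X)/2 & \to & \KO^{2i}(X;\Z/2) & \to & \KO^{2i+1}(X)[2] & \to & 0
\end{array}
\]
induced by the comparison map $c\colon \K^0(X)\to \KO^{2i}(X)$. Because $\K^1(X)[2]=0$, the top right term vanishes, and the snake lemma outputs a short exact sequence of cokernels
\[ 0\to \tKOK{2i}{X}/2 \to \tKOK{2i}{(X;\Z/2)} \to \KO^{2i+1}(X)[2]\to 0.\]
By Step~1 the left term reduces to $\tKOK{2i}{X}$. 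For the right term, the hypothesis again implies that any $2$-torsion in $\KO^{2i+1}(X)$ maps to zero in $\K^1(X)$ and hence lies in $\ker(\KO^{2i+1}(X)\to \K^1(X))\cong \tKOK{2(i+1)}{X}$; this inclusion is in fact an equality since $\tKOK{2(i+1)}{X}$ is itself $2$-torsion by Step~1.

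\emph{Step 3: five-lemma and obstacle.} Having the two short exact sequences in hand, I would assemble the commutative ladder with vertical maps $\w^i$, $\w^i_2$, $\w^{i+1}$. Commutativity is an instance of the naturality of the Bockstein with respect to the comparison maps, which is part of the formal framework reviewed at the start of this section; verifying this naturality --- in particular, that the snake-lemma output really coincides with the Bockstein on the quotient theory to which $\w^i_2$ is natural --- is the main obstacle of the proof. Once this is in place, the proposition follows from a two-sided five-lemma: in one direction, $\w^i$ and $\w^{i+1}$ isomorphisms force $\w^i_2$ to be an isomorphism; in the other, an isomorphism $\w^i_2$ (for every $i$) forces $\w^i$ injective and $\w^{i+1}$ surjective, and letting $i$ vary shows that every $\w^i$ is an isomorphism.
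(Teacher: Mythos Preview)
Your proposal is correct and follows essentially the same route as the paper: both arguments produce the ladder of short exact sequences by applying the snake lemma to the Bockstein sequences for (hermitian) K-theory versus ordinary K-theory, use the hypothesis \(\K^1(X)[2]=0\) to make the top right entry vanish, identify \(\KO^{2i+1}(X)[2]\) with \(\tKOK{2i+2}{(X)}\) via the \(\eta\)-map (your kernel argument is exactly the content of the paper's preliminary lemma), and conclude by a five-lemma. The only cosmetic difference is that the paper phrases the algebraic row in terms of \(\GW^i/\K_0\) before identifying it with \(\W^i\), whereas you invoke the Bockstein for Witt theory directly.
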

Before giving the proof, we make one preliminary observation.
\begin{lemma}
Let \( X \) be a topological space such that \(\K^1(X)[2]=0\). Then multiplication by \( \eta\in\KO^{-1}(\point) \) induces an isomorphism
       \[
       \tKOK{2i}{(X)}\xrightarrow[\cong]{\;\eta\;}\KO^{2i-1}(X)[2]
       \]
\end{lemma}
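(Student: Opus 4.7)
The plan is to read the isomorphism off directly from the Bott long exact sequence
\[
\cdots \to \K^0(X) \to \KO^{2i}(X) \xrightarrow{\eta} \KO^{2i-1}(X) \xrightarrow{c} \K^1(X) \to \cdots
\]
appearing along the bottom row of the comparison diagram~\eqref{seq:Karoubi-comparison}. The map $\KO^{2i}(X)\to\KO^{2i-1}(X)$ is multiplication by $\eta$ (this is the standard description of the connecting map in the Wood cofibre sequence $\Sigma\KO\xrightarrow{\eta}\KO\to\K$) and $c$ is complexification. Since $\eta\in\KO^{-1}(\point)$ has cohomological degree $-1$, this map indeed lands in the claimed target. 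Exactness at $\KO^{2i}(X)$ shows that $\eta$ annihilates the image of $\K^0(X)$, so it descends to a homomorphism $\bar\eta\colon\tKOK{2i}{(X)}\to\KO^{2i-1}(X)$, which is injective by the same exactness.

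It then remains to identify the image of $\bar\eta$ with the 2-torsion subgroup $\KO^{2i-1}(X)[2]$. The inclusion $\im(\bar\eta)\subseteq\KO^{2i-1}(X)[2]$ is purely formal: the Hopf element satisfies $2\eta=0$ in $\pi_1^s=\KO^{-1}(\point)$, so $2(\eta\cdot y)=(2\eta)\cdot y=0$ for every $y\in\KO^{2i}(X)$. For the reverse inclusion I would invoke the hypothesis $\K^1(X)[2]=0$: given $x\in\KO^{2i-1}(X)[2]$, its complexification $c(x)$ is 2-torsion in $\K^1(X)$ and hence zero, so by exactness at $\KO^{2i-1}(X)$ we have $x\in\ker(c)=\im(\eta)=\im(\bar\eta)$.

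I do not expect any serious obstacle here: the whole argument is a short two-step diagram chase driven by the shape of the Bott sequence, the fact that $\eta$ is itself 2-torsion, and the compatibility of $c$ with the abelian group structure. The only ingredient worth flagging is the identification of the indicated arrow in the Bott sequence with multiplication by $\eta$, but this is a classical fact about the Wood cofibre sequence that can simply be quoted.
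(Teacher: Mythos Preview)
Your argument is correct and essentially identical to the paper's: both extract from the Bott sequence, using $2\eta=0$, the exact sequence
\[
0\rightarrow\tKOK{2i}{(X)}\xrightarrow{\;\eta\;}\KO^{2i-1}(X)[2]\rightarrow\K^1(X)[2],
\]
from which the claim follows immediately under the hypothesis. The paper simply states this sequence without spelling out the diagram chase you give.
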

\begin{proof}
In general, since \( 2\eta=0 \), the following exact sequence may be extracted from the Bott sequence:
\begin{equation*}
 0\rightarrow\tKOK{2i}{(X)}\xrightarrow{\;\eta\;} \KO^{2i-1}(X)[2]\rightarrow\K^1(X)[2]
\end{equation*}
This proves the claim.
\end{proof}
\begin{proof}[Proof of Proposition~\ref{prop:integral-vs-2-comparison}]
We claim that we have the following row-exact commutative diagram:
\begin{equation}\label{diag:prop:integral-vs-2-comparison}
\tag{\textasteriskcentered}
\vcenter{\xymatrix{
   {0}                                 \ar[r]
 & {\frac{\GW^i(X)}{\K_0(X)}}          \ar[r]\ar[d]
 & {\frac{\GW^i(X;\Z/2)}{\K_0(X;\Z/2)}}\ar[r]\ar[d]
 & {\W^{i+1}(X)}                       \ar[r]\ar[d]
 & {0} \\
   {0}                                     \ar[r]
 & {\frac{\KO^{2i}(X)}{\K^0(X)}}           \ar[r]
 & {\frac{\KO^{2i}(X;\Z/2)}{\K^0(X;\Z/2)}} \ar[r]
 & {\KO^{2i+1}(X)[2]}                      \ar[r]
 & {0}
}}
\end{equation}
Indeed, the lower exact row may be obtained by applying the Snake Lemma to the following diagram of short exact sequences induced by the Bockstein sequences for \( \K \)- and \( \KO \)-theory:
\begin{equation*}
\xymatrix{
   {0}           \ar[r]
 & {\K^0(X)/2}   \ar[r]^-{\cong}\ar[d]
 & {\K^0(X;\Z/2)}\ar[r]\ar[d]
 & {0}                 \ar[d] \\
   {0}\ar[r]
 & {\KO^{2i}(X)/2}   \ar[r]
 & {\KO^{2i}(X;\Z/2)}\ar[r]
 & {\KO^{2i+1}(X)[2]}  \ar[r]
 & {0}
}
\end{equation*}
The upper row of \eqref{diag:prop:integral-vs-2-comparison} may be obtained similarly from the Bockstein sequences for algebraic and hermitian K-theory. The vertical maps are induced by the comparison maps in degrees \( 0 \), \( 0 \) and \( -1 \), respectively. Using the canonical identification of \( \GW^i(X)/\K_0(X) \) with \( \W^i(X) \), we may however identify the first vertical map with the usual comparison map \( \w^i \) in degree \( -1 \). Likewise, the second vertical map may be identified with the comparison map \( \w^i \) for Witt groups with \( \Z/2 \)-coefficients. Lastly, by the previous lemma, the entry in the lower right corner may be identified with \( \tKOK{2i+2}{(X)} \). Thus, diagram \eqref{diag:prop:integral-vs-2-comparison} can be rewritten in a form from which both implications of the proposition may be deduced:
\begin{equation*}
\raisebox{\totalheight}{\xymatrix{
   {0}                \ar[r]
 & {\W^i(X)}          \ar[r]\ar[d]^-{\w^i}
 & {\W^i(X;\Z/2)}     \ar[r]\ar[d]^-{\w^i}
 & {\W^{i+1}(X)}      \ar[r]\ar[d]^-{\w^{i+1}}
 & {0} \\
   {0}                       \ar[r]
 & {\KOK{2i}{(X)}}           \ar[r]
 & {\KOK{2i}{(X;\Z/2)}}      \ar[r]
 & {\KOK{2i+2}{(X)}}         \ar[r]
 & {0}
}}%
\qedhere
\end{equation*}
\end{proof}

\begin{lemma}
Let \( X \) be a smooth complex variety of dimension at most two. The map \( \K_0(X;\Z/2)\hookrightarrow \K^0(X;\Z/2) \) is an isomorphism if and only if the map \( \Pic(X)\rightarrow H^2(X;\Z) \) is surjective and \( \K^1(X)[2]=0 \).
\end{lemma}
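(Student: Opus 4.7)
I would approach this via the mod-$2$ Bockstein sequences for algebraic and topological K-theory. Since $\K_{-1}(X)=0$ for any regular noetherian scheme, the algebraic Bockstein collapses to an isomorphism $\K_0(X)/2 \xrightarrow{\cong} \K_0(X;\Z/2)$, while the topological one is the short exact sequence
\[
0\to \K^0(X)/2 \to \K^0(X;\Z/2) \to \K^1(X)[2]\to 0.
\]
The comparison maps fit into a commutative ladder of these two sequences. The statement of the lemma already records that the middle vertical arrow is injective (this is the Quillen--Lichtenbaum inclusion $\K_0(X;\Z/2)\hookrightarrow \K^0(X;\Z/2)$ for smooth complex varieties of dimension at most two, recalled just before the lemma), so ``isomorphism'' is synonymous with ``surjective''. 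A snake-lemma argument on the ladder then yields a four-term exact sequence
\[
0\to\coker\!\bigl(\K_0(X)/2\to\K^0(X)/2\bigr)\to\coker\!\bigl(\K_0(X;\Z/2)\to\K^0(X;\Z/2)\bigr)\to \K^1(X)[2]\to 0,
\]
which identifies surjectivity of the comparison map with the conjunction of the two conditions $\K^1(X)[2]=0$ and surjectivity of $\K_0(X)/2\to\K^0(X)/2$.

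It remains to translate the second condition into surjectivity of $\Pic(X)\to H^2(X;\Z)$. The filtration argument underlying Corollary~\ref{cor:comparison_K}, combined with the surjectivity of $c_2$ from Lemma~\ref{lem:ker_c1}, shows that the cokernel of $\K_0(X)\to\K^0(X)$ is naturally isomorphic to $H^2(X;\Z)/\Pic(X)$. Surjectivity of $\K_0(X)/2\to\K^0(X)/2$ is then equivalent to $H^2(X;\Z)/\Pic(X)$ being $2$-divisible, and the remaining task is to see that this quotient is $2$-divisible precisely when it vanishes. Here I would invoke the structural description~\eqref{eq:Pic-surface} of the Picard group: since $\Pic(X)$ contains $H^2(X;\Z)_{\mathrm{tors}}$ as a direct summand, the quotient $H^2(X;\Z)/\Pic(X)$ is torsion-free of rank $b_2-\rho$, and a finitely generated torsion-free abelian group is $2$-divisible only if it is zero.

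The main obstacle, as is so often the case in questions about $2$-torsion, is this last step: $2$-divisibility of the cokernel is \emph{a priori} weaker than vanishing, so without~\eqref{eq:Pic-surface} one could be thwarted by odd-primary torsion in $H^2(X;\Z)/\Pic(X)$. For curves the step is painless (the cokernel is either $\Z$ or zero), but for surfaces one has to use genuinely that every torsion class in $H^2(X;\Z)$ is algebraic --- ultimately a consequence of the Lefschetz $(1,1)$ theorem in the projective case, and encoded in the decomposition~\eqref{eq:Pic-surface} in general.
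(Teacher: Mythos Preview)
Your proof is correct and follows essentially the same route as the paper: both set up the Bockstein ladder for algebraic versus topological K-theory, reduce the question to surjectivity of \(\K_0(X)/2\to\K^0(X)/2\), and then invoke the structural description~\eqref{eq:Pic-surface} to pass between this and surjectivity of \(\Pic(X)\to H^2(X;\Z)\). The only cosmetic difference is that you compute the integral cokernel \(H^2(X;\Z)/\Pic(X)\) and argue via \(2\)-divisibility of a finitely generated torsion-free group, whereas the paper stays at the level of mod-\(2\) reductions throughout; the substance is the same.
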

\begin{proof}
We see from the description of the Picard group \eqref{eq:Pic-surface} that \( \Pic(X)\rightarrow H^2(X;\Z) \) is surjective if and only if it is surjective after tensoring with \(\Z/2\). Moreover, by a similar argument as in Corollary~\ref{cor:comparison_K}, the surjectivity of the map \( \Pic(X)/2\rightarrow H^2(X;\Z)/2 \) is equivalent to the surjectivity of the map \( \K_0(X)/2\rightarrow \K^0(X)/2 \).  The claim follows from these equivalences and a commutative diagram induced by the Bockstein sequences:
\begin{equation*}
\raisebox{\totalheight}{\xymatrix{
   {0}            \ar[r]
 & {\K_0(X)/2}    \ar[r]^{\cong}\ar[d]
 & {\K_0(X;\Z/2)} \ar[r]\ar[d]
 & {0}            \ar[d]
 \\
   {0}            \ar[r]
 & {\K^0(X)/2}    \ar[r]
 & {\K^0(X;\Z/2)}  \ar[r]
 & {\K^1(X)[2]}   \ar[r]
 & {0}
}}%
\qedhere
\end{equation*}
\end{proof}
If we combine the known results for K-theory with our results for Witt groups and Proposition~\ref{prop:integral-vs-2-comparison}, we obtain the following corollary via Karoubi induction.
\begin{corollary}\label{cor:my-comparison-with-2}
Let \( X \) be a smooth complex variety of dimension at most two. Assume that the natural map \( \Pic(X)\rightarrow H^2(X;\Z) \) is surjective and that \( \K^1(X) \) has no 2-torsion. Then the hermitian comparison maps
\begin{flalign*}
   \qquad && \Kh^{p,q}(X;\Z/2)&\longrightarrow\KO^p(X;\Z/2) &&
\intertext{%
are isomorphisms in all non-negative degrees, \ie for all \( (p,q) \) such that \(2q-p\geq 0\). In particular, for all shifts \( i \) we have isomorphisms
}
          && \GW^i(X;\Z/2)&\overset{\cong}\longrightarrow\KO^{2i}(X;\Z/2) &&
\end{flalign*}
\end{corollary}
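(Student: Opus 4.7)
The plan is to assemble the result from three ingredients: (a)~the Quillen-Lichtenbaum theorem for algebraic K-theory with $\Z/2$-coefficients, upgraded under our hypotheses to an isomorphism in every non-negative degree; (b)~the integral comparison isomorphism on Witt groups (Theorems~\ref{thm:comparison_Curves} and~\ref{thm:comparison_Surfaces}), transferred to $\Z/2$-coefficients via Proposition~\ref{prop:integral-vs-2-comparison}; and (c)~a Karoubi-style induction along the Karoubi and Bott sequences with $\Z/2$-coefficients.

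First I would promote the algebraic comparison. Quillen-Lichtenbaum already gives isomorphisms $k_i\colon \K_i(X;\Z/2) \xrightarrow{\cong} \K^{-i}(X;\Z/2)$ for every $i>0$. The preceding lemma then covers the missing degree $i=0$: it converts the two hypotheses $\Pic(X) \twoheadrightarrow H^2(X;\Z)$ and $\K^1(X)[2]=0$ into precisely the isomorphism $\K_0(X;\Z/2) \xrightarrow{\cong} \K^0(X;\Z/2)$. Hence $k_i$ is an isomorphism for every $i\geq 0$.

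Next I would establish the hermitian comparison at the ``Witt level''. The surjectivity of $\Pic(X)\to H^2(X;\Z)$ is exactly what Theorem~\ref{thm:comparison_Curves} (respectively Theorem~\ref{thm:comparison_Surfaces}) requires, so its integral conclusion $w^i\colon \W^i(X)\xrightarrow{\cong}\tKOK{2i}{(X)}$ is available. The second hypothesis $\K^1(X)[2]=0$ then lets Proposition~\ref{prop:integral-vs-2-comparison} transport this across to $\Z/2$-coefficients, so that each $w^i\colon \W^i(X;\Z/2)\xrightarrow{\cong}\tKOK{2i}{(X;\Z/2)}$ is again an isomorphism.

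Finally I would run the Karoubi induction. With $\Z/2$-coefficients the Karoubi sequences still interlock the hermitian K-groups $\Kh^{p,q}(X;\Z/2)$ of consecutive shifts $q$ with the algebraic K-groups $\K_*(X;\Z/2)$, and they map compatibly into the corresponding Bott sequences on the topological side; the existence and compatibility of these sequences is precisely what the opening of Section~\ref{sec:comparison:homotopic:2} records. Steps~1 and~2 then pin down the comparison map on every $\K$-term and on the row $2q-p=-1$ inhabited by the $w^i$; an iterated five-lemma argument along the ladder propagates the isomorphism one shift at a time to every $(p,q)$ with $2q-p\geq 0$. Specialising to $(p,q)=(2i,i)$ recovers the ``in particular'' statement for $\GW^i$. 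The main obstacle in this final step is purely bookkeeping: one must verify at each inductive stage that all four neighbouring vertical arrows in the ladder are already known to be isomorphisms, which in turn is why we needed the algebraic comparison in every non-negative degree, not merely in positive ones.
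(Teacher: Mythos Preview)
Your proposal is correct and follows essentially the same strategy as the paper's own (one-line) proof: combine the Quillen--Lichtenbaum isomorphisms for K-theory with $\Z/2$-coefficients (extended to degree~$0$ via the preceding lemma), the integral Witt comparison from Theorems~\ref{thm:comparison_Curves}/\ref{thm:comparison_Surfaces} transferred to $\Z/2$-coefficients by Proposition~\ref{prop:integral-vs-2-comparison}, and then propagate via Karoubi induction along the compatible Karoubi/Bott ladders. You have simply unpacked in detail what the paper compresses into a single sentence.
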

For example, the conditions of the corollary are satisfied for any smooth complex curve, and for any simply-connected projective surface of geometric genus zero. The condition that \(\K^1(X)[2]=0 \) can be rephrased in terms of the integral cohomology group \( H^3(X;\Z) \):
\begin{lemma}
Let \( X \) be a smooth complex variety of dimension at most two. Then the torsion of \( \K^1(X) \) agrees with the torsion of \(  H^3(X;\Z) \).
\end{lemma}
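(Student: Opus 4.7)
The plan is to compute \( \K^1(X) \) via the Atiyah-Hirzebruch spectral sequence \( E_2^{p,q}=H^p(X;\K^q(\point))\Rightarrow \K^{p+q}(X) \) reviewed in Section~\ref{sec:CS:Surfaces:KOK}, and then to read off its torsion subgroup. Since \( X(\C) \) has real dimension at most four, the only bidegrees \( (p,q) \) with \( p+q=1 \) and \( q \) even are \( (1,0) \) and \( (3,-2) \), yielding exactly two potentially surviving contributions, \( H^1(X;\Z) \) and \( H^3(X;\Z) \).

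Next, I would verify that the relevant \( d_3 \)-differentials all vanish. Recall that these are given by \( \beta\circ\Sq^2\circ\pi \). The differential out of \( E_3^{1,0}=H^1(X;\Z) \) lands in \( H^4(X;\Z) \) but factors through \( \Sq^2\colon H^1(X;\Z/2)\rightarrow H^3(X;\Z/2) \), which vanishes on classes of degree strictly less than two. For the same reason, the differential from \( E_3^{0,0}=H^0(X;\Z) \) into \( E_3^{3,-2}=H^3(X;\Z) \) is zero. No higher differential \( d_r \) with \( r\geq 5 \) can reach either entry, since its source or target would lie outside the strip \( 0\leq p\leq 4 \). Consequently \( E_\infty^{1,0}=H^1(X;\Z) \) and \( E_\infty^{3,-2}=H^3(X;\Z) \), and the filtration on \( \K^1(X) \) produces a short exact sequence
\[
0\rightarrow H^3(X;\Z)\rightarrow \K^1(X)\rightarrow H^1(X;\Z)\rightarrow 0.
\]

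To finish, the key observation is that \( H^1(X;\Z) \) is torsion-free: by the universal coefficient theorem it is isomorphic to \( \Hom(H_1(X;\Z),\Z) \), which has no torsion. In any extension \( 0\rightarrow A\rightarrow B\rightarrow C\rightarrow 0 \) with \( C \) torsion-free, the torsion subgroup of \( B \) maps isomorphically onto the torsion subgroup of \( A \); applied here, this identifies the torsion of \( \K^1(X) \) with that of \( H^3(X;\Z) \). I do not expect any genuine obstacle along the way; the only point requiring care is to enumerate the possible differentials and confirm that each vanishes for either degree reasons or dimension reasons.
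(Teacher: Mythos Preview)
Your argument is correct and follows essentially the same route as the paper: both use the Atiyah-Hirzebruch spectral sequence, observe that the relevant differentials vanish (the paper packages this as the collapse implied by Lemma~\ref{lem:AHSS_d3} and the description of $d_3$ as $\beta\circ\Sq^2\circ\pi$), and then invoke the torsion-freeness of $H^1(X;\Z)$. The only cosmetic difference is that the paper uses freeness of $H^1(X;\Z)$ to actually split the extension as $\K^1(X)\cong H^1(X;\Z)\oplus H^3(X;\Z)$, whereas you stop at the short exact sequence and extract the torsion directly; since $H^1$ is finitely generated, torsion-free and free coincide, so this is the same observation. One tiny phrasing slip: in your last paragraph the map goes $A\hookrightarrow B$, not $B\to A$, so you should say the inclusion identifies $A_{\mathrm{tors}}$ with $B_{\mathrm{tors}}$.
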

\begin{proof}
By Lemma~\ref{lem:AHSS_d3}, the Atiyah-Hirzebruch spectral sequence for the K-theory of \( X \) collapses. Since the first integral cohomology group \( H^1(X;\Z) \) of a smooth complex curve or surface is free, we find that \( \K^1(X)\cong H^1(X;\Z)\oplus H^3(X;\Z) \), and the lemma follows.
\end{proof}
\begin{remark*}
Conversely, if we assume the analogue of the Quillen-Lichtenbaum conjecture for hermitian K-theory, \ie if we assume that the hermitian comparison maps with \( \Z/2 \)-coefficients are isomorphisms in high degrees, then we can recover our comparison theorem for Witt groups for all surfaces \( X \) with \( \K^1(X)[2]=0 \). Said analogue appeared recently in \cite{BKOS:QuillenLichtenbaum}. However, it does not seem possible to relate our result to the Quillen-Lichtenbaum conjecture for surfaces with \( 2\)-torsion in \(\K^1(X)\). Such surfaces do exist. In particular, if \( X \) is an Enriques surface, then \( \Pic(X) \) surjects onto \( H^2(X;\Z) \) but \( \K^1(X)[2]\cong\pi_1(X)\cong\Z/2 \) \cite{Beauville}*{page~90}.
\end{remark*}

\subsection*{Acknowledgements}
The results presented here stem from the author's PhD studies, which were generously funded by the Engineering and Physical Sciences Research Council (EPSRC). Part of this work was moreover supported by a research grant of the Cambridge Philosophical Society. I warmly thank my supervisor Burt Totaro for his continuing support, and for encouraging me to put this part of my research down on paper. Jens Hornbostel has supplied helpful comments on an earlier version of this paper.

\encouragepagebreak{\subsectionair}
\begin{bibdiv}
\begin{biblist}

\bib{AndreottiFrankel}{article}{
  author={Andreotti, Aldo},
  author={Frankel, Theodore},
  title={The Lefschetz theorem on hyperplane sections},
  journal={Ann. of Math. (2)},
  volume={69},
  date={1959},
  pages={713--717},
}

\bib{Arason}{article}{
  author={Arason, J{\'o}n Kristinn},
  title={Der Wittring projektiver R\"aume},
  journal={Math. Ann.},
  volume={253},
  date={1980},
  number={3},
  pages={205--212},
  issn={0025-5831},
}

\bib{Auel:MilnorRemarks}{article}{
  author={Auel, Asher},
  title={Remarks on the Milnor conjecture over schemes},
  date={2011},
  note={\tt arXiv:1109.3294},
}
\bib{Auel:Clifford-invariant}{article}{
  author={Auel, Asher},
  title={Surjectivity of the total Clifford invariant and Brauer dimension},
  date={2011},
  note={\tt arXiv:1108.5728},
}

\bib{Balmer:TWGI}{article}{
  author={Balmer, Paul},
  title={Triangular Witt groups. I. The 12-term localization exact sequence},
  journal={$K$-Theory},
  volume={19},
  date={2000},
  number={4},
  pages={311--363},
  issn={0920-3036},
}

\bib{Balmer:TWGII}{article}{
  author={Balmer, Paul},
  title={Triangular Witt groups. II. From usual to derived},
  journal={Math. Z.},
  volume={236},
  date={2001},
  number={2},
  pages={351--382},
  issn={0025-5874},
}

\bib{Balmer:Handbook}{article}{
  author={Balmer, Paul},
  title={Witt groups},
  conference={ title={Handbook of K-theory. Vol. 1, 2}, },
  book={ publisher={Springer}, place={Berlin}, },
  date={2005},
  pages={539--576},
}

\bib{BalmerWalter:GWSS}{article}{
  author={Balmer, Paul},
  author={Walter, Charles},
  title={A Gersten-Witt spectral sequence for regular schemes},
  journal={Ann. Sci. \'Ecole Norm. Sup. (4)},
  volume={35},
  date={2002},
  number={1},
  pages={127--152},
  issn={0012-9593},
}

\bib{BGPW:Gersten}{article}{
  author={Balmer, Paul},
  author={Gille, Stefan},
  author={Panin, Ivan},
  author={Walter, Charles},
  title={The Gersten conjecture for Witt groups in the equicharacteristic case},
  journal={Doc. Math.},
  volume={7},
  date={2002},
  pages={203--217 (electronic)},
}

\bib{Beauville}{book}{
  author={Beauville, Arnaud},
  title={Complex algebraic surfaces},
  series={London Mathematical Society Student Texts},
  volume={34},
  edition={2},
  publisher={Cambridge University Press},
  place={Cambridge},
  date={1996},
}

\bib{BKOS:QuillenLichtenbaum}{article}{
  author={Berrick, Jon},
  author={Karoubi, Max},
  author={{\O }stv{\ae }r, Paul Arne},
  author={Schlichting, Marco},
  title={The homotopy fixed point theorem and the Quillen-Lichtenbaum conjecture in hermitian K-theory},
  date={2011},
  note={\tt arxiv:1011.4977v2},
}

\bib{BlochOgus}{article}{
  author={Bloch, Spencer},
  author={Ogus, Arthur},
  title={Gersten's conjecture and the homology of schemes},
  journal={Ann. Sci. {\'E}cole Norm. Sup. (4)},
  volume={7},
  date={1974},
  pages={181--201 (1975)},
}

\bib{Brosnan:Steenrod}{article}{
  author={Brosnan, Patrick},
  title={Steenrod operations in Chow theory},
  journal={Trans. Amer. Math. Soc.},
  volume={355},
  date={2003},
  number={5},
  pages={1869--1903},
  issn={0002-9947},
}

\bib{Delzant}{article}{
  author={Delzant, Antoine},
  title={D\'efinition des classes de Stiefel-Whitney d'un module quadratique sur un corps de caract\'eristique diff\'erente de $2$},
  journal={C. R. Acad. Sci. Paris},
  volume={255},
  date={1962},
}

\bib{ElmanWadsworth}{article}{
   author={Elman, Richard},
   author={Wadsworth, Adrian R.},
   title={Hereditarily quadratically closed fields},
   journal={J. Algebra},
   volume={111},
   date={1987},
   number={2},
   pages={475--482},
}

\bib{EKV}{article}{
  author={Esnault, H{\'e}l{\`e}ne},
  author={Kahn, Bruno},
  author={Viehweg, Eckart},
  title={Coverings with odd ramification and Stiefel-Whitney classes},
  journal={J. Reine Angew. Math.},
  volume={441},
  date={1993},
  pages={145--188},
}
\bib{Fasel:IntersectionFormula}{article}{
   author={Fasel, Jean},
   title={The excess intersection formula for Grothendieck-Witt groups},
   journal={Manuscripta Math.},
   volume={130},
   date={2009},
   number={4},
   pages={411--423},
   issn={0025-2611},
}
\bib{FaselSrinivas:Chow-Witt}{article}{
   author={Fasel, Jean},
   author={Srinivas, Vasudevan},
   title={Chow-Witt groups and Grothendieck-Witt groups of regular schemes},
   journal={Adv. Math.},
   volume={221},
   date={2009},
   number={1},
   pages={302--329},
   issn={0001-8708},
   %review={\MR{2509328 (2010g:13011)}},
   %doi={10.1016/j.aim.2008.12.005},
}
\bib{Fujii:P}{article}{
  author={Fujii, Michikazu},
  title={$K\sb {0}$-groups of projective spaces},
  journal={Osaka J. Math.},
  volume={4},
  date={1967},
  pages={141--149},
  issn={0030-6126},
}

\bib{Fulton:Intersection}{book}{
  author={Fulton, William},
  title={Intersection theory},
  series={Ergebnisse der Mathematik und ihrer Grenzgebiete. 3.~Folge}%. A Series of Modern Surveys in Mathematics},
  volume={2},
  edition={2},
  publisher={Springer-Verlag},
  place={Berlin},
  date={1998},
}

\bib{Fernandez}{article}{
  author={Fern{\'a}ndez-Carmena, Fernando},
  title={The Witt group of a smooth complex surface},
  journal={Math. Ann.},
  volume={277},
  date={1987},
  number={3},
  pages={469--481},
}

\bib{GelfandManin}{book}{
  author={Gelfand, Sergei},
  author={Manin, Yuri},
  title={Methods of homological algebra},
  series={Springer Monographs in Mathematics},
  edition={2},
  publisher={Springer-Verlag},
  place={Berlin},
  date={2003},
  pages={xx+372},
}

\bib{Gille:gradedGW}{article}{
   author={Gille, Stefan},
   title={A graded Gersten-Witt complex for schemes with a dualizing complex
   and the Chow group},
   journal={J. Pure Appl. Algebra},
   volume={208},
   date={2007},
   number={2},
   pages={391--419},
}

\bib{Hartshorne:AmpleSub}{book}{
  author={Hartshorne, Robin},
  title={Ample subvarieties of algebraic varieties},
  series={Notes written in collaboration with C. Musili. Lecture Notes in Mathematics, Vol. 156},
  publisher={Springer-Verlag},
  place={Berlin},
  date={1970},
  pages={xiv+256},
}

\bib{Hartshorne}{book}{
  author={Hartshorne, Robin},
  title={Algebraic geometry},
  note={Graduate Texts in Mathematics, No. 52},
  publisher={Springer-Verlag},
  place={New York},
  date={1977},
  pages={xvi+496},
}

\bib{HJJS:BasicBundleTheory}{book}{
  author={Husem{\"o}ller, Dale},
  author={Joachim, Michael},
  author={Jur{\v {c}}o, Branislav},
  author={Schottenloher, Martin},
  title={Basic bundle theory and $K$-cohomology invariants},
  series={Lecture Notes in Physics},
  volume={726},
  publisher={Springer},
  place={Berlin},
  date={2008},
}
\bib{Knebusch:curves}{article}{
   author={Knebusch, Manfred},
   title={On algebraic curves over real closed fields. II},
   journal={Math. Z.},
   volume={151},
   date={1976},
   number={2},
   pages={189--205},
   issn={0025-5874},
   %review={\MR{0441979 (56 \#368)}},
}
\bib{Knebusch:varieties}{article}{
  author={Knebusch, Manfred},
  title={Symmetric bilinear forms over algebraic varieties},
  book={ publisher={Queen's Univ.}, place={Kingston, Ont.}, },
  date={1977},
  pages={103--283. Queen's Papers in Pure and Appl. Math., No. 46},
  note={Also available from \url {epub.uni-regensburg.de/12783/}},
}

\bib{Kollar}{book}{
  author={Koll{\'a}r, J{\'a}nos},
  title={Lectures on resolution of singularities},
  series={Annals of Mathematics Studies},
  volume={166},
  publisher={Princeton University Press},
  place={Princeton, NJ},
  date={2007},
  pages={vi+208},
}

\bib{Laborde}{article}{
  author={Laborde, Olivier},
  title={Classes de Stiefel-Whitney en cohomologie \'etale},
  note={Colloque sur les Formes Quadratiques (Montpellier, 1975)},
  journal={Bull. Soc. Math. France Suppl. Mem.},
  number={48},
  date={1976},
  pages={47--51},
}

\bib{Lazarsfeld}{book}{
  author={Lazarsfeld, Robert},
  title={Positivity in algebraic geometry. I},
  series={Ergebnisse der Mathematik und ihrer Grenzgebiete. 3.~Folge.}% A Series of Modern Surveys in Mathematics },
  volume={48},
  note={Classical setting: line bundles and linear series},
  publisher={Springer-Verlag},
  place={Berlin},
  date={2004},
  pages={xviii+387},
}

\bib{Levine:BlochLichtenbaum}{article}{
  author={Levine, Marc},
  title={K-theory and motivic cohomology of schemes},
  note={Preprint},
  eprint={www.math.uiuc.edu/K-theory/0336/},
  date={1999},
}
\bib{Levine:Slice-and-GW}{article}{
   author={Levine, Marc},
   title={The slice filtration and Grothendieck-Witt groups},
   journal={Pure Appl. Math. Q.},
   volume={7},
   date={2011},
   number={4, Special Issue: In memory of Eckart Viehweg},
   pages={1543--1584},
   issn={1558-8599},
   %review={\MR{2918174}},
}
\bib{May:Additivity}{article}{
  author={May, J. Peter},
  title={The additivity of traces in triangulated categories},
  journal={Adv. Math.},
  volume={163},
  date={2001},
  number={1},
  pages={34--73},
}
\bib{Merkurjev:MC}{article}{
  author={Merkurjev, Alexander},
  title={On the norm residue symbol of degree $2$},
  journal={Dokl. Akad. Nauk SSSR},
  volume={261},
  date={1981},
  number={3},
  pages={542--547},
}

\bib{Milne:LEC}{article}{
  author={Milne, James},
  title={Lectures on Etale Cohomology (v2.10)},
  year={2008},
  eprint={www.jmilne.org/math/},
}

\bib{Milnor:Conjectures}{article}{
  author={Milnor, John},
  title={Algebraic $K$-theory and quadratic forms},
  journal={Invent. Math.},
  volume={9},
  date={1969/1970},
  pages={318--344},
}

\bib{MilnorStasheff}{book}{
  author={Milnor, John},
  author={Stasheff, James},
  title={Characteristic classes},
  note={Annals of Mathematics Studies, No. 76},
  publisher={Princeton University Press},
  place={Princeton, N. J.},
  date={1974},
  pages={vii+331},
}

\bib{OVV:MilnorConjecture}{article}{
  author={Orlov, Dmitri},
  author={Vishik, Alexander},
  author={Voevodsky, Vladimir},
  title={An exact sequence for $K^M_\ast /2$ with applications to quadratic forms},
  journal={Ann. of Math. (2)},
  volume={165},
  date={2007},
  number={1},
  pages={1--13},
}

\bib{Pardon}{article}{
  author={Pardon, William},
  title={The filtered Gersten-Witt resolution for regular schemes},
  note={Preprint},
  eprint={www.math.uiuc.edu/K-theory/0419/},
  date={2004},
}
\bib{Parimala:curves-local}{article}{
   author={Parimala, Raman},
   title={Witt groups of curves over local fields},
   journal={Comm. Algebra},
   volume={17},
   date={1989},
   number={11},
   pages={2857--2863},
   issn={0092-7872},
   %review={\MR{1025614 (91b:11052)}},
   %doi={10.1080/00927878908823881},
}
\bib{PedriniWeibel:Surfaces}{article}{
  author={Pedrini, Claudio},
  author={Weibel, Charles},
  title={The higher $K$-theory of a complex surface},
  journal={Compositio Math.},
  volume={129},
  date={2001},
  number={3},
  pages={239--271},
  issn={0010-437X},
}
\bib{Schlichting:MayerVietoris}{article}{
   author={Schlichting, Marco},
   title={The Mayer-Vietoris principle for Grothendieck-Witt groups of
   schemes},
   journal={Invent. Math.},
   volume={179},
   date={2010},
   number={2},
   pages={349--433},
   issn={0020-9910},
   %review={\MR{2570120 (2011m:19005)}},
   %doi={10.1007/s00222-009-0219-1},
}
\bib{Schlichting:GWnotes}{article}{
   author={Schlichting, Marco},
   title={Hermitian K-theory, derived equivalences and Karoubi's fundamental theorem},
   note={unpublished draft},
   date={2007},
   %eprint={www.math.lsu.edu/~mschlich/research/draftGWsch2.pdf},
}
\bib{Sujatha:RPS}{article}{
   author={Sujatha, Ramdorai},
   title={Witt groups of real projective surfaces},
   journal={Math. Ann.},
   volume={288},
   date={1990},
   number={1},
   pages={89--101},
   issn={0025-5831},
   %review={\MR{1070926 (91j:14016)}},
   %doi={10.1007/BF01444523},
}
\bib{Suslin:K-and-MC}{article}{
  author={Suslin, Andrei},
  title={Algebraic $K$-theory and motivic cohomology},
  conference={ title={ 2}, address={Z\"urich}, date={1994}, },
  book={ publisher={Birkh\"auser}, place={Basel}, },
  date={1995},
}

\bib{Totaro:Witt}{article}{
  author={Totaro, Burt},
  title={Non-injectivity of the map from the Witt group of a variety to the Witt group of its function field},
  journal={J. Inst. Math. Jussieu},
  volume={2},
  date={2003},
  number={3},
  pages={483--493},
  issn={1474-7480},
}

\bib{Voevodsky:RPO}{article}{
  author={Voevodsky, Vladimir},
  title={Reduced power operations in motivic cohomology},
  journal={Publ. Math. Inst. Hautes {\'E}tudes Sci.},
  number={98},
  date={2003},
  pages={1--57},
  issn={0073-8301},
}

\bib{Voevodsky:MC-with-2}{article}{
  author={Voevodsky, Vladimir},
  title={Motivic cohomology with ${\bf Z}/2$-coefficients},
  journal={Publ. Math. Inst. Hautes {\'E}tudes Sci.},
  number={98},
  date={2003},
  pages={59--104},
}

\bib{Walter:TGW}{article}{
  author={Walter, Charles},
  title={Grothendieck-Witt groups of triangulated categories},
  note={Preprint},
  eprint={www.math.uiuc.edu/K-theory/0643/},
  date={2003},
}

\bib{Walter:PB}{article}{
  author={Walter, Charles},
  title={Grothendieck-Witt groups of projective bundles},
  note={Preprint},
  eprint={www.math.uiuc.edu/K-theory/0644/},
  date={2003},
}

\bib{Me:WCCV}{article}{
  author={Zibrowius, Marcus},
  title={Witt groups of complex cellular varieties},
  journal={Documenta Math.},
  number={16},
  date={2011},
  pages={465--511},
}

\bib{Me:Thesis}{thesis}{
  author={Zibrowius, Marcus},
  title={Witt Groups of Complex Varieties},
  type={PhD Thesis},
  organization={University of Cambridge},
  date={2011},
  note={Available at \url{www.dspace.cam.ac.uk/handle/1810/239413}.},
}

\end{biblist}
\end{bibdiv}

\end{document}